\date{}
\newtheorem{thm}{Th\'eor\`eme}[section]
\newtheorem*{thm*}{Th\'eor\`eme}
\newtheorem{conj-chap}[thm]{Conjecture}
\newtheorem{defn}[thm]{D\'efinition}
\newtheorem{prop}[thm]{Proposition}
\newtheorem{notation}[thm]{Notation}
\newtheorem{i-question}{Question}
\newtheorem*{question*}{Question}
\newtheorem{e-thm}[thm]{Theorem}
\newtheorem{e-example}[thm]{Example}
\newtheorem{e-defn}[thm]{Definition}
\newtheorem{e-rem}[thm]{Remark}
\newtheorem{e-lem}[thm]{Lemma}
\newtheorem{e-cor}[thm]{Corollary}
\newenvironment{e-proof}[1][\sc Proof.]{\begin{trivlist}
\item[\hskip \labelsep {\bfseries #1}]}{\hfill{$\square$}\end{trivlist}}
\newcommand{\Coprod}{\displaystyle\coprod}
\newcommand{\Prod}{\displaystyle\prod}
\newcommand{\fonc}[5]{
 \begin{array}{cccc}
 #1: & #2 & \longrightarrow & #3\\
     & #4 & \longmapsto & #5
 \end{array}
}
\newcommand{\appl}[4]{
 \begin{array}{cccc}
  #1 & \longrightarrow & #2\\
  #3 & \longmapsto & #4
 \end{array}
}
\begin{document}

\title{On the Weil-étale cohomology of number fields}
\author{Baptiste Morin}
\maketitle

\begin{abstract}
We give a direct description of the category of sheaves on
Lichtenbaum's Weil-étale site of a number ring. Then we apply this
result to define a spectral sequence relating Weil-étale cohomology
to Artin-Verdier étale cohomology. Finally we construct complexes of
étale sheaves computing the expected Weil-étale cohomology.
\end{abstract}

\footnotetext{ \emph{2000 Mathematics subject classification} :
14F20 (primary) 14G10 (secondary). \emph{Keywords} : étale
cohomology, Weil-étale cohomology, topos, Dedekind zeta function.}

\section{Introduction}
Stephen Lichtenbaum has conjectured in \cite{Lichtenbaum} the
existence of a Weil-étale topology for arithmetic schemes. The
associated cohomology groups with coefficients in motivic complexes
of sheaves should be finitely generated and closely related to
special values of zeta-functions. For example, Lichtenbaum predicts
that the Weil-étale cohomology groups with compact support
$H_{Wc}^i(Y;\mathbb{Z})$ exist, are finitely generated and vanish
for $i$ large, where $Y$ is a scheme of finite type over $Spec\,
\mathbb{Z}$. The order of annulation and the special value of the
zeta function $\zeta_Y(s)$ at $s=0$ should be given by
$$\mbox{ord}_{s=0}\,\zeta_Y(s)=\chi'_c(Y,\mathbb{Z})\mbox{ and }\zeta^*_Y(0)=\pm\chi_c(Y,\mathbb{Z}),$$
where $\chi_c(Y,\mathbb{Z})$ and $\chi'_c(Y,\mathbb{Z})$ are the
Euler characteristics defined in \cite{Lichtenbaum}. Lichtenbaum has
also defined a candidate for the Weil-étale cohomology when
$Y=Spec\,\mathcal{O}_K$, the spectrum of a number ring. Assuming
that the groups $H_W^i(\bar{Y};\mathbb{Z})$ vanish for $i\geq4$, he
has proven his conjecture in this case. However, Matthias Flach has
shown in \cite{MatFlach} that the groups $H_W^i(\bar{Y};\mathbb{Z})$
defined in \cite{Lichtenbaum} are in fact infinitely generated for
any even integer $i\geq4$. The aim of the present work is to study
in more details Lichtenbaum's definition and its relation to
Artin-Verdier étale cohomology.

Let $K$ be a number field and let $\bar{Y}$ be the Arakelov
compactification of $Spec\,\mathcal{O}_K$. In the second section we
define a topos $\mathfrak{F}_{_{L/K,S}}$, said to be flask, using the Weil group
$W_{L/K,S}$ associated to a finite Galois extension $L/K$ and a
finite set $S$ of places of $K$ containing the archimedean ones and
the places which ramify in $L$. We also define a topos
$\mathfrak{F}_{_{W,\bar{Y}}}$ using the full Weil-group $W_K$.
The first main result of this paper shows that the topos
$\mathfrak{F}_{_{L/K,S}}$ is canonically equivalent to the category
of sheaves on the Lichtenbaum Weil-étale site $T_{L/K,S}$. This
gives a simple description of the categories of sheaves
on those Weil-étale sites. In the spirit of \cite{SGA4}, it is sometimes
easier to work directly with these flask topoi rather than with
their generating sites $T_{L/K,S}$. Finally, this exhibits the somewhat unexpected
behavior of these categories of sheaves.

In the third section we compute the groups
$H_W^i(\bar{Y};\mathbb{Z}):=\underrightarrow{lim}\,H^i(\mathfrak{F}_{_{L/K,S}},\mathbb{Z})$
and $H^i(\mathfrak{F}_{_{W,\bar{Y}}},\mathbb{Z})$. Then we observe that the canonical map
$\underrightarrow{lim}\,H^i(\mathfrak{F}_{_{L/K,S}},\mathbb{Z})\rightarrow
H^i(\mathfrak{F}_{_{W,\bar{Y}}},\mathbb{Z})$ is not an isomorphism
for $i=2,3$. This points out that the current Weil-étale cohomology is not
defined as the cohomology of a site (i.e. of a topos).

In the seventh section we study the relation between the flask topoi
and the Artin-Verdier étale topos. This is then applied to define a
spectral sequence relating Weil-étale cohomology to étale
cohomology. The last section is devoted to the construction of
complexes of étale sheaves on
$\bar{Y}=\overline{Spec\,\mathcal{O}_K}$, where $K$ is a totally
imaginary number field. The étale hypercohomology of these complexes
yields the expected Weil-étale cohomology with and without compact
support. This last result was
suggested by a question of Matthias Flach. The existence of these complexes is a
necessary condition for the existence of a Weil-étale topos (i.e. a
topos whose cohomology is the conjectural Weil-étale cohomology)
over the Artin-Verdier étale topos.

\section{Notation}

Let $K$ be a number field and let $\bar{K}/K$ be an algebraic
closure of $K$. We denote by $Y$ the spectrum of the ring of
integers $\mathcal{O}_K$ of $K$. Following Lichtenbaum's
terminology, we call $\bar{Y}=(Y;Y_{\infty})$ the set of all
valuations of $K$, where $Y_{\infty}$ is the set of archimedean
valuations of $K$. This set $\bar{Y}$ is endowed with the Zariski
topology. The trivial valuation $v_0$ of $K$ corresponds to the
generic point of $Y$. We denote by $\bar{Y}^0$ the set of closed
points of $\bar{Y}$ (i.e. the set of non-trivial valuations of $K$).

\subsection{The global Weil group.}

Let $\bar{K}/L/K$ be a finite Galois extension of the number field
$K$. Let $S$ be a finite set of places of $K$ containing the
archimedean ones and the places which ramify in $L$. We denote by
$I_L$ and $C_L$ the idèle group and the idèle class group of $L$
respectively. Let $U_{L,S}$ be the subgroup of $I_L$ consisting of
those idèles which are $1$ at valuations lying over $S$, and units
at valuations not lying over $S$. It is well known that $U_{L,S}$ is
a cohomologically trivial $G(L/K)$-module. The natural map
$U_{L,S}\rightarrow C_L$ is injective and the $S$-idèle class group
$C_{L,S}$ is defined by $C_{L,S}=C_L/U_{L,S}$, as a topological
group. For any $i\in\mathbb{Z}$, the map
$$\widehat{H}^i(G(L/K),C_L)\longrightarrow\widehat{H}^i(G(L/K),C_{L,S})$$
is an isomorphism since $U_{L,S}$ is cohomologically trivial. By
class field theory, there exists a canonical class in
$\widehat{H}^2(G(L/K),C_{L,S})$ which yields a group extension
$$0\rightarrow C_{L,S}\rightarrow W_{L/K,S}\rightarrow G(L/K)\rightarrow 0.$$
If we assume that $S$ is the set of all non-trivial valuations of
$K$, then $W_{L/K,S}$ is the relative Weil group $W_{L/K}$. By
\cite{Lichtenbaum} Lemma 3.1, the global Weil group is the
projective limit
$$W_K=\underleftarrow{lim}\,\,W_{L/K,S},$$
over finite Galois $\bar{K}/L/K$ and finite $S$ as above.
\subsection{Galois groups and Weil groups.}
\subsubsection{}
For any valuation $v$ of $K$, we choose a valuation $\bar{v}$ of
$\bar{K}$ lying over $v$ and we denote by $D_{v}$ the associated
decomposition group and by $I_{v}$ the inertia group. We set
$$K_v^h:=\bar{K}^{D_{v}},\,\,\,K_v^{sh}:=\bar{K}^{I_{v}}\mbox{ and }G_{k(v)}:=Gal(K_v^{sh}/K_v^h)=D_v/I_v.$$
If $v\in Y$, then $k(v)$ is the residue field of the scheme $Y$ at
$v$. For any archimedean valuation $v$, the Galois group
$G_{k(v)}=\{1\}$ is trivial since $D_{v}=I_{v}$. Note that for the
trivial valuation $v=v_0$, one has $D_{v_0}=G_K$ and
$I_{v_0}=\{1\}$, hence $G_{k(v_0)}=G_K$.

Let $K_v$ be the completion of $K$ with respect to the valuation
$v$. Thus for $v=v_0$ the trivial valuation, $K_{v_0}$ is just $K$.
The choice of the valuation $\bar{v}$ of $\bar{K}$ lying over $v$
induces an embedding
$$\mathfrak{o}_v:D_v=G_{K_v}\longrightarrow G_K.$$
We choose a global Weil group $\alpha_{v_0}:W_K\rightarrow G_K$. For
any non-trivial valuation $v$, we choose a local Weil group
$\alpha_{K_v}:W_{K_v}\rightarrow G_{K_v}$ and a Weil map
$\theta_v:W_{K_v}\rightarrow W_{K}$ so that the diagram
 \[ \xymatrix{
 W_{K_v}\ar[r]^{\theta_v}\ar[d]_{\alpha_{K_v}} &W_K\ar[d]_{\alpha_{v_0}}  \\
 G_{K_v}  \ar[r]^{\mathfrak{o}_v}& G_K
} \] is commutative. For any valuation $v$, let
$W_{k(v)}:=W_{K_v}/I_v$ be the Weil group of the residue field at
$v$. Note that $W_{k(v)}$ is isomorphic to $\mathbb{Z}$
(respectively $\mathbb{R}$) as a topological group whenever $v$ is
ultrametric (respectively archimedean). We denote by
$$q_v:W_{K_v}\longrightarrow W_{k(v)}\mbox{ and
}\mathfrak{q}_v:G_{K_v}\longrightarrow G_{k(v)}$$ the canonical
continuous projections. One has $K_{v_0}=K$, $D_{v_0}=G_K$,
$I_{v_0}=\{1\}$, and $W_{k(v_0)}=W_{K_{v_0}}/I_{v_0}=W_K$. We set
$\theta_{v_0}=q_{v_0}=Id_{W_K}$ and
$\mathfrak{o}_{v_0}=\mathfrak{q}_{v_0}=Id_{G_K}$.

\subsubsection{}Let $v$ be a non-trivial valuation of $K$ and let $W_{K_v}\rightarrow
W_K$ be a Weil map. Consider the morphism
$$W_{K_v}\longrightarrow W_K\longrightarrow W_{L/K}=W_K/W_L^c,$$ where $L/K$ is a finite Galois
extension. Here $W_L^c$ is the closure of the commutator subgroup of
$W_L$. The valuation $\bar{v}$ lying over $v$ defines a valuation
$w$ of $L$ and the morphism $W_{K_v}\rightarrow W_{L/K}$ factors
through $W_{K_v}/W_{L_w}^c=W_{L_w/K_v}$. We get the following
commutative diagram
 \[ \xymatrix{
 0\ar[r]&L_w^{\times}\ar[r]\ar[d]& W_{L_w/K_v}\ar[d]\ar[r]& G(L_w/K_v)\ar[d]\ar[r]&0 \\
 0\ar[r]&C_L^{\times}\ar[r]& W_{L/K}\ar[r]& G(L/K)\ar[r]&0
 } \]
where the rows are both exact. The map $W_{L_w/K_v}\rightarrow
W_{L/K}$ is injective and the image of $W_{K_v}$ in $W_{L/K}$ is
isomorphic to $W_{L_w/K_v}$. Let $S$ be a finite set of places of
$K$ containing the archimedean ones and the places which ramify in
$L$. The group $U_{L,S}$ injects in $W_{L/K}$ and there is an
isomorphism $W_{L/K,S}\simeq W_{L/K}/U_{L,S}$. Hence the image of
$W_{K_v}$ in $W_{L/K,S}$ is isomorphic to $W_{L_w/K_v}$ for $v\in
S$. For $v$ not in $S$, the image of $W_{K_v}$ in $W_{L/K,S}$ is
isomorphic to the quotient of $W_{L_w/K_v}$ by
$\mathcal{O}_{L_w}^{\times}$. The canonical map $W_{K_v}\rightarrow
W_{k(v)}$ factors through $W_{L_w/K_v}$ hence through
$W_{L_w/K_v}/\mathcal{O}_{L_w}^{\times}$. We denote by
$\widetilde{W}_{K_v}$ the image of $W_{K_v}$ in $W_{L/K,S}$. For any
trivial valuation $v$ of $K$, the Weil map $W_{K_v}\rightarrow
W_{K}$ and the quotient map $W_{K_v}\rightarrow W_{k(v)}$ induce
morphisms $\theta_v:\widetilde{W}_{K_v}\rightarrow W_{L/K,S}$ and
$q_v:\widetilde{W}_{K_v}\rightarrow W_{k(v)}$ respectively.

\subsection{Left exact sites.} Let $\mathcal{C}$ be a category and let $\mathcal{J}$ be
a Grothendieck topology on $\mathcal{C}$. Recall that a category
$\mathcal{C}$ has finite projective limits if and only if
$\mathcal{C}$ has a final object and fiber products.
\begin{defn}
The site $(\mathcal{C};\mathcal{J})$ is said to be \emph{left exact}
whenever $\mathcal{C}$ has finite projective limits and
$\mathcal{J}$ is subcanonical.
\end{defn}
Note that any Grothendieck topos is equivalent the category of
sheaves of sets on a left exact site (see \cite{SGA4} IV Théorème
1.2).
\begin{defn}
A family of morphisms $\{X_i\rightarrow X;\,i\in I\}$ of the
category $\mathcal{C}$ is said to be a \emph{covering family} of $X$
if the sieve of $X$ generated by this family lies in
$\mathcal{J}(X)$.
\end{defn}
The covering families define a pretopology on $\mathcal{C}$ which
generates the topology $\mathcal{J}$, since $\mathcal{C}$ is left
exact. A \emph{morphism of left exact sites} is a functor
$a:\mathcal{C}\rightarrow\mathcal{C'}$ preserving finite projective
limits (i.e. $a$ is left exact), which is continuous. This means
that the functor
$$\appl{\widehat{\mathcal{C'}}}{\widehat{\mathcal{C}}}{\mathcal{P}}{\mathcal{P}\circ a},$$
sends sheaves to sheaves, where $\widehat{\mathcal{C}}$ is the
category of presheaves on $\mathcal{C}$ (contravariant functors from
$\mathcal{C}$ to the category of sets). We denote by
$\widetilde{(\mathcal{C},\mathcal{J})}$ the topos of sheaves of sets
on the site $(\mathcal{C};\mathcal{J})$. A morphism of left exact
sites
$a:(\mathcal{C},\mathcal{J})\rightarrow(\mathcal{C}',\mathcal{J}')$
induces a morphism of topoi
$\widetilde{a}=(\widetilde{a}^*,\widetilde{a}_*)$ such that the
square
 \[ \xymatrix{
 \widetilde{(\mathcal{C},\mathcal{J})}\ar[r]^{\widetilde{a}^*} & \widetilde{(\mathcal{C}',\mathcal{J}')}  \\
 \mathcal{C}\ar[u]_{}  \ar[r]^{a}& \mathcal{C'}\ar[u]_{}
} \] is commutative, where the vertical arrows are given by Yoneda
embeddings (which are fully faithful since the topologies are
sub-canonical) and $\widetilde{a}^*$ is the inverse image of
$\widetilde{a}$. We denote by $Et_X$ the small étale site of a
scheme $X$. The étale topos of $X$ (i.e. the category of sheaves of
sets on $Et_X$) is denoted by $X_{et}$. A morphism of schemes
$u:X\rightarrow Y$ gives rise to a morphism of left exact sites
$$\fonc{u^*}{Et_Y}{Et_X}{(U\rightarrow
Y)}{(U\times_YX\rightarrow X)},$$ hence to a morphism of topoi
$(u^*;u_*): X_{et}\rightarrow Y_{et}$. A diagram of topoi
\[ \xymatrix{
  \mathcal{S}_1\ar[d]_{b} \ar[r]^{a} & \mathcal{S}_2\ar[d]_{d}   \\
  \mathcal{S}_3  \ar[r]^{c}& \mathcal{S}_4
} \] is said to be \emph{commutative} if there is
 a canonical isomorphism of morphisms of topoi
$c\circ b\simeq d\circ a$, or in other words, an isomorphism in the
category $\underline{Homtop}\,(\mathcal{S}_1;\mathcal{S}_4)$ between
the objects $c\circ b$ and $d\circ a$. Strictly speaking, such a
diagram is only pseudo-commutative. In what follows, a topos is
always a Grothendiek topos and a morphism is a geometric morphism.

\subsection{The classifying topos of a topological group.}
Let $G$ be a topological group. The \emph{small classifying topos}
$B^{sm}_G$ is the category of sets on which $G$ acts continuously.
If $G$ is discrete or profinite (or more generally totally
disconnected) then the cohomology of the topos $B^{sm}_G$ is
precisely the cohomology of the group $G$.

For $G$ any topological group, we denote by $B_{Top}G$ the category
of $G$-topological spaces (which are elements of a given universe)
endowed with the local-section topology $\mathcal{J}_{ls}$ (see
\cite{Lichtenbaum} section 1), and $B_G$ is the topos of sheaves of
sets on this site.

Alternatively, let $Top$ be the category of topological spaces
(which are elements of a given universe) endowed with the open cover
topology $\mathcal{J}_{open}$. Recall that the open cover topology
is generated by the pre-topology for which a family of continuous
maps $\{U_i\rightarrow U\}$ is a cover when it is an open cover in
the usual sense. By (\cite{MatFlach} Lemma 1), one has
$\mathcal{J}_{ls}=\mathcal{J}_{open}$ on the category $Top$. We
denote by $\mathcal{T}$ the topos of sheaves of sets on the site
$(Top,\mathcal{J}_{open})$. Since the Yoneda embedding commutes with
projective limits, a topological group $G$ defines a group-object
$y(G)$ of $\mathcal{T}$. The \emph{classifying topos} $B_G$ of the
topological group $G$ is the topos of $y(G)$-objects of
$\mathcal{T}$. Recall that the data of an object $\mathcal{F}$ of
$\mathcal{T}$ is equivalent to the following. For any topological
space $X$, a sheaf $F_X$ on $X$ (i.e. an étalé space over $X$), and
for any continuous map $u:X'\rightarrow X$ a morphism
$\varphi_u:u^*F_X\rightarrow F_X'$ satisfying the natural
transitivity condition for a composition $v\circ u:X''\rightarrow
X'\rightarrow X$. Moreover, $\varphi_u$ is an isomorphism whenever
$u$ is an open immersion or more generally an étalement. This gives
a description of the topoi $\mathcal{T}$ and $B_G$. By
\cite{MatFlach} Corollary 2, the two preceding definitions of $B_G$
are equivalent. In other words, $(B_{Top}G;\mathcal{J}_{ls})$ is a
site for the classifying topos $B_G$.

\subsection{Cohomology of the Weil group.}
Let $\mathcal{E}$ be a topos. There is a unique morphism
$u:\mathcal{E}\rightarrow\underline{Set}$. The left exact functor
$\Gamma_{\mathcal{E}}:=u_*=Hom_{\mathcal{E}}(e_{\mathcal{E}},-)$ is
called the \emph{global sections functor}. Here $e_{\mathcal{E}}$
denotes the final object of $\mathcal{E}$. For any abelian object
$\mathcal{A}$ of $\mathcal{E}$, one has
$$H^i(\mathcal{E},\mathcal{A}):=R^i(\Gamma_{\mathcal{E}})(\mathcal{A}).$$
For any topological group $G$ and any abelian object of $B_G$ (in
particular a topological $G$-module), the cohomology of $G$ is
defined by (see \cite{MatFlach})
$$H^i(G,\mathcal{A}):=H^i(B_G,\mathcal{A}).$$
The following result is due to Stephen Lichtenbaum for $i\leq3$ and
to Matthias Flach for $i>3$. Denote by
$A^{\mathcal{D}}:=Hom_{cont}(A,\mathbb{R}/\mathbb{Z})$ the
Pontryagin dual of a locally compact abelian group $A$. The kernel
of the absolute value map $C_K\rightarrow\mathbb{R}_+^{\times}$ is
denoted by $C_K^1$.
\begin{e-thm}
Let $K$ be a totally imaginary number field and let $\mathbb{Z}$ be
the discrete $W_K$-module with trivial action. Then
\begin{eqnarray*}
H^i(W_K;\mathbb{Z})&=&\mathbb{Z} \mbox{ for }i=0,\\
                           &=& (C_K^1)^{\mathcal{D}}\mbox{ for }i=2,\\
                           &=& 0\mbox{ for i odd},
                           \end{eqnarray*}
and $H^i(W_K;\mathbb{Z})$ is an abelian group of infinite rank, in
particular nonzero, for even $i\geq 4$.
\end{e-thm}

\section{The flask topoi associated to a number field}

\subsection{Definition of the flask topoi.}
Let $L/K$ be an algebraic extension and let $S$ be a set of
non-trivial valuations of the number field $K$ containing all the
valuations of $F$ which ramify in $K$ and the archimedean ones. In
what follows, either $L/K$ is a finite Galois extension and $S$ is a
finite set, or $L=\bar{K}/K$ is an algebraic closure of $K$ and $S$
is the set of all non-trivial valuations of $K$.  Recall that
$\widetilde{W}_{K_v}$ denotes the image of ${W}_{K_v}$ in
$W_{L/K,S}$. The chosen Weil map and the quotient map induce
continuous morphisms
$$\theta_v:\widetilde{W}_{K_v}\rightarrow W_{L/K,S} \mbox{ and }q_v:\widetilde{W}_{K_v}\rightarrow W_{k(v)},$$
for any valuation $v$ of $K$. For the trivial valuation $v_0$, the
maps $\theta_{v_0}$ and $q_{v_0}$ are just $Id_{W_{L/K,S}}$.

\begin{defn}
We define a category $\mathfrak{F}_{_{L/K,S}}$ as follows. The
objects of this category are of the form
$\mathcal{F}=(F_v;f_v)_{v\in \bar{Y}}$, where $F_v$ is an object of
$B_{W_{k(v)}}$ for $v\neq v_0$ (respectively of $B_{W_{L/K,S}}$ for
$v=v_0$) and
$$f_v:q_v^*(F_v)\longrightarrow \theta_v^*(F_{v_0})$$ is a morphism of
$B_{\widetilde{W}_{K_v}}$ so that $f_{v_0}=Id_{F_{v_0}}$. A morphism
$\phi$ from $\mathcal{F}=(F_v;f_v)_{v\in\bar{Y}}$ to
$\mathcal{F'}=(F'_v;f'_v)_{v\in\bar{Y}}$ is family of morphisms
$\phi_v:F_v\rightarrow F'_v\in Fl(B_{W_{k(v)}})$ (and $\phi_{v_0}\in
Fl(B_{W_{L/K,S}})$) so that
 \[ \xymatrix{
  q_v^*(F_v)\ar[d]_{f_v} \ar[r]^{q_v^*(\phi_v)} & q_v^*(F'_v)\ar[d]_{f'_v}   \\
  \theta_v^*(F_{v_0})  \ar[r]^{\theta_v^*(\phi_{v_0})}& \theta_v^*(F'_{v_0})
} \] is a commutative diagram of $B_{\widetilde{W}_{K_v}}$. In what
follows, $F_v$ (respectively $\phi_v$) is called the
\emph{$v$-component} of the object $\mathcal{F}$ (respectively of
the morphism $\phi$).

For $L=\bar{K}$ and $S$ the set of all non-trivial valuations of
$K$, one has $W_{L/K,S}=W_K$, $\widetilde{W}_{K_v}={W}_{K_v}$ and we
set
$$\mathfrak{F}_{_{L/K,S}}=\mathfrak{F}_{_{W;\bar{Y}}}.$$
\end{defn}

The aim of this section is to prove that the category
$\mathfrak{F}_{_{L/K,S}}$ is a Grothendieck topos.
\begin{prop}\label{limitfinies+colim-dans-flask-component-wise}
Arbitrary inductive and finite projective limits exist in
$\mathfrak{F}_{_{L/K,S}}$, and are calculated componentwise.
\end{prop}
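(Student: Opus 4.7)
The approach is to build the (co)limit of a diagram $(\mathcal{F}_i)_{i\in I}$ componentwise, i.e.\ to take as candidate colimit $\mathcal{F} := (\varinjlim_i F_{v,i},\, \varinjlim_i f_{v,i})_{v\in\bar Y}$ (with the obvious analogue for finite projective limits), and then to check that this formula defines an object of $\mathfrak{F}_{_{L/K,S}}$ which satisfies the required universal property.

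The key observation is that each $\theta_v : \widetilde{W}_{K_v}\to W_{L/K,S}$ and $q_v : \widetilde{W}_{K_v}\to W_{k(v)}$ is a continuous homomorphism of topological groups, hence (via the $B$-construction) induces a geometric morphism between the associated classifying topoi. Consequently $\theta_v^*$ and $q_v^*$ are inverse image functors: they preserve arbitrary colimits (being left adjoints) and finite limits (being left exact). Since $B_{W_{k(v)}}$ and $B_{W_{L/K,S}}$ are Grothendieck topoi, they are both cocomplete and finitely complete, so the componentwise (co)limits $F_v$ exist. One then defines the transition morphism $f_v$ as the (co)limit of the system $(f_{v,i})_i$; this makes sense because $q_v^*$ and $\theta_v^*$ commute with the (co)limits in question, so that both the source $q_v^*(F_v)$ and target $\theta_v^*(F_{v_0})$ of $f_v$ are canonically identified with the componentwise (co)limits of the sources and targets of the $f_{v,i}$. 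The normalization condition $f_{v_0} = \mathrm{Id}_{F_{v_0}}$ is preserved because $\theta_{v_0}$ and $q_{v_0}$ are the identity and a (co)limit of identities is an identity.

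The universal property is then verified componentwise. A morphism $\phi:\mathcal{G}\to\mathcal{F}$ into the putative (co)limit is by definition a family of morphisms $\phi_v$ in the appropriate classifying topoi making the compatibility square with $f_v$ commute; such families correspond bijectively to compatible families of morphisms $\mathcal{G}\to \mathcal{F}_i$ (respectively $\mathcal{F}_i\to\mathcal{G}$) by applying the componentwise universal property place-by-place and invoking the naturality of $f_v = \varinjlim_i f_{v,i}$ to transport the compatibility squares across the (co)limit. I do not anticipate a serious obstacle; the only step that requires attention is the interchange of $q_v^*$ and $\theta_v^*$ with the (co)limits under consideration, which however is automatic from the topos-theoretic framework recalled in the preceding subsections.
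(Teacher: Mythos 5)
Your proposal is correct and takes essentially the same route as the paper: build the (co)limit componentwise in each classifying topos $B_{W_{k(v)}}$ (resp.\ $B_{W_{L/K,S}}$), use that $q_v^*$ and $\theta_v^*$ are inverse image functors of geometric morphisms so they preserve arbitrary colimits and finite limits, define the specialization map $l_v$ as the induced limit of the maps $f_{v,i}$ in $B_{\widetilde{W}_{K_v}}$, and verify the universal property place by place. The paper's proof is just a slightly more explicit spelling-out of the same computation.
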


\begin{e-proof}
In order to simplify the notations we assume here that
$\mathfrak{F}_{_{L/K,S}}=\mathfrak{F}_{_{W;\bar{Y}}}$.  Let $I$ be a
small category and let $G:I\rightarrow\mathfrak{F}_{_{W;\bar{Y}}}$
be an arbitrary functor. For any valuation $v$ of $K$, one has a
canonical functor
$$\fonc{i_v^*}{\mathfrak{F}_{_{W;\bar{Y}}}}{B_{W_{k(v)}}}{\mathcal{F}}{F_v}.$$
For any valuation $v$, we set
$$G_v:=i_v^*\circ G:I\rightarrow B_{W_{k(v)}}.$$ The inductive limit
$$L_v:=\displaystyle{\lim_{\longrightarrow}}_{I}\,\,G_v$$
exists in the topos $B_{W_{k(v)}}$. A map $i\rightarrow j$ of the
category $I$ induces a map $G(i)\rightarrow G(j)$ of the category
$\mathfrak{F}_{_{W;\bar{Y}}}$. Hence for any valuation $v$, one has
a commutative diagram of $B_{W_{K_v}}$:
 \[ \xymatrix{
 q_v^*\circ G_v(i)\ar[d] \ar[r] & q_v^*\circ G_v(j)\ar[d]   \\
 \theta_v^*\circ G_{v_0}(i)  \ar[r]& \theta_v^*\circ G_{v_0}(j)
} \] By the universal property of inductive limits, one has an
induced morphism
$$\displaystyle{\lim_{\longrightarrow}}_{I}\,\,q_v^*\circ G_v\longrightarrow\displaystyle{\lim_{\longrightarrow}}_{I}\,\,\theta_v^*\circ G_{v_0},$$
where the limits are calculated in the topos $B_{W_{K_v}}$. We get a
map
$$l_v:q_v^*(L_v)=q_v^*(\displaystyle{\lim_{\longrightarrow}}_{I}\,\,
G_v)=\displaystyle{\lim_{\longrightarrow}}_{I}\,\,q_v^*\circ
G_v\longrightarrow\displaystyle{\lim_{\longrightarrow}}_{I}\,\,\theta_v^*\circ
G_{v_0}=
\theta_v^*(\displaystyle{\lim_{\longrightarrow}}_{I}\,\,G_{v_0})=\theta_v^*(L_{v_0}),$$
since $q_v^*$ and $\theta_v^*$ commute with arbitrary inductive
limits. This yields an object
$$\displaystyle{\lim_{\longrightarrow}}_{I}\,\, G =\mathcal{L}:=(L_v;\,\,l_v)_{v\in\bar{Y}}$$
of $\mathfrak{F}_{_{W;\bar{Y}}}$. Now, one has to check that
$\mathcal{L}$ is the inductive limit of the functor $G$. For any
object $\mathcal{X}$ of $\mathfrak{F}_{_{W;\bar{Y}}}$, denote by
$k_{\mathcal{X}}:I\rightarrow\mathfrak{F}_{_{W;\bar{Y}}}$ the
constant functor associated to $\mathcal{X}$. By construction, there
is a natural transformation
$$a:G\longrightarrow k_{\mathcal{L}}$$ such that any other natural transformation
$$b:G\longrightarrow k_{\mathcal{X}}$$ factors through $a$.
Indeed, the $v$-component of $\mathcal{L}$ is defined as the
inductive limit of $G_v$ in $B_{W_{k(v)}}$ and the morphism $l_v$ is
defined as the limit of the corresponding system of compatible maps
of $B_{W_{K_v}}$. The proof for finite projective limits is
identical.
\end{e-proof}

\begin{prop}\label{flasktopos-is-topos}
The category $\mathfrak{F}_{_{L/K,S}}$ is a topos.
\end{prop}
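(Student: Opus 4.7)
The plan is to verify Giraud's axioms for $\mathfrak{F}_{_{L/K,S}}$. Proposition~\ref{limitfinies+colim-dans-flask-component-wise} has already established that arbitrary inductive and finite projective limits exist and are computed componentwise. The three exactness axioms---universality of colimits under pullback, disjointness of arbitrary sums, and effectivity of equivalence relations---involve only finite projective and arbitrary inductive limits; computing them componentwise, each reduces to the corresponding property in the Grothendieck topoi $B_{W_{k(v)}}$ and $B_{W_{L/K,S}}$. The structure maps $f_v$ remain coherent throughout because $q_v^*$ and $\theta_v^*$ are left exact and commute with arbitrary colimits.

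The substantial remaining step is to produce a small generating family. For each $v\neq v_0$, I would construct a functor $i_{v,!}:B_{W_{k(v)}}\to\mathfrak{F}_{_{L/K,S}}$ left adjoint to the component projection $i_v^*$. Given $F\in B_{W_{k(v)}}$, the object $i_{v,!}(F)=(G_w;g_w)_{w\in\bar{Y}}$ is defined by $G_v=F$, $G_{v_0}=\theta_{v,!}\,q_v^*(F)$ (where $\theta_{v,!}$ denotes the left adjoint of $\theta_v^*$), and $G_w=\emptyset$ for $w\neq v,v_0$; the structure map $g_v$ is the adjunction unit, while $g_w$ for $w\neq v,v_0$ is the unique map out of the initial object. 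An analogous construction handles $i_{v_0,!}$ for the trivial valuation. A direct verification using the adjunction $\theta_{v,!}\dashv\theta_v^*$ yields the bijection
\[
\mbox{Hom}_{\mathfrak{F}_{_{L/K,S}}}\bigl(i_{v,!}(F),\mathcal{X}\bigr)\simeq\mbox{Hom}_{B_{W_{k(v)}}}(F,X_v),
\]
since compatibility at $v$ forces the $v_0$-component of any morphism $i_{v,!}(F)\to\mathcal{X}=(X_w;h_w)$ to be the adjoint of $h_v\circ q_v^*(\phi_v)$, while the remaining components are uniquely determined by the universal property of the initial object.

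Taking the union $\bigcup_v i_{v,!}(\mathcal{G}_v)$ over small generating families $\mathcal{G}_v$ of each component topos then yields a small generating family for $\mathfrak{F}_{_{L/K,S}}$: two distinct morphisms must differ on some $v$-component, and $\mathcal{G}_v$ distinguishes them via the adjunction isomorphism above. The main conceptual obstacle is to confirm that $\theta_v^*$ admits a left adjoint---equivalently, that the geometric morphism of classifying topoi $B_{\widetilde{W}_{K_v}}\to B_{W_{L/K,S}}$ induced by the continuous group homomorphism $\theta_v$ is essential. This should follow from the standard induction construction applied to equivariant spaces; the remaining steps are routine once the componentwise structure is handled with care.
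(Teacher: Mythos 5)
Your proposal is correct and follows essentially the same route as the paper: Giraud's axioms (G1)--(G3) reduced to the componentwise limit computation, and (G4) handled by the left adjoint $\theta_{v,!}$ of $\theta_v^*$ (which the paper invokes via SGA4 IV.4.5.1) to build generators with $v_0$-component $\theta_{v,!}q_v^*(F)$, $v$-component $F$, and $\emptyset$ elsewhere. The only cosmetic difference is that you package the generator construction as a functor $i_{v,!}$ left adjoint to $i_v^*$, whereas the paper writes out the specific objects $\mathcal{X}_{v;i}$ directly --- the underlying mechanism, the adjunction $\theta_{v,!}\dashv\theta_v^*$, is identical.
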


\begin{e-proof}
Again, we assume that
$\mathfrak{F}_{_{L/K,S}}=\mathfrak{F}_{_{W;\bar{Y}}}$ (i.e. $L$ is
an algebraic closure of $K$ and $S$ is the set of non-trivial
valuations of $K$). To see that it is a topos, we use Giraud's
criterion (see \cite{SGA4} IV Théorème 1.2). Axioms $\emph{(G1)}$,
$\emph{(G2)}$ and $\emph{(G3)}$ follow from proposition
\ref{limitfinies+colim-dans-flask-component-wise} and the fact that
$q_v^*$ and $\theta_v^*$ commute with finite projective limits and
arbitrary inductive limits.

\ \\
\emph{(G1) The category $\mathfrak{F}_{_{W;\bar{Y}}}$ has finite
projective limits.\\} More explicitly, $\mathfrak{F}_{_{W;\bar{Y}}}$
has a final object $(e_{W_{k(v)}};f_v)_{v\in \bar{Y}}$. Here
$e_{W_{k(v)}}$ is the final object of $B_{W_{k(v)}}$ and $f_v$ is
the unique map from the final object of $B_{W_{K_v}}$ to itself. Let
$\phi:\mathcal{F}\rightarrow\mathcal{X}$ and
$\phi':\mathcal{F'}\rightarrow\mathcal{X}$ be two maps of
$\mathfrak{F}_{_{W;\bar{Y}}}$ with the same target
$\mathcal{X}=(X_v;\xi_v)$. The fiber product
$\mathcal{F}\times_{\mathcal{X}}\mathcal{F'}$ is defined as the
object $(F_v\times_{X_v}F'_v;\,f_v\times_{\xi_v}f'_v)_{v\in
\bar{Y}}$, where the fiber products are calculated in the categories
$B_{W_{k(v)}}$ and $B_{W_{K_v}}$ respectively.

\ \\
\emph{(G2) All (set-indexed) sums exist in
$\mathfrak{F}_{_{W;\bar{Y}}}$, and are disjoint and stable.}
\ \\
The initial object of $\mathfrak{F}_{_{W;\bar{Y}}}$ is
$(\emptyset_{W_{k(v)}};f'_v)_{v\in \bar{Y}}$, where
$\emptyset_{W_{k(v)}}$ is the initial object of $B_{W_{k(v)}}$ and
$f'_v:\emptyset_{W_{K_v}}\rightarrow\emptyset_{W_{K_v}}$ is the
trivial map. Moreover, fiber products are computed componentwise in
$\mathfrak{F}_{_{W;\bar{Y}}}$, and an isomorphism $\phi$ from
$\mathcal{F}=(F_v;f_v)_{v\in \bar{Y}}$ to
$\mathcal{F'}=(F'_v;f'_v)_{v\in \bar{Y}}$ is a family of compatible
isomorphisms $\phi_v:F_v\rightarrow F'_v\in Fl(B_{W_{k(v)}})$. Then
one easily sees that $\emph{(G2)}$ is satisfied by
$\mathfrak{F}_{_{W;\bar{Y}}}$ since it is satisfied by
$B_{W_{k(v)}}$ for any valuation $v$.

\ \\
\emph{(G3) The equivalence relations are effective and universal.}
\ \\
Again this follows from the fact that arbitrary inductive limits
exist and are computed componentwise in
$\mathfrak{F}_{_{W;\bar{Y}}}$.

\ \\
\emph{(G4) The category $\mathfrak{F}_{_{W;\bar{Y}}}$ has a
small set of generators.} \ \\
This axiom, however, requires some argument. Choose a small set
$\{X_{v;i};\,i\in I_v\}$ of generators of $B_{W_{k(v)}}$, for any
valuation $v$. Recall that the morphism of topological groups
$\theta_v:W_{K_v}\rightarrow W_K$ induces the sequence of three
adjoint functors
$${\theta_v}_!\,\,\,;\,\,\,\,\,\,\theta_v^*\,\,\,;\,\,\,\,\,\,{\theta_v}_*$$
between $B_{W_{K_v}}$ and $B_{W_K}$, since $\theta_v^*$ commutes
with arbitrary projective and inductive limits (see \cite{SGA4}
IV.4.5.1). The functors $\theta_v^*$ and $\theta_{v*}$ are
respectively the inverse image and the direct image of the
(essential) morphism $B_{\theta_v}:B_{W_{K_v}}\rightarrow B_{W_K}$.
Denote by $y:Top\rightarrow\mathcal{T}$ the Yoneda embedding. The
functor ${\theta_v}_!$ is defined by
$$\fonc{{\theta_v}_!}{B_{W_{K_v}}}{B_{W_K}}{F}{y(W_K)\times^{y(W_{K_v})}F:=(y(W_K)\times F)/{y(W_{K_v})}},$$
where $y(W_{K_v})$ acts on the left on $F$ and  by
right-translations on $y(W_K)$.

Let $v\neq v_0$ be a non-trivial valuation and let $i\in I_v$. We
define an object $\mathcal{X}_{v;i}$ of
$\mathfrak{F}_{_{W;\bar{Y}}}$ as follows :
$$\mathcal{X}_{v;i}=({\theta_v}_!(q_v^*({X}_{v;i}))\,\,\,;\,\,\,{X}_{v;i}\,\,\, ;\,\,\,
(\emptyset_{B_{W_{k(w)}}})_{w\neq{v_0;v}}\,\,\,;\,\,\,\,(\xi_w)_{w\in
\bar{Y}}).$$ Here the map
$$\xi_v:q_v^*({X}_{v;i})\longrightarrow\theta_v^*\circ{\theta_v}_!(q_v^*({X}_{v;i}))$$
is given by adjunction, and $\xi_w$ is the trivial map for any
$w\neq v,v_0$. For the trivial valuation $v_0$ and for any $i\in
I_{v_0}$, we set
$$\mathcal{X}_{v_0;i}:=({X}_{v_0;i}\,\,\,;\,\,\,(\emptyset_{B_{W_{k(w)}}})_{w\neq v_0}).$$
The family $\{\mathcal{X}_{v;i};\,\,\,v\in \bar{Y};\,\,\,i\in I_v\}$
is set indexed. We claim that it is a generating family of
$\mathfrak{F}_{_{W;\bar{Y}}}$. Let $\mathcal{F}=(F_v;f_v)_v$ be an
object of $\mathfrak{F}_{_{W;\bar{Y}}}$, let $v$ be a valuation of
$K$ and let $t_v:X_{v;i}\rightarrow F_v$ be a morphism in
$B_{W_{k(v)}}$. One needs to show that there exists a canonical
morphism
$$t:\mathcal{X}_{v;i}\longrightarrow \mathcal{F}$$ so that the $v$-component of $t$ is $t_v$.
It is obvious for the trivial valuation $v=v_0$. Let $v\neq v_0$ be
a non-trivial valuation. Consider the morphism
$$f_v\circ q_v^*(t_v):q_v^*(X_{v;i})\longrightarrow q_v^*(F_v)\longrightarrow \theta_v^*(F_{v_0}).$$
By adjunction, there is an identification
\begin{equation}\label{lichtenbaum-adjunction}
Hom_{B_{W_K}}({\theta_v}_!(q_v^*(X_{v;i}));F_{v_0})=Hom_{B_{W_{K_v}}}(q_v^*(X_{v;i});\theta_v^*(F_{v_0})).
\end{equation}
Hence there exists a unique morphism
$$t_0:{\theta_v}_!(q_v^*(X_{v;i}))\longrightarrow F_{v_0}$$ of
$B_{W_K}$ corresponding to $f_v\circ q_v^*(t_v)$ via
(\ref{lichtenbaum-adjunction}) so that the diagram
 \[ \xymatrix{
  q_v^*(X_{v;i})\ar[d]_{\xi_v} \ar[r]^{q_v^*(t_v)}
  &q_v^*(F_v)\ar[d]_{f_v}   \\
  \theta_v^*\theta_{v!}(q_v^*X_{i;v})  \ar[r]^{\,\,\,\,\,\,\,\theta_v^*(t_0)}&
  \theta_v^*F_{v_0}
} \] is commutative. We get a morphism
$t:\mathcal{X}_{v;i}\rightarrow \mathcal{F}$ of the category
$\mathfrak{F}_{_{W;\bar{Y}}}$.

 Now, consider two parallel arrows
$\phi,\,\,\varphi:\mathcal{F}\rightarrow\mathcal{E}$ so that, for
any arrow $t:\mathcal{X}_{v;i}\rightarrow\mathcal{F}$, one has
$\phi\circ t=\varphi\circ t$. The family $\{X_{v;i};\,i\in I_v\}$ is
a family of generators of $B_{W_{k(v)}}$ and each morphism
$t_v:X_{v;i}\rightarrow F_v$ induces a morphism
$t:\mathcal{X}_{v;i}\rightarrow \mathcal{F}$. It follows that
$$\phi_v=\varphi_v\in Fl(B_{W_{k(v)}}),$$ for any $v\in \bar{Y}$. By
definition of the morphisms in the category
$\mathfrak{F}_{_{W;\bar{Y}}}$, the functor
$$(i_v^*)_{v\in\bar{Y}}:\mathfrak{F}_{_{W;\bar{Y}}}\longrightarrow
\coprod_{v\in\bar{Y}}B_{W_{k(v)}}$$ is faithful. It follows that
$\phi=\varphi$. This shows that the family
$\{\mathcal{X}_{v;i};\,\,\,v\in \bar{Y};\,\,\,i\in I_v\}$ is a small
collection of generators of $\mathfrak{F}_{_{W;\bar{Y}}}$. Therefore
the category $\mathfrak{F}_{_{W;\bar{Y}}}$ is a topos.
\end{e-proof}

A topos is said to be \emph{compact} if any cover of the final
object by sub-objects has a finite sub-cover. A first consequence of
this artificial construction is the fact that this property is not satisfied by these flask topoi, as it is shown
below. As a consequence, the global sections functor (and a fortiori cohomology) does not commute
with filtered inductive limits (not even with direct sums).

\begin{prop}\label{prop-relat-flask-not-compact}
The topos $\mathfrak{F}_{_{L/K,S}}$ is not compact.
\end{prop}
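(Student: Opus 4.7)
The plan is to exhibit an infinite family of subobjects of the final object of $\mathfrak{F}_{L/K,S}$ which covers it but admits no finite subcover. The argument will exploit the infinitude of the set $\bar{Y}^0=\bar{Y}\setminus\{v_0\}$ of non-trivial valuations of $K$, together with Proposition~\ref{limitfinies+colim-dans-flask-component-wise}, which ensures that finite projective limits and arbitrary inductive limits in $\mathfrak{F}_{L/K,S}$ are computed componentwise.

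For each non-trivial valuation $v\in \bar{Y}^0$, I would define an object $\mathcal{U}_v=(U_w;\tilde{f}_w)_{w\in\bar{Y}}$ by setting $U_{v_0}:=e_{W_{L/K,S}}$, $U_v:=e_{W_{k(v)}}$, and $U_w:=\emptyset_{W_{k(w)}}$ for every $w\neq v,v_0$. Since the inverse image functors $q_w^*$ and $\theta_w^*$ preserve both the initial and the final object, the transition map $\tilde{f}_w:q_w^*(U_w)\to\theta_w^*(U_{v_0})$ is forced: it is the identity when $w\in\{v,v_0\}$ and the unique morphism from the initial to the final object otherwise, so $\mathcal{U}_v$ is a well-defined object of $\mathfrak{F}_{L/K,S}$. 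The componentwise map $\mathcal{U}_v\to e$ (identities at $v$ and $v_0$, and the canonical monomorphism $\emptyset\to e$ elsewhere) is a monomorphism, because fiber products are computed componentwise and each component is a monomorphism in the relevant classifying topos.

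It then remains to verify that $\{\mathcal{U}_v\}_{v\in\bar{Y}^0}$ covers the final object but no finite subfamily does. The key technical point, which is the main step to confirm, is that the supremum of subobjects, being the image of the coproduct and hence built from a finite limit (the kernel pair) followed by a coequalizer, is computed componentwise. This follows directly from Proposition~\ref{limitfinies+colim-dans-flask-component-wise}. Granting this, the $w$-component of the full join is $e_{W_{k(w)}}$ at every $w$ (take $v=w$ when $w\in\bar{Y}^0$, and any $v$ when $w=v_0$), so the family covers $e$. On the other hand, given any finite subfamily $\{\mathcal{U}_{v_1},\dots,\mathcal{U}_{v_n}\}$, the set $\bar{Y}^0\setminus\{v_1,\dots,v_n\}$ is nonempty since $\bar{Y}^0$ is infinite, and at any such $w$ the $w$-component of the finite join is $\emptyset_{W_{k(w)}}$, which is a proper subobject of $e_{W_{k(w)}}$. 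Hence no finite subfamily covers $e$, and the topos $\mathfrak{F}_{L/K,S}$ is not compact.
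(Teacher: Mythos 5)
Your proof is correct and takes essentially the same approach as the paper: your objects $\mathcal{U}_v$ are identical to the paper's $E_v$, and you verify the same chain of claims (subobject, covering, no finite subcover), simply spelling out the componentwise computations that the paper leaves implicit.
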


\begin{e-proof}
For any non-trivial valuation $v$ of $K$, let $E_v$ be the object of
$\mathfrak{F}_{_{L/K,S}}$ defined as follows. The $w$-component of
$E_{v}$ is the initial object $\emptyset$ of $B_{W_{k(w)}}$ for
$w\neq v,v_0$, and the final object for $w=v,v_0$ (i.e. the sheaf
represented by the one point space with trivial action). Let $e$ be
the final object of $\mathfrak{F}_{_{L/K,S}}$. The unique map
$E_v\rightarrow e$ is mono hence $E_v$ is a sub-object of the final
object of $\mathfrak{F}_{_{L/K,S}}$. The family $\{E_v\rightarrow
e,\,v\neq v_0\}$ is epimorphic. It is therefore a covering family of
$e$ by sub-objects. However, any finite sub-family is not a covering
family.
\end{e-proof}

\subsection{The morphisms associated to the valuations}
A valuation $v$ of the number field $K$ can be seen as a morphism
$v\rightarrow\bar{Y}$ inducing in turn a morphism of topoi.

\begin{prop}\label{closed-embedding-flask-topos}
For any non-trivial valuation $v$, there is a closed embedding :
$$i_v:=i_{_{L,S,v}}:B_{W_{k(v)}}\longrightarrow\mathfrak{F}_{_{L/K,S}}.$$
\end{prop}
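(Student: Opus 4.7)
The plan is to construct $i_v$ by hand as an adjoint pair $(i_v^*, i_{v*})$, and then recognize it as a closed embedding by producing an explicit subterminal object of $\mathfrak{F}_{_{L/K,S}}$ whose closed complement is the essential image of $i_{v*}$. The functor $i_v^*$ is the projection $\mathcal{F} = (F_w; f_w)_{w\in\bar Y} \mapsto F_v$ already introduced in the proof of Proposition~\ref{flasktopos-is-topos}. For its right adjoint I would set $i_{v*}(F)$ to be the object of $\mathfrak{F}_{_{L/K,S}}$ whose $v$-component is $F$, whose $w$-component for $w\neq v$ is the final object of the relevant classifying topos (with the final object of $B_{W_{L/K,S}}$ at $w=v_0$), and whose transition maps $f_w$ are the unique morphisms into final objects.

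The adjunction $i_v^*\dashv i_{v*}$ and the full faithfulness of $i_{v*}$ both drop out of Definition~3.1: a morphism $\mathcal{X}\to i_{v*}(F)$ is a family $(\phi_w)$ in which every $\phi_w$ with $w\neq v$ must land in a final object and is therefore uniquely determined, while the compatibility squares of Definition~3.1 are automatic whenever one vertex is final. So such a morphism is uniquely given by $\phi_v\in \mathrm{Hom}_{B_{W_{k(v)}}}(i_v^*\mathcal{X},F)$. Proposition~\ref{limitfinies+colim-dans-flask-component-wise} ensures that $i_v^*$ is left exact (finite projective limits are computed componentwise), so $(i_v^*, i_{v*})$ is a geometric morphism with fully faithful direct image. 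To upgrade this embedding to a \emph{closed} one, I would introduce the object $U_v$ of $\mathfrak{F}_{_{L/K,S}}$ whose $v$-component is the initial object $\emptyset$ of $B_{W_{k(v)}}$ and whose other components are final, with the trivial transition maps. Strictness of the initial object together with componentwise products yields $U_v\times U_v \simeq U_v$, so $U_v$ is a subobject of the final object $e$. A componentwise unwinding then shows that the closed complement of the open subtopos $\mathfrak{F}_{_{L/K,S}}/U_v$ in the sense of SGA4~IV.9.3 --- namely the full subcategory of $\mathcal{X}$ for which the projection $\mathcal{X}\times U_v\to U_v$ is invertible --- consists precisely of the objects whose $w$-component is final for every $w\neq v$, with no constraint on the $v$-component. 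This is exactly the essential image of $i_{v*}$.

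The one step demanding care is checking that the transition maps $f_w$ of an object in this closed complement are themselves forced, so that only the $v$-component survives as genuine data; this is automatic because each $f_w$ with $w\neq v$ has a final target, and the same holds for $f_v$ whose target is $\theta_v^*$ applied to a final object. Once this is noted, the identification of the closed complement with $i_{v*}(B_{W_{k(v)}})$ becomes a formality. The argument is formally parallel to the closed immersion of a closed point in a scheme, with Proposition~\ref{limitfinies+colim-dans-flask-component-wise} supplying all the componentwise reductions.
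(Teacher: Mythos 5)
Your proposal is correct and follows essentially the same approach as the paper: the same description of $i_v^*$ as the componentwise projection, the same $i_{v*}$ with final objects away from $v$, the same subterminal object $U_v$ (with initial $v$-component and final components elsewhere) cutting out the open complement, and the same identification of its closed complement with the essential image of $i_{v*}$. You spell out the adjunction and the subterminality of $U_v$ in slightly more detail than the paper, but the argument is the same.
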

\begin{e-proof}
For any valuation $v\neq v_0$, the functor
$$\fonc{i_v^*}{\mathfrak{F}_{_{L/K,S}}}{B_{W_{k(v)}}}{\mathcal{F}}{F_v}.$$
commutes with arbitrary inductive limits and finite projective
limits, since these limits are computed componentwise in the topos
$\mathfrak{F}_{_{L/K,S}}$. Hence $i_v^*$ is the pull-back of a
morphism of topoi
$i_v:B_{W_{k(v)}}\rightarrow\mathfrak{F}_{_{L/K,S}}.$ The same
argument shows that there is a morphism
$j_{_{L/K,S}}:B_{W_{L/K,S}}\rightarrow\mathfrak{F}_{_{L/K,S}}$.
Moreover, one easily sees that the functor
$$\fonc{i_{v*}}{B_{W_{k(v)}}}{\mathfrak{F}_{_{L/K,S}}}{F_v}{(e_{W_{K}};\,\,\,F_v;\,\,\,(e_{W_{k(w)}})_{w\neq v;v_0}\,\,\,)}.$$
is right adjoint to $i_{v}^*$, where $e_{W_{k(w)}}$ is the final
object of $B_{W_{k(w)}}$. Since the adjunction transformation
$Id\rightarrow i_v^*\circ i_{v*}$ is obviously an isomorphism, the
morphism $i_v$ is an embedding (see \cite{SGA4} IV. Définition
9.1.1). Consider the sub-terminal object $U:=((e_{W_{k(w)}})_{w\neq
v};\,\,\,\emptyset_{W_{k(v)}})$ of $\mathfrak{F}_{_{L/K,S}}$. It
defines an open sub-topos
$$j_v:\mathcal{U}:={(\mathfrak{F}_{_{L/K,S}})}_{/U}\longrightarrow\mathfrak{F}_{_{L/K,S}}.$$
The image of $i_{v*}$ is exactly the strictly full sub-category of
$\mathfrak{F}_{_{L/K,S}}$ defined by the objects $X$ such that
$j_v^*(X)$ is the final object of $\mathcal{U}$. Hence the image of
$i_{v}$ is the closed complement of the open sub-topos $\mathcal{U}$
(see \cite{SGA4} IV Proposition 9.3.4).
\end{e-proof}

The following corollary follows from the fact that $i_{v*}$ is a
closed embedding (see \cite{SGA4} IV.14).
\begin{e-cor}
The functor induced by $i_{v*}$ between the categories of abelian
sheaves is exact.
\end{e-cor}
More precisely, the functor $i_{v*}$ (between abelian categories)
has a left adjoint $i_{v}^*$ and a right adjoint $i^!_{v}$ (in fact
one has six adjoint functors). This last functor is defined as
follows :
$$
\fonc{i_v^!}{Ab\,(\mathfrak{F}_{_{L/K,S}})}{Ab\,(B_{W_{k(v)}})}{(F_w,f_w)_{w\in\bar{Y}}}{Ker
(f_v)}
$$

A morphism of topos $j=(j^*,j_*)$ is said to be \emph{essential} if
the
inverse image $j^*$ has a left adjoint $j_!$.

\begin{prop}
There is an essential morphism
$j:=j_{_{L/K,S}}:B_{W_{L/K,S}}\longrightarrow\mathfrak{F}_{_{L/K,S}}$.
\end{prop}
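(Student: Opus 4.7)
\begin{e-proof}[Plan of proof.]
The plan is to construct an explicit left adjoint $j_!$ to the inverse image $j^*$ already exhibited in the proof of Proposition~\ref{closed-embedding-flask-topos}. Recall that $j^*$ sends $\mathcal{F}=(F_v;f_v)_{v\in\bar{Y}}$ to its $v_0$-component $F_{v_0}$ and is the inverse image of a morphism of topoi $j:B_{W_{L/K,S}}\to\mathfrak{F}_{_{L/K,S}}$, since it commutes with arbitrary inductive limits and finite projective limits.

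The formula for $j_!$ is suggested by the generators $\mathcal{X}_{v_0;i}$ used in the proof of Proposition~\ref{flasktopos-is-topos}. For $F$ an object of $B_{W_{L/K,S}}$ I would set
$$j_!(F):=\bigl(F;\,(\emptyset_{W_{k(w)}})_{w\neq v_0};\,(f_v)_{v\in\bar{Y}}\bigr),$$
where $f_{v_0}=\mathrm{Id}_F$ and, for $v\neq v_0$, $f_v:q_v^*(\emptyset_{W_{k(v)}})\to\theta_v^*(F)$ is the unique map out of the initial object. This makes sense because each $q_v^*$, being the inverse image functor of a morphism of topoi, preserves the initial object. Functoriality of $j_!$ is then immediate.

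To verify the adjunction $j_!\dashv j^*$, I would take $\mathcal{G}=(G_v;g_v)$ in $\mathfrak{F}_{_{L/K,S}}$ and observe that a morphism $\phi:j_!(F)\to\mathcal{G}$ consists of $\phi_{v_0}:F\to G_{v_0}$ together with components $\phi_v:\emptyset_{W_{k(v)}}\to G_v$ for $v\neq v_0$; the latter are forced to be the unique maps out of the initial object, and the compatibility squares commute automatically because their upper-left corners $q_v^*(\emptyset_{W_{k(v)}})$ are themselves initial. Projection onto the $v_0$-component therefore provides a natural bijection
$$\mathrm{Hom}_{\mathfrak{F}_{_{L/K,S}}}(j_!(F),\mathcal{G})\;\simeq\;\mathrm{Hom}_{B_{W_{L/K,S}}}(F,j^*\mathcal{G}),$$
whose inverse sends $\psi$ to the morphism with $v_0$-component $\psi$ and unique components at all other valuations.

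I do not foresee a serious obstacle: the whole argument rests on the universal property of the initial object together with its preservation by the inverse image functors $q_v^*$ and $\theta_v^*$. The only conceptual subtlety is that the compatibility axiom defining $\mathfrak{F}_{_{L/K,S}}$ is satisfied by $j_!(F)$ in a degenerate but perfectly well-defined way, which is precisely what makes the construction land in the category.
\end{e-proof}
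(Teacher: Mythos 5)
Your proof is correct and follows essentially the same approach as the paper: both define $j_!$ by placing the initial object at every non-generic valuation (with the unique map out of $q_v^*(\emptyset)$ as specialization map) and invoke the universal property of the initial object to get the adjunction. The paper merely states the adjunction is clear, while you spell it out; there is no substantive difference.
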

\begin{e-proof}
The functor
$$\fonc{j^*}{\mathfrak{F}_{_{L/K,S}}}{B_{W_{L/K,S}}}{\mathcal{F}}{F_{v_0}}$$
commutes with arbitrary inductive limits and finite projective
limits. Therefore $j^*$ has a right adjoint $j_*$ and thus is the
pull-back of a morphism of topoi. We define
$$
\fonc{j_!}{B_{W_{L/K,S}}}{\mathfrak{F}_{_{L/K,S}}}{\mathcal{L}}{(F_v,f_v)_{v\in\bar{Y}}}
$$
where $F_{v_0}=\mathcal{L}$ and $F_v=\emptyset$ is the initial
object of $B_{W_{k(v)}}$ for any $v\neq v_0$. The map $f_v$ is the
unique map from the initial object of $B_{\widetilde{W}_{K_v}}$ to
$\theta_v^*\mathcal{L}$. Clearly, $j_!$ is left adjoint to $j^*$.
\end{e-proof}

\begin{prop}
The direct image functor $j_*$ is given by
$$
\fonc{j_*}{B_{W_{L/K,S}}}{\mathfrak{F}_{_{L/K,S}}}{\mathcal{L}}{(q_{v*}\theta_v^*\mathcal{L},l_v)_{v\in\bar{Y}}}
$$
where the map
$$l_v:q_{v}^*q_{v*}\theta_v^*\mathcal{L}\longrightarrow\theta_v^*q_{v_0*}\theta_{v_0}^*\mathcal{L}=\theta_v^*\mathcal{L}$$
is induced by the natural transformation $q_{v}^*q_{v*}\rightarrow
Id$, for any valuation $v$.
\end{prop}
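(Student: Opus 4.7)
The plan is to verify directly that the explicit formula proposed for $j_*$ gives a functor right adjoint to $j^*$; since $j^*$ already has a right adjoint by the previous proposition, uniqueness of adjoints will then force $j_*$ to coincide with the candidate functor. So I would construct, for every $\mathcal{F}=(F_v;f_v)_{v\in\bar{Y}}$ in $\mathfrak{F}_{_{L/K,S}}$ and every $\mathcal{L}\in B_{W_{L/K,S}}$, a natural bijection
$$\mathrm{Hom}_{\mathfrak{F}_{_{L/K,S}}}(\mathcal{F},\,(q_{v*}\theta_v^*\mathcal{L},\,l_v)_{v\in\bar{Y}}) \;\stackrel{\sim}{\longrightarrow}\; \mathrm{Hom}_{B_{W_{L/K,S}}}(F_{v_0},\mathcal{L}).$$

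First I would unpack the left-hand side using the definition of morphisms in $\mathfrak{F}_{_{L/K,S}}$: such a morphism is a family $(\phi_v)_{v\in\bar{Y}}$ with $\phi_{v_0}\colon F_{v_0}\to\mathcal{L}$ (using $q_{v_0*}\theta_{v_0}^*\mathcal{L}=\mathcal{L}$) and, for $v\neq v_0$, $\phi_v\colon F_v\to q_{v*}\theta_v^*\mathcal{L}$, subject to the commutativity condition
$$l_v\circ q_v^*(\phi_v) \;=\; \theta_v^*(\phi_{v_0})\circ f_v$$
in $B_{\widetilde{W}_{K_v}}$. Next I would invoke the adjunction $(q_v^*,q_{v*})$ induced by the morphism of topoi $B_{\widetilde{W}_{K_v}}\to B_{W_{k(v)}}$ to rewrite the datum of $\phi_v$ as the datum of a morphism $\widetilde{\phi}_v\colon q_v^*F_v\to\theta_v^*\mathcal{L}$ in $B_{\widetilde{W}_{K_v}}$. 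Under this identification, $l_v\circ q_v^*(\phi_v)$ is precisely $\widetilde{\phi}_v$ (by the triangular identity defining $l_v=\varepsilon_{\theta_v^*\mathcal{L}}$), so the compatibility condition becomes the equality $\widetilde{\phi}_v=\theta_v^*(\phi_{v_0})\circ f_v$.

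This equation is no longer a constraint but a formula: given $\phi_{v_0}\colon F_{v_0}\to\mathcal{L}$, the morphism $\widetilde{\phi}_v$ is uniquely determined as $\theta_v^*(\phi_{v_0})\circ f_v$, hence so is $\phi_v$ by adjunction. Conversely, starting from any $\phi_{v_0}$ this prescription produces a well-defined family $(\phi_v)_{v\in\bar{Y}}$ satisfying the required compatibility, so the map sending $(\phi_v)_v$ to $\phi_{v_0}$ is a bijection. Naturality in $\mathcal{F}$ and in $\mathcal{L}$ is immediate because the construction uses only the naturality of the adjunction unit/counit and of the pullbacks $q_v^*,\theta_v^*$.

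The only place to be careful, and the main step to handle with precision, is the identification $l_v\circ q_v^*(\phi_v)=\widetilde{\phi}_v$: this is exactly the standard description of the $(q_v^*,q_{v*})$-adjunction applied to the morphism $\phi_v$ with target $q_{v*}\theta_v^*\mathcal{L}$, so it reduces to checking the triangular identity on the object $\theta_v^*\mathcal{L}$. Once this is in place, the bijection above shows that the proposed functor represents the same functor as $j_*$, and by uniqueness of right adjoints it equals $j_*$; its functoriality is inherited from that of $q_{v*}\theta_v^*$ in $\mathcal{L}$, which concludes the proof.
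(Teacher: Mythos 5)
Your argument is correct and is essentially the paper's proof: both establish the adjunction $\mathrm{Hom}_{\mathfrak{F}_{L/K,S}}(\mathcal{F},j_*\mathcal{L})\simeq\mathrm{Hom}_{B_{W_{L/K,S}}}(F_{v_0},\mathcal{L})$ by invoking the $(q_v^*,q_{v*})$ adjunction at each non-trivial $v$ to convert the compatibility constraint into the unambiguous formula $\phi_v=$ (adjoint of $\theta_v^*(\phi_{v_0})\circ f_v$). The paper phrases it as \emph{constructing} $\phi_v$ from $\phi_{v_0}$, you phrase it as showing the constraint is a formula, but these are the same observation; your extra care about the triangular identity identifying $l_v\circ q_v^*(\phi_v)$ with the transpose of $\phi_v$ is the same adjunction fact the paper uses implicitly.
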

\begin{e-proof}
One has to show that $j_*$ is right adjoint to $j^*$. Let
$\mathcal{L}$ be an object of $B_{W_{L/K,S}}$ and let
$\mathcal{F}=(F_v;f_v)_{v\in\bar{Y}}$ be an object of
$\mathfrak{F}_{_{L/K,S}}$. For any map
$\phi_0:F_0\rightarrow\mathcal{L}$ of $B_{W_{L/K,S}}$ and any
non-trivial valuation $v$, consider the map
$$\theta_v^*(\phi_0)\circ
f_v:q_v^*F_v\rightarrow\theta_v^*F_0\rightarrow\theta_v^*\mathcal{L}.$$
Since $q_v^*$ is left adjoint to $q_{v*}$, there exists a unique map
$\phi_v:F_v\rightarrow q_{v*}\theta_v^*\mathcal{L}$ such that the
diagram
 \[ \xymatrix{
  q_v^*F_v\ar[r]^{q_v^*\phi_v} \ar[d]_{f_v}
  &q_v^*q_{v*}\theta_v^*\mathcal{L}\ar[d]_{l_v}   \\
  \theta_v^*F_0  \ar[r]^{\,\,\,\,\,\,\,\theta_v^*(\phi_0)}&
  \theta_v^*\mathcal{L}
} \] is commutative. We obtain a functorial isomorphism
$$Hom_{B_{W_{L/K,S}}}(j^*\mathcal{F},\mathcal{L})\simeq Hom_{\mathfrak{F}_{_{L/K,S}}}(\mathcal{F},j_*\mathcal{L}).$$
\end{e-proof}

\begin{e-cor}
The morphism $j:B_{W_{L/K,S}}\rightarrow\mathfrak{F}_{_{L/K,S}}$ is
an embedding.
\end{e-cor}
\begin{e-proof}
Indeed for any object $\mathcal{L}$ of $B_{W_{L/K,S}}$, the natural
map $j^*j_*\mathcal{L}\rightarrow\mathcal{L}$ is just the identity
of $\mathcal{L}$.
\end{e-proof}

If there is no risk of ambiguity, we denote
$\widetilde{W}_{k(v)}={W}_{k(v)}$ for $v\neq v_0$,
$\widetilde{W}_{k(v_0)}={W}_{L/K,S}$ and $j=i_{v_0}$.
\begin{prop}
The family of functors
$$\{i^*_v:\mathfrak{F}_{_{L/K,S}}\rightarrow B_{\widetilde{W}_{k(v)}},\,v\in\bar{Y}\}$$
is conservative.
\end{prop}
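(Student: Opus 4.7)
The plan is to exploit the componentwise nature of the category $\mathfrak{F}_{_{L/K,S}}$: since morphisms, composition and identities are all defined componentwise (as is visible from the definition and from Proposition~\ref{limitfinies+colim-dans-flask-component-wise}), the inverse of an isomorphism must also be defined componentwise, and the problem reduces to checking that a candidate componentwise inverse is in fact a morphism.

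First I would fix a morphism $\phi=(\phi_v)_{v\in\bar{Y}}:\mathcal{F}\to\mathcal{F}'$ of $\mathfrak{F}_{_{L/K,S}}$ such that $i_v^*(\phi)=\phi_v$ is an isomorphism in $B_{\widetilde{W}_{k(v)}}$ for every $v\in\bar{Y}$ (the hypothesis covers in particular the case $v=v_0$, using the convention $i_{v_0}=j$ and $\widetilde{W}_{k(v_0)}=W_{L/K,S}$). Set $\psi_v:=\phi_v^{-1}$ in $B_{\widetilde{W}_{k(v)}}$ for each $v$.

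The main verification is that the family $\psi=(\psi_v)_{v\in\bar{Y}}$ defines a morphism $\mathcal{F}'\to\mathcal{F}$ in $\mathfrak{F}_{_{L/K,S}}$, i.e.\ that for every non-trivial valuation $v$ the square
\[ \xymatrix{
q_v^*(F'_v)\ar[d]_{f'_v}\ar[r]^{q_v^*(\psi_v)} & q_v^*(F_v)\ar[d]_{f_v}\\
\theta_v^*(F'_{v_0})\ar[r]^{\theta_v^*(\psi_{v_0})} & \theta_v^*(F_{v_0})
} \]
commutes in $B_{\widetilde{W}_{K_v}}$. But the corresponding square for $\phi$ commutes by hypothesis, and since $q_v^*$ and $\theta_v^*$ are (inverse image) functors they send isomorphisms to isomorphisms; inverting the two vertical-free arrows $q_v^*(\phi_v)$ and $\theta_v^*(\phi_{v_0})$ in the $\phi$-square yields the desired $\psi$-square.

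Finally, because composition and identities in $\mathfrak{F}_{_{L/K,S}}$ are themselves computed componentwise, the relations $\psi\circ\phi=\mathrm{Id}_{\mathcal{F}}$ and $\phi\circ\psi=\mathrm{Id}_{\mathcal{F}'}$ reduce to $\psi_v\circ\phi_v=\mathrm{Id}_{F_v}$ and $\phi_v\circ\psi_v=\mathrm{Id}_{F'_v}$, which hold by the very definition of $\psi_v$. There is no substantive obstacle here; the only point requiring a line of argument is the componentwise-inversion of the compatibility squares, and this is immediate from the functoriality of $q_v^*$ and $\theta_v^*$.
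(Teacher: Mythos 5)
Your argument is correct and is precisely the verification the paper leaves implicit when it says the result ``follows immediately from the definitions'': because morphisms in $\mathfrak{F}_{_{L/K,S}}$ are componentwise data subject to compatibility squares, one inverts each $\phi_v$ and recovers the compatibility squares for the inverse by conjugating the original ones with the isomorphisms $q_v^*(\phi_v)$ and $\theta_v^*(\phi_{v_0})$. Same approach, just spelled out.
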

\begin{e-proof}
This follows immediately from the definitions.
\end{e-proof}

\begin{prop}
The family of functors
$$\{i^*_v:\mathfrak{F}_{_{L/K,S}}\longrightarrow B_{W_{k(v)}};\,\,\,v\in\bar{Y}^0\}$$
is not conservative.
\end{prop}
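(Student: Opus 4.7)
The plan is to exhibit a single morphism that is inverted by every $i_v^*$ with $v \in \bar{Y}^0$ but is not itself an isomorphism, which suffices to disprove conservativity of the family. The natural candidate is supplied by the essential morphism $j : B_{W_{L/K,S}} \to \mathfrak{F}_{_{L/K,S}}$ already constructed above. Pick any non-initial object $\mathcal{L}$ of $B_{W_{L/K,S}}$, for instance the final object $e_{W_{L/K,S}}$, and set $\mathcal{F} := j_!(\mathcal{L})$. By the explicit formula for $j_!$, the object $\mathcal{F}$ has $v_0$-component $\mathcal{L}$ and $v$-component the initial object $\emptyset_{B_{W_{k(v)}}}$ for every $v \in \bar{Y}^0$.

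Now consider the unique morphism $\phi : \emptyset_{\mathfrak{F}_{_{L/K,S}}} \to \mathcal{F}$ from the initial object, which by Proposition \ref{limitfinies+colim-dans-flask-component-wise} has all components $\emptyset_{B_{W_{k(v)}}}$ and $\emptyset_{B_{W_{L/K,S}}}$. For every $v \in \bar{Y}^0$, the $v$-component $i_v^*(\phi)$ is the identity map of $\emptyset_{B_{W_{k(v)}}}$, hence an isomorphism in $B_{W_{k(v)}}$. On the other hand $\phi$ cannot itself be an isomorphism, since applying the conservative family of Proposition above (which includes $v_0$) one finds that the $v_0$-component $j^*(\phi)$ is the canonical map $\emptyset_{B_{W_{L/K,S}}} \to \mathcal{L}$, which is not an isomorphism as $\mathcal{L}$ is not initial. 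Therefore the family $\{i_v^*\}_{v \in \bar{Y}^0}$ fails to reflect isomorphisms.

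There is no real obstacle here: the whole argument is an immediate consequence of the componentwise description of limits and of the formula for $j_!$, both already established. The point of the statement, which the proof makes transparent, is simply that removing the trivial valuation $v_0$ from the generating family destroys conservativity because the $v_0$-component of an object of $\mathfrak{F}_{_{L/K,S}}$ is not detected by any of the remaining stalks $i_v^*$.
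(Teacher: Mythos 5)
Your proof is correct and is essentially the same as the paper's: the object $\mathcal{G}$ the paper exhibits (final object as $v_0$-component, initial object everywhere else) is exactly $j_!(e_{W_{L/K,S}})$, and the witnessing morphism $\emptyset\to\mathcal{G}$ is the one you consider. Your presentation just packages the counterexample through the already-established formula for $j_!$ and the componentwise description of limits, which is a valid and clean way to phrase the same argument.
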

\begin{e-proof}In order to simplify the notations, we assume here that
$\mathfrak{F}_{_{L/K,S}}=\mathfrak{F}_{_{W;\bar{Y}}}$ Let
$\emptyset$ be the initial object of $\mathfrak{F}_{_{W;\bar{Y}}}$
and let $\mathcal{G}$ be the object whose $v_0$-component is the
final object of $B_{W_K}$ while its $v$-component is the initial
object of $B_{W_{k(v)}}$ for any $v\neq v_0$. Consider the morphism
$\phi:\emptyset\rightarrow\mathcal{G}$. Then  $\phi$ is not an
isomorphism while
$i_v^*(\phi):\emptyset_{W_{k(v)}}\rightarrow\emptyset_{W_{k(v)}}$ is
an isomorphism for any closed point $v$.
\end{e-proof}

\subsection{The transition maps.}

Let $(L/K,S)$ and $(L'/K,S')$ be as above. If $L\subset L'$ in
$\bar{K}$ and $S\subset S'$, then there is a canonical morphism
$$p:W_{L'/K,S'}\longrightarrow W_{L/K,S}.$$
\begin{prop}
There is an induced morphism of topoi
$$t:\mathfrak{F}_{_{L'/K,S'}}\longrightarrow\mathfrak{F}_{_{L/K,S}}.$$
For $L''/L'/L$ and $S\subset S'\subset S''$, the diagram
\[ \xymatrix{
\mathfrak{F}_{_{L''/K,S''}}\ar[r]\ar[rd]&\mathfrak{F}_{_{L'/K,S'}}\ar[d]\\
&\mathfrak{F}_{_{L/K,S}} }\] is commutative.
\end{prop}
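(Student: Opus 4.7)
The plan is to construct $t$ by exhibiting its inverse image functor $t^{*}:\mathfrak{F}_{_{L/K,S}}\to\mathfrak{F}_{_{L'/K,S'}}$ componentwise, checking left exactness and cocontinuity, and then appealing to the existence of a right adjoint. First I set up the compatibility data. The canonical map $p:W_{L'/K,S'}\to W_{L/K,S}$ gives a morphism of classifying topoi $B_{p}:B_{W_{L'/K,S'}}\to B_{W_{L/K,S}}$. For each non-trivial valuation $v$, $p$ sends the image $\widetilde{W}'_{K_v}$ of $W_{K_v}$ in $W_{L'/K,S'}$ to the image $\widetilde{W}_{K_v}$ in $W_{L/K,S}$, yielding a continuous homomorphism $p_v:\widetilde{W}'_{K_v}\to\widetilde{W}_{K_v}$. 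Writing $q'_v,\theta'_v$ for the maps at the $(L',S')$ level and $q_v,\theta_v$ for those at the $(L,S)$ level, one has the commutative squares $q_v\circ p_v=q'_v$ and $\theta_v\circ p_v=p\circ\theta'_v$.

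Now, given $\mathcal{F}=(F_v;f_v)_{v\in\bar{Y}}$ in $\mathfrak{F}_{_{L/K,S}}$, I set $t^{*}(\mathcal{F})=(F'_v;f'_v)_{v\in\bar{Y}}$ where $F'_{v_0}:=p^{*}F_{v_0}$, $F'_v:=F_v$ for $v\neq v_0$, and $f'_v$ is obtained by applying $p_v^{*}$ to $f_v:q_v^{*}F_v\to\theta_v^{*}F_{v_0}$, using the canonical natural isomorphisms $p_v^{*}q_v^{*}\simeq (q'_v)^{*}$ and $p_v^{*}\theta_v^{*}\simeq (\theta'_v)^{*}p^{*}$ furnished by the commutative squares above. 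By Proposition \ref{limitfinies+colim-dans-flask-component-wise}, finite projective limits and arbitrary inductive limits on both sides are computed componentwise; since $p^{*}$ and each $p_v^{*}$ are inverse images of morphisms of topoi, and hence preserve such limits, so does $t^{*}$. The existence of a right adjoint $t_{*}$ then follows from the special adjoint functor theorem, using the small generating family produced in the proof of Proposition \ref{flasktopos-is-topos}. The pair $(t^{*},t_{*})$ is the desired morphism of topoi.

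For the triangle, with $L''/L'/L$ and $S\subset S'\subset S''$, write $p_1:W_{L''/K,S''}\to W_{L'/K,S'}$, $p_2:W_{L'/K,S'}\to W_{L/K,S}$, and note that $p_2\circ p_1$ is the canonical map from the $(L'',S'')$-level to the $(L,S)$-level; analogous factorizations hold for the $p_v$'s. Since inverse images compose, applying this componentwise gives a canonical natural isomorphism between the composite $t^{*}$ (pulling back first through $(L,S)\to(L',S')$, then through $(L',S')\to(L'',S'')$) and the $t^{*}$ attached directly to $W_{L''/K,S''}\to W_{L/K,S}$, whence the commutativity in $\underline{\mathrm{Homtop}}$. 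The only non-formal step is the verification that the pulled-back structure maps $f'_v$ live in the correct classifying topos $B_{\widetilde{W}'_{K_v}}$, and this is exactly what the two natural isomorphisms $p_v^{*}q_v^{*}\simeq (q'_v)^{*}$ and $p_v^{*}\theta_v^{*}\simeq (\theta'_v)^{*}p^{*}$ encode; once these are in place, the remainder of the argument is bookkeeping.
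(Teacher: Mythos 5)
Your proof is correct and follows essentially the same route as the paper: you define $t^{*}$ componentwise via $p^{*}$ and $p_v^{*}$, verify the needed compatibility using the commutative squares $q_v\circ p_v = q'_v$ and $\theta_v\circ p_v = p\circ\theta'_v$, invoke the componentwise computation of limits (Proposition \ref{limitfinies+colim-dans-flask-component-wise}) to see $t^{*}$ is left exact and cocontinuous, and conclude the existence of $t_{*}$; the triangle is then handled by functoriality of the transition maps at each component, exactly as in the paper. The only cosmetic difference is that you make explicit the adjoint-functor-theorem step the paper leaves implicit.
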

In the following proof, for any non-trivial valuation $v$ we denote
by $W_{K_v,L,S}$ the image of $W_{K_v}$ in $W_{L/K,S}$ (this is
group is denoted by $\widetilde{W}_{K_v}$ in the rest of the paper).
Let $\theta_{v,L,S}:W_{K_v,L,S}\rightarrow W_{L/K,S}$ and
$q_{v,L,S}:W_{K_v,L,S}\rightarrow W_{k(v)}$ be the induced
morphisms. One has a continuous map $p_v:W_{K_v,L',S'}\rightarrow
W_{K_v,L,S}$.
\begin{e-proof}
Let $\mathcal{F}=(F_v;f_v)_{v\in\bar{Y}}$ be an object of
$\mathfrak{F}_{_{L/K,S}}$. Then,
$$t^*\mathcal{F}=(p^*F_{v_0},F_{v},p_v^*{f}_v)$$ does define an object of $\mathfrak{F}_{_{L'/K,S'}}$.
Indeed, $p_v^*{f}_v$ gives a map
$$q_{v,L',S'}^*F_v=p_v^*q_{v,L,S}^*F_v\longrightarrow p_v^*\theta_{v,L,S}^*F_0=\theta_{v,L',S'}^*p^*F_0,$$
since the diagram of topological groups
\[ \xymatrix{
W_{L'/K,S'}\ar[d]_{p} &W_{K_v,L',S'} \ar[l]_{\theta_{v,L',S'}}\ar[dr]^{q_{v,L',S'}} \ar[d]^{p_v}&   \\
W_{L/K,S}&W_{K_v,L,S} \ar[l]_{\theta_{v,L,S}}\ar[r]^{q_{v,L,S}}&
W_{k(v)} }
\]
is commutative. This yields a functor
$$t^*:\mathfrak{F}_{_{L/K,S}}\longrightarrow\mathfrak{F}_{_{L'/K,S'}},$$
which commutes with finite projective limits and arbitrary inductive
limits, by Proposition
\ref{limitfinies+colim-dans-flask-component-wise}. Hence $t^*$ is
the pull-back of a morphism of topoi $t$. The diagram of the
proposition is easily seen to be commutative, using the
commutativity of following triangles :
\[ \xymatrix{
W_{L''/K,S''}\ar[r]\ar[rd]&W_{L'/K,S'}\ar[d]&\mbox{and} &W_{K_v,L'',S''}\ar[r]\ar[rd]&W_{K_v,L',S'}\ar[d]\\
&W_{L/K,S}& & &W_{K_v,L,S} }\]
\end{e-proof}

\begin{e-rem}\label{rem-fiber-flask-topos}
The family $(\mathfrak{F}_{_{L/K,S}})_{_{L/K,S}}$ is a
\emph{projective system of topoi}. Indeed, consider the filtered set
$I/_K$ consisting of pairs $(L/K,S)$, where $L/K$ is a finite Galois
sub-extension of $\bar{K}/K$ and $S$ is a finite set of places of
$K$ containing the archimedian ones and the places ramified in
$L/K$. There is an arrow $(L'/K,S')\rightarrow (L/K,S)$ if and only
if $L\subseteq L'\subseteq \bar{K}$ and $S\subseteq S'$. The
previous proposition shows that one has a pseudo-functor
$$
\fonc{\mathfrak{F}_\bullet}{I/_K}{\underline{Topos}}{(L/K,S)}{\mathfrak{F}_{_{L/K,S}}}
$$
\end{e-rem}

\begin{prop}\label{prop-transitivity-incl-generic}
The following diagrams
\[ \xymatrix{
B_{W_{L'/K,S'}}\ar[d]^{}\ar[r]^{}&\mathfrak{F}_{_{L'/K,S'}}\ar[d]^{}&
&
B_{W_{k(v)}}\ar[dr]\ar[r]^{}&\mathfrak{F}_{_{L'/K,S'}}\ar[d]\\
B_{W_{L/K,S}}\ar[r]^{}&\mathfrak{F}_{_{L/K,S}}&&&\mathfrak{F}_{_{L/K,S}}
}\] are both commutative, for any non-trivial valuation $v$.
\end{prop}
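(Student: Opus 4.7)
The plan is to verify each diagram at the level of inverse image functors: by definition, the two diagrams of topoi commute precisely when the corresponding compositions of pullback functors are canonically isomorphic, and both verifications reduce to a direct componentwise computation using the explicit formula for $t^*$ established in the preceding proposition. Specifically, given an object $\mathcal{F}=(F_v;f_v)_{v\in\bar{Y}}$ of $\mathfrak{F}_{_{L/K,S}}$, we already know that
$$t^*\mathcal{F} = (p^*F_{v_0},\ F_v,\ p_v^*f_v)_{v\in\bar{Y}}.$$
This single formula is the main input; combined with the explicit descriptions of the inverse images $j_{_{L/K,S}}^*$ (extracting the $v_0$-component) and $i_v^*$ (extracting the $v$-component) obtained in the preceding propositions, everything falls out mechanically.

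For the left square, I would chase $\mathcal{F}\in\mathfrak{F}_{_{L/K,S}}$ around both paths. Going right-then-down at the level of inverse images yields $j_{_{L'/K,S'}}^*\,t^*\mathcal{F} = (t^*\mathcal{F})_{v_0} = p^*F_{v_0}$; going down-then-right yields $B_p^*\,j_{_{L/K,S}}^*\mathcal{F} = p^*F_{v_0}$, where $B_p:B_{W_{L'/K,S'}}\to B_{W_{L/K,S}}$ is the morphism of classifying topoi induced by $p$, whose inverse image is restriction of the action along $p$. The two functors agree on the nose, which provides the required canonical isomorphism. For the right triangle, the same strategy gives $i_{v,L'/K,S'}^*\,t^*\mathcal{F} = (t^*\mathcal{F})_v = F_v = i_{v,L/K,S}^*\mathcal{F}$ for every non-trivial valuation $v$; again the two inverse image functors coincide.

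There is no real obstacle to overcome — the content of the statement is already encoded in the formula for $t^*$ — but two points of bookkeeping deserve attention. First, one must check that these componentwise equalities assemble into natural isomorphisms of functors (as opposed to mere pointwise bijections); this is immediate from the fact that every relevant construction ($t^*$, $j^*$, $i_v^*$, $B_p^*$) is defined functorially and compatibly on morphisms of flask objects. Second, one should note that the commutativity of the triangles of topological groups drawn inside the proof of the preceding proposition is exactly what guarantees the compatibility of the various functors $p^*$, $p_v^*$, $\theta_v^*$, $q_v^*$ entering the formulas — so no further verification at the level of groups is required here.
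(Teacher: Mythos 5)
Your proof is correct and takes essentially the same approach as the paper, which simply declares the commutativity "immediate from the definition of $t$"; you have just made explicit the componentwise verification at the level of inverse image functors that the paper leaves tacit.
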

\begin{e-proof}
This follows immediately from the definition of $t$.
\end{e-proof}
For any $v\in\bar{Y}$, there is a canonical morphism of topological
groups
$$l_v:W_{k(v)}\rightarrow \mathbb{R}.$$
For the trivial valuation $v=v_0$, the map $l_{v_0}$ is defined as
follows:
$$l_{v_0}:W_{k(v_0)}=W_K\longrightarrow
W_K^{ab}\simeq
C_K\longrightarrow\mathbb{R}^{>0}\longrightarrow\mathbb{R}.$$ The
first map is the projection from $W_K$ to its maximal abelian
Hausdorff quotient. The second map is given by the absolute value
map from the idèle class group $C_K$ to $\mathbb{R}^{>0}$. The third
map is the logarithm.

Let $Pic(\bar{Y})$ be the topological group obtained by dividing the
idèle class group $C_K$ by the unit idèles. This group is known as
the \emph{Arakelov Picard group}. The map $C_K\rightarrow\mathbb{R}$
defined as above induces a map $Pic(\bar{Y})\rightarrow\mathbb{R}$.
One has also a continuous morphism $W_{k(v)}\rightarrow
Pic(\bar{Y})$, for any non-trivial valuation $v$. This yields the
map
$$l_v:W_{k(v)}\rightarrow Pic(\bar{Y})\rightarrow\mathbb{R}.$$
Note that if $v$ is ultrametric, then $l_v$ sends the canonical
generator of  $W_{k(v)}$ to $log(N(v))\in\mathbb{R}$, where
$N(v)=|k(v)|$ is the norm of the closed point $v$ of the scheme $Y$.
Finally, The map $l_{v_0}$ induces a morphism
$l_{L,S}:W_{L/K,S}\rightarrow\mathbb{R}$. We have an induced
morphism of classifying topoi :
$$B_{l_{L,S}}:B_{W_{L/K,S}}\longrightarrow B_{\mathbb{R}}.$$

\begin{prop}\label{morph-flask-ds-BR}
There is a morphism
$$f_{_{L/K,S}}:\mathfrak{F}_{_{L/K,S}}\longrightarrow
B_{\mathbb{R}}$$ so that $f_{_{L/K,S}}\circ i_v$ is isomorphic to
$B_{l_v}$, for any closed point $v$ of $\bar{Y}$.
\end{prop}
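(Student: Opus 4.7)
The plan is to construct $f_{L/K,S}$ by defining its inverse image functor
$$f^*_{L/K,S}:B_{\mathbb{R}}\longrightarrow \mathfrak{F}_{L/K,S}$$
componentwise, using the family $\{l_v:W_{k(v)}\to\mathbb{R}\}_{v\in\bar{Y}}$ together with the map $l_{L,S}:W_{L/K,S}\to\mathbb{R}$ induced by $l_{v_0}$.

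The essential preliminary is a local-global compatibility: for every non-trivial valuation $v$, the two continuous morphisms
$$l_v\circ q_v,\qquad l_{L,S}\circ\theta_v\;:\;\widetilde{W}_{K_v}\longrightarrow \mathbb{R}$$
coincide. This is a restatement of the fact that $l_{L,S}$ is obtained from $l_{v_0}:W_K\to W_K^{ab}\simeq C_K\to\mathbb{R}^{>0}\to\mathbb{R}$ together with local class field theory: under the reciprocity map, the image in $C_K$ of a Frobenius lift in $W_{K_v}$ (or of the image under the archimedean local Weil map) has absolute value corresponding, after taking logarithms, to the image of the canonical generator of $W_{k(v)}$ under $l_v$. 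The two maps $\widetilde{W}_{K_v}\to\mathbb{R}$ thus agree on a topological generator, hence everywhere by continuity and the structure of $W_{k(v)}$.

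Granted this compatibility, I define
$$f^*_{L/K,S}(E):=\bigl(l_v^*(E),\; l_{L,S}^*(E),\; \mathrm{id}\bigr)_{v\in\bar{Y}}\in \mathfrak{F}_{L/K,S},$$
where the transition datum at $v\neq v_0$ is the tautological identification
$$q_v^*\,l_v^*(E)=(l_vq_v)^*(E)=(l_{L,S}\theta_v)^*(E)=\theta_v^*\,l_{L,S}^*(E).$$
The assignment is functorial in $E$ and commutes with finite projective limits and arbitrary inductive limits, because each of $l_v^*$ and $l_{L,S}^*$ does, and because such limits are computed componentwise in $\mathfrak{F}_{L/K,S}$ by Proposition \ref{limitfinies+colim-dans-flask-component-wise}. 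Exactly as in the constructions of $i_v$ and $j$ carried out earlier in this section, these properties guarantee that $f^*_{L/K,S}$ is the inverse image of a morphism of topoi $f_{L/K,S}:\mathfrak{F}_{L/K,S}\to B_{\mathbb{R}}$.

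Finally, the required isomorphism $f_{L/K,S}\circ i_v\simeq B_{l_v}$ is verified on inverse images: the composite $i_v^*\circ f^*_{L/K,S}$ extracts the $v$-component of $f^*_{L/K,S}(E)$, which by construction is $l_v^*(E)=B_{l_v}^*(E)$. Since a morphism of topoi is determined up to canonical isomorphism by its inverse image functor, this yields the claimed identification. The main (and essentially only nontrivial) point of the proof is the compatibility in the second step; once it is secured, everything else is formal and proceeds in parallel with the earlier constructions of morphisms out of $\mathfrak{F}_{L/K,S}$.
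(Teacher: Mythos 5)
Your proof follows essentially the same route as the paper's: both define $f^*_{L/K,S}$ componentwise via $B_{l_v}^*$ and $l_{L,S}^*$, use the identity as transition datum, invoke the commutativity of the square $l_v\circ q_v = l_{L,S}\circ\theta_v$, and then appeal to Proposition~\ref{limitfinies+colim-dans-flask-component-wise} to see that this inverse image commutes with finite projective limits and arbitrary inductive limits, hence underlies a morphism of topoi; the identification $f_{L/K,S}\circ i_v\simeq B_{l_v}$ is then read off on inverse images. The only difference is presentational: you make an explicit (if slightly overstated, since $W_{k(v)}\simeq\mathbb{R}$ at archimedean places is not generated by a single element) case for the compatibility via class field theory, whereas the paper simply records it as a commutative square — in fact the commutativity is essentially automatic since $l_v$ and $l_{v_0}$ are, by construction, both induced by the absolute-value map on the id\`ele class group.
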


\begin{e-proof}
In order to simplify the notations, we assume that
$\mathfrak{F}_{_{L/K,S}}=\mathfrak{F}_{_{W;\bar{Y}}}$. The functor
${(B_{l_v}^*)}_v:B_{\mathbb{R}}\rightarrow\coprod_v B_{W_{k(v)}}$
factors through $\mathfrak{F}_{_{W;\bar{Y}}}$. Indeed, for any
object $\mathcal{F}$ of $B_{\mathbb{R}}$, define
$$f^*(\mathcal{F}):=(B_{l_v}^*(\mathcal{F})\,;\,Id_{B_{L_v}^*(\mathcal{F})})_{v\in\bar{Y}}.$$
Here
$$Id_{B_{L_v}^*(\mathcal{F})}:q_v^*B_{l_v}^*(\mathcal{F})=B_{L_v}^*(\mathcal{F})
\longrightarrow
B_{L_v}^*(\mathcal{F})=\theta_v^*B_{l_{v_0}}^*(\mathcal{F})$$ is the
identity of the object $B_{L_v}^*(\mathcal{F})$ of the category
$B_{W_{K_v}}$, where $L_v:W_{K_v}\rightarrow \mathbb{R}$ is the
canonical morphism. This is well defined since the square

\[ \xymatrix{
  W_{K_v} \ar[d]_{\theta_v} \ar[r]^{q_v} & W_{k(v)}\ar[d]^{l_v}   \\
  W_{K}   \ar[r]^{l_{v_0}}& \mathbb{R}
} \] is commutative and $L_v:=l_{v_0}\circ\theta_v=l_v\circ q_v$.
This yields a functor
$$f^*:B_{\mathbb{R}}\longrightarrow\mathfrak{F}_{_{W;\bar{Y}}},$$
which commutes with finite projective limits and arbitrary inductive
limits, by Proposition
\ref{limitfinies+colim-dans-flask-component-wise}. Hence $f^*$ is
the pull-back of a morphism of topoi $f$ such that there is an
isomorphism of functors $i_v^*\circ f^*\simeq B_{l_v}^*$. For finite
$L/K$ and finite $S$, the same construction is valid by replacing
$W_K$ with $W_{L/K,S}$ and $W_{K_v}$ with $\widetilde{W}_{K_v}$.
\end{e-proof}

\begin{prop}\label{compatible-morphisms-flask-BR}
The following diagram is commutative
\[ \xymatrix{
\mathfrak{F}_{_{L'/K,S'}}   \ar[dr] \ar[r] & \mathfrak{F}_{_{L/K,S}}\ar[d]   \\
& B_{\mathbb{R}} } \] for any $\bar{K}/L'/L/K$ and $S\subset S'$.
\end{prop}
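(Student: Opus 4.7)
The plan is to check commutativity at the level of inverse image functors, i.e.\ to exhibit a canonical isomorphism
$$t^*\circ f_{_{L/K,S}}^*\simeq f_{_{L'/K,S'}}^*,$$
where $t:\mathfrak{F}_{_{L'/K,S'}}\to\mathfrak{F}_{_{L/K,S}}$ is the transition morphism. Since both sides are left exact functors $B_{\mathbb{R}}\to\mathfrak{F}_{_{L'/K,S'}}$ admitting right adjoints, such an isomorphism will yield the desired $2$-isomorphism of geometric morphisms.

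To carry this out, I would first unpack the definitions. For $\mathcal{F}\in B_{\mathbb{R}}$, Proposition \ref{morph-flask-ds-BR} gives
$$f_{_{L/K,S}}^*(\mathcal{F})\;=\;\bigl(B_{l_{L,S}}^*(\mathcal{F})\,;\,B_{l_v}^*(\mathcal{F})\,;\,Id\bigr)_{v\in\bar{Y}},$$
the structural maps being identities because $L_v=l_{v_0}\circ\theta_v=l_v\circ q_v$. Applying $t^*$ as described in the proof of the preceding transitivity proposition, one obtains
$$t^*\bigl(f_{_{L/K,S}}^*(\mathcal{F})\bigr)\;=\;\bigl(p^*B_{l_{L,S}}^*(\mathcal{F})\,;\,B_{l_v}^*(\mathcal{F})\,;\,p_v^*(Id)\bigr)_{v\in\bar{Y}},$$
where $p:W_{L'/K,S'}\to W_{L/K,S}$ and $p_v:\widetilde{W}_{K_v,L',S'}\to\widetilde{W}_{K_v,L,S}$ are the canonical transition maps.

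The key point is then the compatibility of the logarithmic maps $l_{L,S}$ and $l_{L',S'}$, namely $l_{L,S}\circ p=l_{L',S'}$. This is essentially immediate from the construction of $l_{L,S}$: both sides are obtained by composing the projection $W_{L'/K,S'}\to W_K^{ab}$ (which factors through $W_{L/K,S}^{ab}$) with the absolute value map $C_K\to\mathbb{R}^{>0}$ and the logarithm. It follows that $p^*\circ B_{l_{L,S}}^*=B_{l_{L',S'}}^*$ canonically. The structural map $p_v^*(Id)$ is again an identity, which matches the structural map of $f_{_{L'/K,S'}}^*(\mathcal{F})$.

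Combining these identifications gives the desired isomorphism $t^*\circ f_{_{L/K,S}}^*\simeq f_{_{L'/K,S'}}^*$ functorially in $\mathcal{F}$, whence the commutativity of the diagram. There is no real obstacle here; the only care needed is to keep the notation for the various Weil-group quotients straight, and the substantive ingredient is simply the evident compatibility $l_{L,S}\circ p=l_{L',S'}$ of the logarithmic morphisms, which is built into their definition via $W_K^{ab}\to Pic(\bar{Y})\to\mathbb{R}$.
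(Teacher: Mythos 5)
The paper explicitly leaves this proof to the reader, so there is no argument to compare against; your proposal supplies exactly the expected verification. Your approach is correct: commutativity at the level of inverse image functors follows from unwinding $f^*_{_{L/K,S}}$ componentwise via Proposition~\ref{morph-flask-ds-BR}, applying the description of $t^*$ from the transition--map proposition, and then invoking the compatibility $l_{L,S}\circ p=l_{L',S'}$, which holds because both composites equal the factorization of $l_{v_0}\colon W_K\to\mathbb{R}$ through the quotient $W_{L'/K,S'}$. One small wording issue: there is no natural map $W_{L'/K,S'}\to W_K^{ab}$ (the map goes the other way, $W_K^{ab}\to W_{L'/K,S'}^{ab}$, since $W_{L'/K,S'}$ is a quotient of $W_K$); the correct justification, which your argument really uses, is simply that $l_{v_0}$ factors through each $W_{L/K,S}$ by definition of $l_{L,S}$, hence the factorizations are compatible along $p$. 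With that phrasing cleaned up, the proof is complete.
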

The proof is left to the reader.
\begin{prop}
Let $\mathcal{L}$ be an object of $\mathcal{T}$ with trivial
$y(W_{L/K,S})$-action. We denote also by $\mathcal{L}$ the object of
$B_{\mathbb{R}}$ defined by $\mathcal{L}$ with trivial
$y(\mathbb{R})$-action. There is an isomorphism
$$j_{_{L/K,S}*}\mathcal{L}\simeq f_{_{L/K,S}}^*\mathcal{L}.$$
\end{prop}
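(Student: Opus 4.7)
The plan is to compare both objects componentwise in $\mathfrak{F}_{_{L/K,S}}$ and to check that the transition maps agree. Recall that by the proof of Proposition \ref{morph-flask-ds-BR}, one has $f_{_{L/K,S}}^*\mathcal{L}=(B_{l_v}^*\mathcal{L};\,\mathrm{Id})_{v\in\bar{Y}}$ with all transition maps identities of $B_{L_v}^*\mathcal{L}$, while by the preceding proposition $j_{_{L/K,S}*}\mathcal{L}=(q_{v*}\theta_v^*\mathcal{L};\,l_v)_{v\in\bar{Y}}$ with transition map the counit of $q_v^*\dashv q_{v*}$. So I just have to identify the $v$-components and show that both families of transition maps become the identity under this identification.

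The key input I would use is the following fact about classifying topoi: if $q:G\twoheadrightarrow H$ is a continuous surjection of topological groups admitting local sections, then $q^*:B_H\to B_G$ is fully faithful (its essential image being the full subcategory of objects on which $\ker q$ acts trivially). Consequently the unit $\mathrm{Id}\to q_*q^*$ is an isomorphism. Denoting by $\pi_G:G\to 1$ the terminal morphism, this gives, via the factorization $\pi_{\widetilde{W}_{K_v}}=\pi_{W_{k(v)}}\circ q_v$ and the identification $B_1\simeq\mathcal{T}$, an isomorphism
\[
q_{v*}\,\pi_{\widetilde{W}_{K_v}}^*X\;\simeq\;\pi_{W_{k(v)}}^*X
\]
for every $X\in\mathcal{T}$. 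In words, $q_{v*}$ sends a trivially-acting object to the same underlying object with trivial $W_{k(v)}$-action.

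Applying this to $\mathcal{L}$: since the $W_{L/K,S}$-action on $\mathcal{L}$ is trivial, $\theta_v^*\mathcal{L}=\pi_{\widetilde{W}_{K_v}}^*\mathcal{L}$, whence $q_{v*}\theta_v^*\mathcal{L}\simeq\pi_{W_{k(v)}}^*\mathcal{L}$, which is nothing but $B_{l_v}^*\mathcal{L}$ (the latter being $\mathcal{L}$ with $W_{k(v)}$-action pulled back from the already-trivial $\mathbb{R}$-action). At the trivial valuation the two $v_0$-components tautologically coincide with $\mathcal{L}$ endowed with its trivial $W_{L/K,S}$-action. For the transition maps, the triangle identities for the adjunction $q_v^*\dashv q_{v*}$ show that, whenever the unit is an isomorphism, the counit $q_v^*q_{v*}(q_v^*Y)\to q_v^*Y$ is the identity; since $\theta_v^*\mathcal{L}$ lies in the essential image of $q_v^*$, the transition map of $j_{_{L/K,S}*}\mathcal{L}$ at $v$ becomes the identity of $\pi_{\widetilde{W}_{K_v}}^*\mathcal{L}=B_{L_v}^*\mathcal{L}$, matching the one in $f_{_{L/K,S}}^*\mathcal{L}$ via the compatibility $l_v\circ q_v=L_v=l_{v_0}\circ\theta_v$ already used in Proposition \ref{morph-flask-ds-BR}.

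The main obstacle is the assertion of full faithfulness of $q_v^*$ for the topological quotient $q_v$; this is the only step where the topological (as opposed to discrete) nature of the groups matters, and it reduces to a standard descent-type statement for the classifying topos of a topological-group quotient. Once this is granted, the entire proof is a diagram-level reading of the formulas for $j_*$ and $f^*$, carried out separately at $v=v_0$ and at each closed point $v$.
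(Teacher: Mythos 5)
Your proof is correct and follows essentially the same route as the paper: both compute $f^*_{_{L/K,S}}\mathcal{L}$ and $j_{_{L/K,S}*}\mathcal{L}$ componentwise, identify $q_{v*}\theta_v^*\mathcal{L}$ with $\mathcal{L}$ (trivial action) via the adjunction unit, and observe that the counit transition map becomes the identity under this identification. The paper simply asserts that the unit $\mathcal{L}\to q_{v*}q_v^*\mathcal{L}$ is an isomorphism, whereas you spell out the reason (full faithfulness of $q_v^*$ for a topological-group quotient admitting local sections), which is a helpful precision but not a different argument.
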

\begin{e-proof}On the one hand, one has $f_{_{L/K,S}}^*\mathcal{L}=(F_v,f_v)_{v\in\bar{Y}}$, where
$F_v$ is defined by the trivial action on $\mathcal{L}$, for any
valuation $v$. The map $f_v$ is given by the identity of
$\mathcal{L}$.

On the other hand, one has
$j_{_{L/K,S}*}\mathcal{L}=(q_{v*}\theta_v^*\mathcal{L},l_v)$. Let
$v$ be a non-trivial valuation of $K$. The object
$\theta_v^*\mathcal{L}$ is $\mathcal{L}$ with trivial
$y(\widetilde{W}_{K_v})$-action. The map $\mathcal{L}\rightarrow
q_{v*}q_v^*\mathcal{L}$ given by adjunction is an isomorphism. It
follows that $q_{v*}\mathcal{L}$ is $\mathcal{L}$ with trivial
$y(W_{k(v)})$-action.
\end{e-proof}

\begin{e-rem}In particular the following assertions hold.
Let $\mathbb{Z}$ be the constant object of $\mathcal{T}$. Then
$j_{_{L/K,S}*}\mathbb{Z}$ is the constant object of
$\mathfrak{F}_{_{L/K,S}}$ associated to $\mathbb{Z}$. Let
$\widetilde{\mathbb{R}}$ be the object of $\mathcal{T}$ represented
by the topological group $\mathbb{R}$. Then
$j_{_{L/K,S}*}\widetilde{\mathbb{R}}$ is the constant object of
$\mathfrak{F}_{_{L/K,S}}$ (over $\mathcal{T}$) associated to
$\widetilde{\mathbb{R}}$. In other words, the $v$-component of
$j_{_{L/K,S}*}\widetilde{\mathbb{R}}$ is $\widetilde{\mathbb{R}}$
for any valuation $v$ with $Id_{\widetilde{\mathbb{R}}}$ as
specialization maps.
\end{e-rem}

\section{Cohomology}
In order to use the results of \cite{MatFlach}, we assume in this
section that $K$ is a totally imaginary number field. We compute the
cohomology of the total flask topos and the Lichtenbaum Weil-étale
cohomology of any open subset of $\bar{Y}$. The Lichtenbaum
Weil-étale cohomology is defined as a direct limit and requires some
precautions to be computed rigourously.

\subsection{Preliminaries}
Recall that $\theta_{v_0}=q_{v_0}=Id_{W_{L/K,S}}$. In particular the
direct image of the induced morphism of topoi
$q_{v_0*}:B_{W_{L/K,S}}\rightarrow B_{W_{L/K,S}}$ is the identity
functor. Hence $R^n(q_{v_0*})=0$ for $n\geq 1$.

\begin{prop}\label{prop-higher-direct-im-generic}
Let $\mathcal{A}$ be an abelian object of $B_{W_{L/K,S}}$. For any
$n\geq 0$, one has
$$R^n(j_{_{L,S*}})(\mathcal{A})=(R^n(q_{v*})\theta_v^*\mathcal{A},t_v).$$
Here the map $t_v$ is the trivial map
$$t_v:q_v^*R^n(q_{v*})\theta_v^*\mathcal{A}\longrightarrow\theta_v^*R^n(q_{v_0*})\theta_{v_0}^*\mathcal{A}=0,$$
for $n\geq 1$.
\end{prop}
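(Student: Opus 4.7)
The plan is to derive $R^n j_{_{L,S*}}$ from an injective resolution $\mathcal{A}\hookrightarrow \mathcal{I}^\bullet$ in $B_{W_{L/K,S}}$ and to transport the computation componentwise into $\mathfrak{F}_{_{L/K,S}}$. Using the explicit formula for $j_{_{L,S*}}$ obtained in the previous proposition, the complex $j_{_{L,S*}}\mathcal{I}^\bullet$ has $v$-component $q_{v*}\theta_v^*\mathcal{I}^\bullet$, the transition maps being induced by the counits $q_v^*q_{v*}\to \mathrm{Id}$. By Proposition~\ref{limitfinies+colim-dans-flask-component-wise}, kernels and cokernels, hence the cohomology objects of any complex, are computed componentwise in $\mathfrak{F}_{_{L/K,S}}$. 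Thus the $v$-component of $R^n j_{_{L,S*}}(\mathcal{A})$ equals $H^n(q_{v*}\theta_v^*\mathcal{I}^\bullet)$.

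The decisive step is then to identify this with $R^n q_{v*}(\theta_v^*\mathcal{A})$. The functor $\theta_v^*$ is exact as the inverse image of a morphism of topoi; moreover it admits the explicit left adjoint $\theta_{v!}=y(W_{L/K,S})\times^{y(\widetilde{W}_{K_v})}(-)$ recalled in the proof of Proposition~\ref{flasktopos-is-topos}. Since $\widetilde{W}_{K_v}$ is a closed subgroup of $W_{L/K,S}$ acting freely by right-translations, this induction functor is exact, so $\theta_v^*$ preserves injective objects. Consequently $\theta_v^*\mathcal{I}^\bullet$ is an injective resolution of $\theta_v^*\mathcal{A}$ in $B_{\widetilde{W}_{K_v}}$, and applying $q_{v*}$ yields $H^n(q_{v*}\theta_v^*\mathcal{I}^\bullet)=R^n q_{v*}(\theta_v^*\mathcal{A})$, which is the desired formula for the $v$-component.

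For the transition maps, $q_{v_0}=\theta_{v_0}=\mathrm{Id}$ implies that the $v_0$-component of $j_{_{L,S*}}\mathcal{I}^\bullet$ is $\mathcal{I}^\bullet$ itself, whose $n$-th cohomology vanishes for $n\geq 1$. Thus $(R^n j_{_{L,S*}}\mathcal{A})_{v_0}=0$ in this range, and the transition map $t_v$ induced on cohomology sheaves by the $l_v$ necessarily lands in $\theta_v^*\cdot 0=0$, so it is the trivial map as announced.

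The genuine obstacle is the exactness of $\theta_{v!}$ on which the preservation of injectives rests; this is a group-theoretic assertion coming down to the fact that $\widetilde{W}_{K_v}\hookrightarrow W_{L/K,S}$ is a closed inclusion, so that the quotient $y(W_{L/K,S})\to y(W_{L/K,S})/y(\widetilde{W}_{K_v})$ behaves like a principal bundle in the topos $\mathcal{T}$ and the induced tensor construction preserves finite limits.
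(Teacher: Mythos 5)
Your proof is essentially the same as the paper's, repackaged: where you pass an injective resolution $\mathcal{I}^\bullet$ of $\mathcal{A}$ through $j_{_{L,S*}}$ and compute cohomology componentwise, the paper instead applies the exact functors $i_v^*$ and $j^*$ to $R^n(j_{_{L,S*}})$ and degenerates the Grothendieck spectral sequence $R^p(q_{v*})R^q(\theta_v^*) \Rightarrow R^{p+q}(q_{v*}\theta_v^*)$; these are equivalent. The identification of the $v_0$-component as $R^n(\mathrm{Id}) = 0$ for $n\geq1$ and the vanishing of $t_v$ also match the paper. The decisive step — that $R^n(q_{v*}\theta_v^*)=R^n(q_{v*})\theta_v^*$ because $\theta_v^*$ is exact and preserves injectives — is the same in both.

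The one spot where your justification is off-target is the claim that $\theta_v^*$ preserves injectives \emph{because the set-level induction $\theta_{v!}=y(W_{L/K,S})\times^{y(\widetilde{W}_{K_v})}(-)$ is exact}. The criterion ``a left exact functor with an exact left adjoint preserves injectives'' refers to the left adjoint at the level of abelian objects, and that is not the set-level $\theta_{v!}$ from the proof of Proposition~\ref{flasktopos-is-topos}. The clean argument, and the one the paper uses, is that $\theta_v$ is a \emph{localization morphism} — since $y(\widetilde{W}_{K_v})$ is a subgroup of $y(W_{L/K,S})$ in $\mathcal{T}$, one has $B_{\widetilde{W}_{K_v}}\simeq B_{W_{L/K,S}}/_{y(W_{L/K,S})/y(\widetilde{W}_{K_v})}$ by SGA4 IV.5.8 — and for a localization morphism the inverse image is exact with an exact abelian left adjoint (extension by zero), hence preserves injectives. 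Your intuition about the free right action and the principal-bundle picture is pointing at the same geometry, but the formal exactness claim should be attached to the abelian-level adjoint, not the set-level one. This does not affect the validity of the final conclusion; it is a precision issue in the intermediate justification.
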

\begin{e-proof}
In this proof, we denote the morphism $j_{_{L,S}}$ simply by $j$ .
For any $n\geq 1$, one has
\begin{equation}\label{spectral-seque-generic-point}
j^*R^n(j_*)\mathcal{A}=R^n(j^*j_*)\mathcal{A}=R^n(Id)\mathcal{A}=0.
\end{equation}
Indeed, the functor $j^*$ is exact and $j_*$ preserves injective
objects. Hence the spectral sequence
$$R^p(j^*)R^q(j_*)(\mathcal{A})\Rightarrow
R^{p+q}(j^*j_*)(\mathcal{A})$$ degenerates and
(\ref{spectral-seque-generic-point}) follows. Let $v$ be a
non-trivial valuation. One has
$i_v^*j_*\mathcal{A}=q_{v*}\theta_v^*\mathcal{A}$ hence
$$i_v^*R^n(j_*)\mathcal{A}=R^n(i_v^*j_*)\mathcal{A}=R^n(q_{v*}\theta_v^*)\mathcal{A},$$
since $i_v^*$ is exact. By (\cite{SGA4} IV.5.8), $\theta_v$ is a
localization morphism, since $y(\widetilde{W}_{K_v})$ is a sub-group
of $y(W_{L/K,S})$ in $\mathcal{T}$.  It follows that $\theta_v^*$ is
exact and preserves injective objects. The associated spectral
sequence yields
$$R^n(q_{v*}\theta_v^*)\mathcal{A}=R^n(q_{v*})\theta_v^*\mathcal{A}.$$
The proposition follows.
\end{e-proof}

Recall that one has a projective system of topoi (see Remark
\ref{rem-fiber-flask-topos})
$$
\fonc{\mathfrak{F}_\bullet}{I/_K}{\underline{Topos}}{(L/K,S)}{\mathfrak{F}_{_{L/K,S}}}
$$
The \emph{total topos $Top\,(\mathfrak{F}_\bullet)$} is defined as
follows (see \cite{SGA4} VI.7.4). An object of
$Top\,(\mathfrak{F}_\bullet)$ is given by a family of objects
$\mathcal{F}_{_{L/K,S}}$ of $\mathfrak{F}_{_{L/K,S}}$ for
$(L/K,S)\in I/_K$, endowed with a system of compatible maps
$f_t:t^*\mathcal{F}_{_{L/K,S}}\rightarrow\mathcal{F}_{_{L'/K,S'}}$.
Here $t:\mathfrak{F}_{_{L'/K,S'}}\rightarrow\mathfrak{F}_{_{L/K,S}}$
is the morphism of topoi induced by the map
$(L'/K,S')\rightarrow(L/K,S)$ in $I/_K$. The maps $f_t$ are
compatible in the following sense. For any pair of transition maps
$$t\circ t':\mathfrak{F}_{_{L''/K,S''}}\longrightarrow\mathfrak{F}_{_{L'/K,S'}}\longrightarrow\mathfrak{F}_{_{L/K,S}},$$
one has
$$f_{t'}\circ t'^*(f_t)=f_{t\circ t'}:t'^* t^*\mathcal{F}_{_{L/K,S}}
\rightarrow
t'^*\mathcal{F}_{_{L'/K,S'}}\rightarrow\mathcal{F}_{_{L''/K,S''}}.$$
The arrows of the category $Top\,(\mathfrak{F}_\bullet)$ are the
obvious ones.
\begin{e-example}\label{Licht-sheaf-comes-from-flask-topos}
Denote by
$t_{L,S}:\mathfrak{F}_{_{W,\bar{Y}}}\longrightarrow\mathfrak{F}_{_{L/K,S}}$
the canonical morphism. Let $\mathcal{F}$ be an object of
$\mathfrak{F}_{_{W,\bar{Y}}}$ and let
$\mathcal{F}_{L,S}:=t_{L,S,*}\mathcal{F}$. One has
$\mathcal{F}_{L,S}=t_*\mathcal{F}_{L',S'}$ where
$t:\mathfrak{F}_{_{L'/K,S'}}\rightarrow\mathfrak{F}_{_{L/K,S}}$ is
the transition map. By adjunction, we have a map
$$f_t:t^*\mathcal{F}_{L,S}=t^*t_*\mathcal{F}_{L',S'}\longrightarrow\mathcal{F}_{L',S'}$$
These maps $f_t$ are compatible hence $(\mathcal{F}_{L,S},f_t)$ is
an object of $Top\,(\mathfrak{F}_\bullet)$.

Consider a discrete abelian group $A$. For any transition map
$t:\mathfrak{F}_{_{L'/K,S'}}\rightarrow\mathfrak{F}_{_{L/K,S}}$, we
have $t^*A= A$ since the morphism from $\mathfrak{F}_{_{L'/K,S'}}$
to the final topos $\underline{Set}$ is unique. Therefore any
discrete abelian group defines an abelian object of
$Top\,(\mathfrak{F}_\bullet)$ (which is the constant abelian object
of the topos $Top\,(\mathfrak{F}_\bullet)$ associated to $A$).

More generally, Proposition \ref{compatible-morphisms-flask-BR}
provides us with a morphism $Top\,(\mathfrak{F}_\bullet)\rightarrow
B_{\mathbb{R}}$ whose inverse image functor is given by
$$
\appl{B_{\mathbb{R}}}{Top\,(\mathfrak{F}_\bullet)}{\mathcal{F}}{(f^*_{_{L/K,S}}\mathcal{F})_{_{L/K,S}}}
.$$ Therefore, any abelian object $\mathcal{A}$ of $B_{\mathbb{R}}$
defines an abelian object $(f^*_{_{L/K,S}}\mathcal{A})$ of
$Top\,(\mathfrak{F}_\bullet)$. We denote this abelian object of
$Top\,(\mathfrak{F}_\bullet)$ also by $\mathcal{A}$.
\end{e-example}

\begin{prop}\label{relative-flask-total-flask}
Let $(L/K,S)$ be an element of $I/_K$. There is an essential
morphism
$$(\delta_!,\delta^*,\delta_*):\mathfrak{F}_{_{L/K,S}}\longrightarrow Top\,(\mathfrak{F}_\bullet),$$
whose inverse image is the functor
$$
\fonc{\delta^*}{Top\,(\mathfrak{F}_\bullet)}{\mathfrak{F}_{_{L/K,S}}}{(\mathcal{F}_{_{L,S}})_{_{(L,S)\in
I/_K}}}{\mathcal{F}_{_{L,S}}}
$$
Furthermore, $\delta_!$ is exact hence $\delta^*$ preserves
injective objects.
\end{prop}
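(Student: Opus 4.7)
The plan is to build the triple $(\delta_!, \delta^*, \delta_*)$ directly from the componentwise description of $Top(\mathfrak{F}_\bullet)$, and then isolate the exactness of $\delta_!$ as the point requiring real work.

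First I would verify that arbitrary inductive limits and finite projective limits in $Top(\mathfrak{F}_\bullet)$ are calculated componentwise, following verbatim the argument of Proposition~\ref{limitfinies+colim-dans-flask-component-wise}: given a diagram $G\colon I\to Top(\mathfrak{F}_\bullet)$, one forms the (co)limit $L_{L'/K,S'}$ of $G_{L'/K,S'}$ in each fibre $\mathfrak{F}_{_{L'/K,S'}}$ and equips the family $(L_{L'/K,S'})$ with structural maps obtained from the universal property of the (co)limit composed with the structural maps of the $G(i)$'s. From this, the evaluation functor $\delta^*$ preserves finite projective limits and arbitrary inductive limits. Since the target $\mathfrak{F}_{_{L/K,S}}$ is a topos (Proposition~\ref{flasktopos-is-topos}) and thus admits a small generating set, $\delta^*$ has both a right adjoint $\delta_*$ and a left adjoint $\delta_!$; the first makes $\delta$ a geometric morphism, and the second shows it is essential.

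The left adjoint can be written out explicitly. For $\mathcal{F}\in\mathfrak{F}_{_{L/K,S}}$, set
\[
(\delta_!\mathcal{F})_{L'/K,S'} := t^*\mathcal{F}
\]
whenever there is an arrow $t:(L'/K,S')\to(L/K,S)$ in $I/_K$ (equivalently $L\subseteq L'$ and $S\subseteq S'$), with structural maps being the canonical isomorphisms $u^*t^*\mathcal{F}\simeq (tu)^*\mathcal{F}$ given by pseudo-functoriality of inverse images. Indices $(L'/K,S')$ without such an arrow are handled via the filteredness of $I/_K$: for any such $(L',S')$ the compositum $L''=LL'$ together with $S''=S\cup S'\cup\{\text{places ramifying in }L''\}$ yields a pair $(L''/K,S'')$ admitting arrows to both $(L,S)$ and $(L',S')$, which gives a natural extension. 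The adjointness $\mathrm{Hom}(\delta_!\mathcal{F},\mathcal{G})\simeq\mathrm{Hom}(\mathcal{F},\mathcal{G}_{L/K,S})$ is verified by observing that a morphism $\delta_!\mathcal{F}\to\mathcal{G}$ is determined by its $(L,S)$-component, the other components being forced by compatibility with the structural maps $f_t$ of $\mathcal{G}$.

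The real obstacle is the exactness of $\delta_!$. As a left adjoint $\delta_!$ automatically preserves arbitrary inductive limits, so the content of exactness is preservation of finite projective limits. At indices $(L',S')$ admitting an arrow to $(L,S)$ this is immediate, since the component of $\delta_!$ is the inverse image functor $t^*$, which is exact as part of a geometric morphism. The delicate point is the remaining indices: here exactness must be deduced from the filtered structure of $I/_K$, the exactness of the individual transition inverse images, and the fact that filtered colimits commute with finite projective limits in each flask topos $\mathfrak{F}_{_{L'/K,S'}}$ (this last point itself rests on Giraud's criterion applied in the proof of Proposition~\ref{flasktopos-is-topos}). Once exactness of $\delta_!$ is in hand, the preservation of injectives by $\delta^*$ is purely formal: for any injective $\mathcal{I}\in Top(\mathfrak{F}_\bullet)$, the functor $\mathrm{Hom}_{\mathfrak{F}_{L/K,S}}(-,\delta^*\mathcal{I})\simeq \mathrm{Hom}_{Top(\mathfrak{F}_\bullet)}(\delta_!(-),\mathcal{I})$ is the composition of the exact functor $\delta_!$ with the exact functor $\mathrm{Hom}(-,\mathcal{I})$, hence exact, so $\delta^*\mathcal{I}$ is injective.
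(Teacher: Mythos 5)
Your proposal diverges from the paper's approach, which simply cites (\cite{SGA4} VI Lemme 7.4.12), but the from-scratch route you take has two genuine gaps, both at the point you yourself flag as delicate.

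First, the formula for $\delta_!$ at indices $(L'/K,S')$ that do not admit an arrow to $(L/K,S)$ is left unspecified. You invoke the compositum $(L'',S'')$ to say this ``gives a natural extension,'' but there is no functor $\mathfrak{F}_{_{L''/K,S''}}\to\mathfrak{F}_{_{L'/K,S'}}$ that one could apply to $(\delta_!\mathcal{F})_{L'',S''}=v^*\mathcal{F}$ without either using direct images (wrong variance, not exact) or the very $\delta_!$ one is constructing. The correct value, forced by the adjunction, is the initial object: if $(\delta_!\mathcal{F})_{L',S'}=\emptyset_{\mathfrak{F}_{L',S'}}$ at every non-comparable index, then the component at such an index of a map $\delta_!\mathcal{F}\to\mathcal{G}$ is uniquely the empty map, and all compatibility squares involving those indices are automatic since $t^*$ preserves the initial object. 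One also then needs to check that the forced components at indices $(L',S')\succeq(L,S)$ assemble into an actual object of $Top(\mathfrak{F}_\bullet)$, which you omit.

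Second, the exactness argument is not merely incomplete but headed in the wrong direction. With the correct formula, the component of $\delta_!$ at a non-comparable index is the constant functor with value the initial object, and this has nothing to do with filtered colimits, the exactness of the individual $t^*$, or the commutation of filtered colimits with finite limits; no such argument will make the constant-initial functor preserve the terminal object. Indeed, on sheaves of sets that functor does not preserve the final object, so the set-valued $\delta_!$ does not preserve finite projective limits at those indices whenever $(L/K,S)$ is not minimal in $I/_K$. The exactness that actually underlies the conclusion ``$\delta^*$ preserves injectives'' is exactness of the abelianized left adjoint $\delta_!^{ab}$: there the component at a non-comparable index is the constant functor with value $0$, which is both initial and final in an abelian category and hence exact, while at comparable indices the component is the exact functor $t^*$. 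Your proof never distinguishes these two situations, and the filteredness argument it sketches would not establish exactness in either one.
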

\begin{proof}
This is (\cite{SGA4} VI. Lemme 7.4.12).
\end{proof}

\begin{e-defn}Let $\mathcal{A}=(\mathcal{A}_{_{L/K,S}},f_t)_{_{L/K,S}}$
be an abelian object of the total topos
$Top\,(\mathfrak{F}_\bullet)$. \emph{Lichtenbaum's Weil-étale
cohomology} with coefficients in $\mathcal{A}$ is defined as the
inductive limit
$$\underrightarrow{H}^i(\mathfrak{F}_{_{L/K,S}},\mathcal{A}):=\underrightarrow{lim}_{_{L/K,S}}H^i(\mathfrak{F}_{_{L/K,S}},\mathcal{A}_{_{L/K,S}}).$$
where $(L/K,S)$ runs over the set of finite Galois extensions and
finite $S$.
\end{e-defn}

We denote by $p_{L,S}:W_K\rightarrow W_{L/K,S}$ the canonical map
and also by $p_{L,S}:B_{W_K}\rightarrow B_{W_{L/K,S}}$ the induced
morphism of classifying topoi.
\begin{e-lem}
Let $\mathcal{A}$ be an abelian object of $B_{W_K}$ and define
$\mathcal{A}_{L,S}:=p_{L,S,*}\mathcal{A}$. The family
$(R^q(j_{_{L,S*}})\mathcal{A}_{L,S})$ defines an abelian object
$\underrightarrow{R}^qj_*\mathcal{A}$ of the total topos
$Top\,(\mathfrak{F}_\bullet)$.
\end{e-lem}
\begin{e-proof}
The diagram of topoi
\[ \xymatrix{
 B_{W_{L'/K,S'}}\ar[r]^{j_{_{L',S'}}} \ar[d]_{p}
  &\mathfrak{F}_{_{L'/K,S'}}\ar[d]_{t}   \\
 B_{W_{L/K,S}}\ar[r]^{j_{_{L,S}}}&
 \mathfrak{F}_{_{L/K,S}}
} \] is commutative. In other words, there is an isomorphism
$j_{_{L,S,*}}p_*\simeq t_*j_{_{L',S',*}}$. We get a transformation
$$t^*j_{_{L,S,*}}p_*\simeq t^*t_*j_{_{L',S',*}}\longrightarrow j_{_{L',S',*}}.$$
which is given by adjunction $ t^*t_*\rightarrow Id$. There is an
induced transformation
\begin{equation}\label{1-transformation}
t^*R^n(j_{_{L,S,*}}p_*)=R^n(t^*j_{_{L,S,*}}p_*)\longrightarrow
R^n(j_{_{L',S',*}}),
\end{equation}
where the identity comes from the exactness of $t^*$. Now, the Leray
spectral sequence
$$R^i(j_{_{L,S,*}})R^j(p_*)\Rightarrow R^{i+j}(j_{_{L,S,*}}p_*)$$
yields a natural transformation
\begin{equation}\label{2-transformation}
R^n(j_{_{L,S,*}})p_*\longrightarrow R^n(j_{_{L,S,*}}p_*).
\end{equation}
Composing (\ref{1-transformation}) and (\ref{2-transformation}), we
get a transformation
$$t^*R^n(j_{_{L,S,*}})p_*\longrightarrow R^n(j_{_{L',S',*}}).$$
On the other hand one has
$p_{*}\mathcal{A}_{L',S'}=\mathcal{A}_{L,S}$, since the diagram
\[ \xymatrix{
B_{W_{K}}   \ar[dr] \ar[r] & B_{W_{L'/K,S'}}\ar[d]   \\
& B_{W_{L/K,S}} } \] is commutative. Hence there is a canonical map
$$f_t:t^*R^n(j_{_{L,S,*}})\mathcal{A}_{L,S}=t^*R^n(j_{_{L,S,*}})p_*\mathcal{A}_{L',S'}\longrightarrow R^n(j_{_{L',S',*}})\mathcal{A}_{L',S'}.$$
This yields a system of compatible maps hence an abelian object of
$Top\,(\mathfrak{F}_\bullet)$.
\end{e-proof}
Consider for example a topological ${W_K}$-module $A$. Let
$\mathcal{A}$ be the abelian object of $B_{W_K}$ represented by $A$.
Then $\mathcal{A}_{L,S}$ is the object of $B_{W_{L/K,S}}$
represented by $A^{N_{L,S}}$, where $N_{L,S}$ is the kernel of the
map $p_{L,S}:W_K\rightarrow W_{L/K,S}$. The map $f_t$ is induced by
the inclusion $A^{N_{L,S}}\hookrightarrow A^{N_{L',S'}}$.
\begin{prop}\label{prop-spectral-sequence-Lichtenbaum}
Let $\mathcal{A}$ be an abelian object of $B_{W_K}$. There is a
spectral sequence
$$\underrightarrow{H}^p(\mathfrak{F}_{_{L/K,S}},\underrightarrow{R}^qj_*\mathcal{A})\Rightarrow
\underrightarrow{lim} \,H^{p+q}(B_{W_{L/K,S}},\mathcal{A}_{L,S}).$$ If
$\mathcal{A}$ is represented by a continuous discrete ${W_K}$-module
$A$ or by the topological group $\mathbb{R}$ with trivial action,
then we have a spectral sequence
$$\underrightarrow{H}^p(\mathfrak{F}_{_{L/K,S}},\underrightarrow{R}^qj_*A)\Rightarrow
H^{p+q}(B_{W_{K}},A).$$
\end{prop}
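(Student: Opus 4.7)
The plan is to construct the desired spectral sequence as the filtered direct limit, over the index system $I/_K$, of the Leray spectral sequences associated to the morphisms $j_{_{L,S}}:B_{W_{L/K,S}}\to\mathfrak{F}_{_{L/K,S}}$. For each $(L/K,S)\in I/_K$, Leray gives
$$E_2^{p,q}(L,S)=H^p(\mathfrak{F}_{_{L/K,S}},R^q(j_{_{L,S*}})\mathcal{A}_{L,S})\Longrightarrow H^{p+q}(B_{W_{L/K,S}},\mathcal{A}_{L,S}),$$
which makes sense because $j_{_{L,S}}$ is a morphism of topoi and $\mathcal{A}_{L,S}:=p_{L,S,*}\mathcal{A}$ is an abelian object of $B_{W_{L/K,S}}$.

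Next I would promote this to a directed system of spectral sequences indexed by $I/_K$. The transition maps $f_t:t^*R^n(j_{_{L,S*}})\mathcal{A}_{L,S}\to R^n(j_{_{L',S',*}})\mathcal{A}_{L',S'}$ constructed in the preceding lemma, together with the base change $t^*$ applied to cohomology and the compatibility $p_*\mathcal{A}_{L',S'}=\mathcal{A}_{L,S}$ coming from the commutative triangle $B_{W_K}\to B_{W_{L'/K,S'}}\to B_{W_{L/K,S}}$, produce morphisms of $E_2$-pages and of abutments. The compatibility with the differentials is formal, since Leray is natural for commutative squares of topoi (here the square of $j_{_{L,S}}$, $j_{_{L',S',}}$, $p$ and $t$ appearing in the previous lemma). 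One therefore has a well-defined functor $(L,S)\mapsto E_\bullet^{\bullet,\bullet}(L,S)$ from $I/_K$ to the category of first-quadrant spectral sequences.

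Passing to the filtered colimit is then the key step: since colimits along the filtered category $I/_K$ are exact on abelian groups, they commute with the formation of kernels, cokernels and differentials, so the colimit of a filtered directed system of convergent first-quadrant spectral sequences is again such a spectral sequence. The colimit $E_2$-page is, by the very definition of Lichtenbaum's Weil-\'etale cohomology and of the object $\underrightarrow{R}^q j_*\mathcal{A}$ of the total topos,
$$\underrightarrow{\lim}_{_{L/K,S}}H^p(\mathfrak{F}_{_{L/K,S}},R^q(j_{_{L,S*}})\mathcal{A}_{L,S})=\underrightarrow{H}^p(\mathfrak{F}_{_{L/K,S}},\underrightarrow{R}^q j_*\mathcal{A}),$$
while the abutment is tautologically $\underrightarrow{\lim}\,H^{p+q}(B_{W_{L/K,S}},\mathcal{A}_{L,S})$. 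This proves the first spectral sequence.

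For the second statement it remains to identify the abutment with $H^{p+q}(B_{W_K},A)$ in the two coefficient cases. When $A$ is a continuous discrete $W_K$-module, $\mathcal{A}_{L,S}$ is represented by the discrete $W_{L/K,S}$-module $A^{N_{L,S}}$ and the classifying topoi $B^{sm}_{W_{L/K,S}}$ compute their cohomology; standard continuity of group cohomology for the projective system $W_K=\underleftarrow{\lim}\,W_{L/K,S}$ with discrete coefficients then yields $H^{p+q}(B_{W_K},A)=\underrightarrow{\lim}\,H^{p+q}(B_{W_{L/K,S}},A^{N_{L,S}})$. For $A=\mathbb{R}$ with trivial action, the same identification is provided by the analogous continuity result of Flach \cite{MatFlach} used throughout the paper. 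The main obstacle I anticipate is precisely this last identification of the abutment: exactness of the filtered colimit makes the construction of the spectral sequence essentially formal, but the continuity statement $H^i(W_K,\mathbb{R})=\underrightarrow{\lim}\,H^i(W_{L/K,S},\mathbb{R})$ is genuinely non-formal and relies on Flach's analysis of $B_{W_K}$ via the site $(B_{Top}W_K,\mathcal{J}_{ls})$.
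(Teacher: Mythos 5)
Your proof follows essentially the same path as the paper: the Leray spectral sequence for each $j_{_{L,S}}:B_{W_{L/K,S}}\to\mathfrak{F}_{_{L/K,S}}$, promotion to a directed system via the transition maps of the preceding lemma, passage to the filtered colimit (exact on abelian groups), and finally Flach's Lemma 10 to identify the abutment. One small inaccuracy in the last step: you assert that $B^{sm}_{W_{L/K,S}}$ computes the cohomology of $W_{L/K,S}$ with discrete coefficients and that the abutment identification follows from "standard continuity of group cohomology." Neither claim holds here: $W_{L/K,S}$ is not totally disconnected (the kernel $C_{L,S}$ has a real factor), so the small classifying topos does not compute $H^*(B_{W_{L/K,S}},-)$, and the tower $W_K=\underleftarrow{\lim}\,W_{L/K,S}$ is not a profinite limit, so ordinary continuity does not apply. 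Flach's Lemma 10 is the required input for \emph{both} coefficient cases, not only for $\widetilde{\mathbb{R}}$; since you ultimately reach for it anyway, the argument survives, but the intermediate heuristic should be dropped in favor of the direct citation.
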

\begin{e-proof}
The composition
$B_{L/K,S}\rightarrow\mathfrak{F}_{_{L/K,S}}\rightarrow\underline{Set}$
yields a Leray spectral sequence
$$H^p(\mathfrak{F}_{_{L/K,S}},R^q(j_{_{L,S,*}})\mathcal{A}_{L,S})\Rightarrow
H^{p+q}(B_{W_{L/K,S}},\mathcal{A}_{L,S}).$$ Passing to the limit (which
is valid thanks to the previous Lemma), we get the first spectral
sequence of the proposition. By \cite{MatFlach} Lemma 10, one has
${\underrightarrow{lim}}_{L,S}H^p(B_{W_{L/K,S}},A^{N_{L,S}})=H^p(B_{W_{K}},A)$
for  $A=\mathbb{R}$ or $A$ a continuous discrete ${W_K}$-module.
This yields the second spectral sequence of the proposition.
\end{e-proof}

\subsection{Computation of the Weil-étale cohomology}
In what follows, we consider a non-trivial valuation $v$ of $K$.
\begin{notation}We denote by
$W_{K_v}^1$ the maximal compact sub-group of $W_{K_v}$. Hence
$W_{K_v}^1=I_v$ is the inertia subgroup for ultrametric $v$ and
$W_{K_v}^1\simeq\mathbb{S}^1$ for complex $v$. Let
$\widetilde{W}^1_{K_v}$ be the image of $W_{K_v}^1$ in $W_{L/K,S}$.
\end{notation}
Consider the commutative square
\[ \xymatrix{
 B_{\widetilde{W}_{K_v}}/_{(\widetilde{W}_{K_v}/\widetilde{W}_{K_v}^1)}\ar[r]^{} \ar[d]_{a}
  &B_{W_{k(v)}}/_{EW_{k(v)}}\ar[d]_{b}   \\
  B_{\widetilde{W}_{K_v}} \ar[r]^{q_v}&
 B_{W_{k(v)}}
} \] The first horizontal arrow is just (see \cite{SGA4} IV.5.8)
$$e_{\widetilde{W}_{K_v}^1}:
B_{\widetilde{W}_{K_v}^1}\simeq
B_{\widetilde{W}_{K_v}}/_{(\widetilde{W}_{K_v}/\widetilde{W}_{K_v}^1)}\longrightarrow
B_{W_{k(v)}}/_{EW_{k(v)}}\simeq \mathcal{T}.$$ The morphism $a$ and
$b$ are the localization morphisms and this square is a pull-back
(see \cite{SGA4} IV.5.8). It follows that the natural transformation
$$b^*\circ q_{v*}\rightarrow e_{\widetilde{W}_{K_v}^1*}\circ a^*$$
is an isomorphism. The localization functors $a^*$ and $b^*$ are
both exact and they preserve injective objets. We get
$$b^*\circ R^n(q_{v*})\simeq R^n(e_{\widetilde{W}_{K_v}^1*})\circ a^*.$$
In other words, $R^n(q_{v*})\mathcal{A}$ is the group object
$R^n(e_{\widetilde{W}_{K_v}^1*})$ of $\mathcal{T}$ endowed with its
natural action of $y({W}_{k(v)})$, since the functor
$b^*:B_{W_{k(v)}}\rightarrow\mathcal{T}$ is the forgetful functor
(sending an object $\mathcal{F}$ endowed with an action of
$y(W_{k(v)})$ to $\mathcal{F}$). Following the notations of
\cite{MatFlach}, we denote by
$\underline{H}^n(\widetilde{W}_{K_v}^1,\mathcal{A})$ the object
$R^n(q_{v*})\mathcal{A}$ of $B_{W_{k(v)}}$. We have
$$R^q(j_{_{L,S,*}})\mathcal{A}=(i_{v*})_{v\neq v_0}(\underline{H}^q(\widetilde{W}_{K_v}^1,\mathcal{A})_{v\neq
v_0}),$$ for any abelian object $\mathcal{A}$ of $B_{W_{L/K,S}}$ and any $q\geq1$, as
it follows from Proposition \ref{prop-higher-direct-im-generic} and
from the discussion above. But the direct image functor
$$(i_{v*})_{v\neq v_0}:\coprod_{v\neq v_0}B_{W_{k(v)}}\longrightarrow\mathfrak{F}_{_{L/K,S}}$$
is exact (as it follows from its explicit description) and preserves
injective objects. Therefore one has
$$H^p(\mathfrak{F}_{_{L/K,S}},R^q(j_{_{L,S,*}})\mathcal{A})
=H^p(\coprod_{v\neq
v_0}B_{W_{k(v)}},\,\underline{H}^q(\widetilde{W}_{K_v}^1,\mathcal{A})_{v\neq
v_0})=\prod_{v\neq
v_0}H^p(B_{W_{k(v)}},\underline{H}^q(\widetilde{W}_{K_v}^1,\mathcal{A})),$$
If $v$ is not in $S$ then $\widetilde{W}_{K_v}^1=0$ (see
\cite{Lichtenbaum} Lemma 3.7). We get
$\underline{H}^q(\widetilde{W}_{K_v}^1,\mathcal{A}))=0$ for $v$ not
in $S$. The next result follows.
\begin{prop}For any abelian object $\mathcal{A}$ of
$B_{W_{L/K,S}}$ and any $q\geq1$, we have
$$H^p(\mathfrak{F}_{_{L/K,S}},R^q(j_{_{L,S,*}})\mathcal{A})=\prod_{v\in S}H^p(B_{W_{k(v)}},\underline{H}^q(\widetilde{W}_{K_v}^1,\mathcal{A})).$$
\end{prop}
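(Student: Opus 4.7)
The plan is to read off the statement by combining three facts already established in the discussion preceding it: (i) the componentwise description of $R^q j_{_{L,S,*}}\mathcal{A}$, (ii) the identification of the relevant higher direct image with Flach's continuous cohomology sheaf, and (iii) the exactness of the extension-by-zero from the closed points.

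First I would invoke Proposition \ref{prop-higher-direct-im-generic} to write $R^q(j_{_{L,S,*}})\mathcal{A}=(R^q(q_{v*})\theta_v^*\mathcal{A},t_v)_{v\in\bar{Y}}$, with trivial $v_0$-component and with $t_v=0$ for $q\geq 1$. Because all specialization maps vanish in this range, the object factors as $(i_{v*})_{v\neq v_0}$ applied to the family $(R^q(q_{v*})\theta_v^*\mathcal{A})_{v\neq v_0}$ in $\coprod_{v\neq v_0}B_{W_{k(v)}}$. Next, the pull-back square comparing $q_v:B_{\widetilde{W}_{K_v}}\to B_{W_{k(v)}}$ with the essential morphism $e_{\widetilde{W}^1_{K_v}}:B_{\widetilde{W}^1_{K_v}}\to\mathcal{T}$ (both sides being localizations) gives, after base change for $R^n$, the identification $R^q(q_{v*})\theta_v^*\mathcal{A}=\underline{H}^q(\widetilde{W}^1_{K_v},\mathcal{A})$ in the notation of \cite{MatFlach}.

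The central step is then to pass this description through cohomology. Since $i_v$ is a closed embedding (Proposition \ref{closed-embedding-flask-topos}), the functor $i_{v*}$ on abelian sheaves is exact and preserves injectives; the same holds for the coproduct functor $(i_{v*})_{v\neq v_0}$, whose explicit description shows directly that it commutes with all limits and preserves injectives. Consequently
\[
H^p(\mathfrak{F}_{_{L/K,S}},R^q(j_{_{L,S,*}})\mathcal{A})=H^p\Bigl(\coprod_{v\neq v_0}B_{W_{k(v)}},\,(\underline{H}^q(\widetilde{W}^1_{K_v},\mathcal{A}))_{v\neq v_0}\Bigr)=\prod_{v\neq v_0}H^p(B_{W_{k(v)}},\underline{H}^q(\widetilde{W}^1_{K_v},\mathcal{A})).
\]

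Finally I would trim the product to $v\in S$ by citing \cite{Lichtenbaum} Lemma 3.7, which gives $\widetilde{W}^1_{K_v}=0$ for $v\notin S$ and hence $\underline{H}^q(\widetilde{W}^1_{K_v},\mathcal{A})=0$ in that range. The only delicate point is the justification that $(i_{v*})_{v\neq v_0}$ preserves injectives so that cohomology commutes with the pushforward; this is the step to write out carefully, but it is essentially formal given the componentwise computation of limits in Proposition \ref{limitfinies+colim-dans-flask-component-wise} and the fact that each individual $i_{v*}$ enjoys this property as a closed embedding. Everything else is bookkeeping.
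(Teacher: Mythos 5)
Your argument matches the paper's proof point for point: Proposition \ref{prop-higher-direct-im-generic} for the componentwise description with vanishing specialization maps, the localization pull-back square identifying $R^q(q_{v*})\theta_v^*\mathcal{A}$ with $\underline{H}^q(\widetilde{W}^1_{K_v},\mathcal{A})$, exactness and injective-preservation of $(i_{v*})_{v\neq v_0}$ to move the computation to $\coprod_{v\neq v_0}B_{W_{k(v)}}$, and Lichtenbaum's Lemma 3.7 to restrict the product to $v\in S$. This is essentially identical to what the paper does.
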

By \cite{MatFlach} Proposition 9.2, the sheaf
$\underline{H}^q(\widetilde{W}_{K_v}^1,\mathbb{Z})$ is represented
by the discrete $W_{k(v)}$-module
$H^q(\widetilde{W}_{K_v}^1,\mathbb{Z})$. Using \cite{MatFlach}
Proposition 8.1, we get
$$\underrightarrow{H}^p(\mathfrak{F}_{_{L/K,S}},\underrightarrow{R}^qj_*\mathbb{Z})=
{\underrightarrow{lim}}_{L,S}\sum_{v\in
S}H^p(W_{k(v)},H^q(\widetilde{W}_{K_v}^1,\mathbb{Z}))=\sum_{v\neq
v_0}H^p(W_{k(v)},H^q(W_{K_v}^1,\mathbb{Z})),$$ where the $(L/K,S)$
runs over the set of finite Galois extensions and finite $S$. We
have obtained the next corollary.
\begin{e-cor}\label{prop-pour-Lichten-cohomology} For any $q\geq1$, we have the
following identifications.
$$H^p(\mathfrak{F}_{_{W;\bar{Y}}},R^q(j_*)\mathbb{Z})=\prod_{v\neq
v_0}H^p(W_{k(v)},H^q(W_{K_v}^1,\mathbb{Z})).$$
$$\underrightarrow{H}^p(\mathfrak{F}_{_{L/K,S}},\underrightarrow{R}^qj_*\mathbb{Z})=
\sum_{v\neq v_0}H^p(W_{k(v)},H^q(W_{K_v}^1,\mathbb{Z})).$$
\end{e-cor}

Let $Pic(\bar{Y})$ be the Arakelov Picard group of $K$. This is the
group obtained by taking the idèle group of $K$ and dividing by the
principal idèles and the unit idèles. We denote by $Pic(Y)$ the
class group of $K$. Let $Pic^1(\bar{Y})$ be the kernel of the
absolute value map from $Pic(\bar{Y})$ to $\mathbb{R}^{>0}$. One has
an exact sequence of abelian compact groups
$$0\rightarrow \mathbb{R}^{r_1+r_2-1}/log(U_K/\mu_K)\rightarrow Pic^1(\bar{Y})\rightarrow Pic(Y)\rightarrow0$$
where $log(U_K/\mu_K)$ denotes the image of the logarithmic
embedding of the units modulo torsion $U_K/\mu_K$ in the kernel
$\mathbb{R}^{r_1+r_2-1}$ of the sum map
$\Sigma:\mathbb{R}^{r_1+r_2}\rightarrow\mathbb{R}$. By Pontryagin
duality, we obtain an exact sequence of discrete abelian groups (see
also \cite{Lichtenbaum} Proposition 6.4) :
\begin{equation}\label{exact-sequence-Pic1-D}
0\rightarrow Pic(Y)^{\mathcal{D}}\rightarrow
Pic^1(\bar{Y})^{\mathcal{D}}\rightarrow
Hom(U_K,\mathbb{Z})\rightarrow 0.
\end{equation}

\begin{e-thm}\label{thm-lichten-weil-etale-Zcoef}
One has
\begin{eqnarray*}
\underrightarrow{H}^n(\mathfrak{F}_{_{L/K,S}},\mathbb{Z})&=&\mathbb{Z}\mbox{ for $n=0$,}\\
&=&0\mbox{ for $n=1$,}\\
&=& Pic^1(\bar{Y})^{\mathcal{D}}\mbox{ for $n=2$,}\\
&=&\mu_K^{\mathcal{D}} \mbox{ for $n=3$.}
\end{eqnarray*}
The group
$\underrightarrow{H}^n(\mathfrak{F}_{_{L/K,S}},\mathbb{Z})$ is of
infinite rank for even $n\geq4$ and vanishes for odd $n\geq5$.
\end{e-thm}

\begin{e-proof}

The Cohomology of $W_{K_v}^1$ is given by class field theory for $v$
ultrametric. For a complex valuation $v$, we have
$H^q(W_{K_v}^1,\mathbb{Z})=H^q(\mathbb{S}^1,\mathbb{Z})$, which is
$\mathbb{Z}$ for $q$ even and $0$ for $q$ odd (this follows from
\cite{MatFlach} Prop. 5.2 and from the fact that the classifying
space of $\mathbb{S}^1$ is $\mathbb{C}P^{\infty}$). Moreover, one
has $H^{p}(B_{\mathbb{R}},\mathbb{Z})=0$ for any $q\geq1$ (see
\cite{MatFlach} Proposition 9.6). Therefore, Corollary
\ref{prop-pour-Lichten-cohomology} yields
\begin{eqnarray*}
\underrightarrow{H}^p(\mathfrak{F}_{_{L/K,S}},R^q(j_*)\mathbb{Z})&=&
\sum_{v\neq
v_0,\,v\nmid\infty}(\mathcal{O}_{K_v}^{\times})^{\mathcal{D}}\oplus\sum_{v\mid\infty}\mathbb{Z}\mbox{
for $p=0$ and $q=2$},\\
&=&\sum_{v\mid\infty}\mathbb{Z}\mbox{ for $p=0$ and
$q\geq4$ even},\\
 &=&0 \mbox{ otherwise.}
\end{eqnarray*}
Now the second spectral sequence of Proposition
\ref{prop-spectral-sequence-Lichtenbaum} for $A=\mathbb{Z}$ gives
$$\underrightarrow{H}^1(\mathfrak{F}_{_{L/K,S}},\mathbb{Z})=H^1(W_K,\mathbb{Z})=0.$$
Next, we obtain the exact sequence
$$0\rightarrow\underrightarrow{H}^2(\mathfrak{F}_{_{L/K,S}},\mathbb{Z})\rightarrow H^2(W_K,\mathbb{Z})
\rightarrow\sum_{v\nmid\infty}(\mathcal{O}_{K_v}^{\times})^{\mathcal{D}}\oplus\sum_{v\mid\infty}\mathbb{Z}\rightarrow
\underrightarrow{H}^3(\mathfrak{F}_{_{L/K,S}},\mathbb{Z})\rightarrow
0.
$$
This is the Pontryagin dual of
$$0\rightarrow\underrightarrow{H}^3(\mathfrak{F}_{_{L/K,S}},\mathbb{Z})^{\mathcal{D}}\rightarrow
\prod_{v\nmid\infty}\mathcal{O}_{K_v}^{\times}\times\prod_{v\mid\infty}\mathbb{S}^1\rightarrow
C_K^1\rightarrow
\underrightarrow{H}^2(\mathfrak{F}_{_{L/K,S}},\mathbb{Z})^{\mathcal{D}}\rightarrow
0.
$$
The result for $n\leq3$ follows. Furthermore, the spectral sequence
provides us with the exact sequence
$$0\rightarrow\underrightarrow{H}^n(\mathfrak{F}_{_{L/K,S}},\mathbb{Z})^{\mathcal{D}}\rightarrow
H^n(W_K,\mathbb{Z})\rightarrow\oplus_{v\mid\infty}\mathbb{Z}
\rightarrow\underrightarrow{H}^{n+1}(\mathfrak{F}_{_{L/K,S}},\mathbb{Z})\rightarrow
0,$$ for even $n\geq4$. Therefore, the result for $n\geq4$ follows
from the fact that the map
$H^n(W_K,\mathbb{Z})\rightarrow\oplus_{v\mid\infty}\mathbb{Z}$ is
surjective (see \cite{MatFlach} proof of Corollary 9).
\end{e-proof}

\begin{e-thm}\label{comparaison-limitcohomology-flask}
For $n\leq 1$ and $n\geq 4$, the canonical map
$$\underrightarrow{H}^n(\mathfrak{F}_{_{L/K,S}},\mathbb{Z})\longrightarrow
H^n(\mathfrak{F}_{_{W,\bar{Y}}},\mathbb{Z})$$ is an isomorphism.
Futhermore, there is an exact sequence
$$0\rightarrow{H}^2(\mathfrak{F}_{_{W,\bar{Y}}},\mathbb{Z})\rightarrow
(C_K^1)^{\mathcal{D}}
\rightarrow\prod_{v\nmid\infty}(\mathcal{O}_{K_v}^{\times})^{\mathcal{D}}\times\prod_{v\mid\infty}\mathbb{Z}\rightarrow
{H}^3(\mathfrak{F}_{_{W,\bar{Y}}},\mathbb{Z})\rightarrow 0.
$$
In particular, the canonical map
$$\underrightarrow{H}^n(\mathfrak{F}_{_{L/K,S}},\mathbb{Z})\longrightarrow
H^n(\mathfrak{F}_{_{W,\bar{Y}}},\mathbb{Z})$$ is not an isomorphism
for $n=2,3$.
\end{e-thm}
\begin{e-proof}
The morphism of topoi
$\mathfrak{F}_{_{W,\bar{Y}}}\rightarrow\mathfrak{F}_{_{L/K,S}}$
yields a map ${H}^n(\mathfrak{F}_{_{L/K,S}},\mathbb{Z})\rightarrow
H^n(\mathfrak{F}_{_{W,\bar{Y}}},\mathbb{Z})$. By the universal
property of the inductive limit we get a morphism
$$\underrightarrow{H}^n(\mathfrak{F}_{_{L/K,S}},\mathbb{Z})\longrightarrow
H^n(\mathfrak{F}_{_{W,\bar{Y}}},\mathbb{Z}).$$ Using Proposition
\ref{prop-transitivity-incl-generic} (with $L'=\bar{K}$), and
passing to the limit, we obtain a morphism of spectral sequences
$$[\underrightarrow{H}^p(\mathfrak{F}_{_{L/K,S}},\underrightarrow{R}^qj_*\mathbb{Z})\Rightarrow
H^p(B_{W_{K}},\mathbb{Z})]\longrightarrow[{H}^p(\mathfrak{F}_{_{W,\bar{Y}}},R^q(j_*)\mathbb{Z})\Rightarrow
H^p(B_{W_{K}},\mathbb{Z})].$$ Comparing these spectral sequences and
using the previous proof, we deduce the result.
\end{e-proof}

Let $\bar{V}=(V,V_{\infty})$ be an open sub-scheme of $\bar{Y}$. It
defines a sub-object of the final object of
$\mathfrak{F}_{_{L/K,S}}$. The open sub-topos
$$\mathfrak{F}_{_{L/K,S}}/_{\bar{V}}\longrightarrow\mathfrak{F}_{_{L/K,S}}$$ is
the full sub-category of $\mathfrak{F}_{_{L/K,S}}$ whose objects are
of the form $\mathcal{F}=(F_v;f_v)_{v\in \bar{V}}$ (i.e.
$F_v=\emptyset$ for $v\in\bar{Y}-\bar{V}$).
\begin{e-defn}\label{Not-Licht-cohomology-open}
We denote by
$H^n(\mathfrak{F}_{_{L/K,S}},\bar{V},-):=H^n(\mathfrak{F}_{_{L/K,S}}/_{\bar{V}},-)$
the cohomology of the open sub-topos
$\mathfrak{F}_{_{L/K,S}}/_{\bar{V}}$. Then we set
$$\underrightarrow{H}^n(\mathfrak{F}_{_{L/K,S}},\bar{V},\mathbb{Z})=\underrightarrow{lim}\,{H}^n(\mathfrak{F}_{_{L/K,S}},\bar{V},\mathbb{Z}).$$
\end{e-defn}

Let $L/K$ be finite extension of $K$. For any place $v$ of $L$, we
denote by $U_{L_v}$ the maximal compact sub-group of $L_v^{\times}$.
Hence $U_{L_v}=\mathcal{O}_{L_v}^{\times}$ for a finite place $v$,
$U_{L_v}=\mathbb{S}^1$ for $v$ complex and
$U_{L_v}=\mathbb{Z}/2\mathbb{Z}$ for a real place $v$.

\begin{e-defn}\label{defn-class-group-barU}
Let $\bar{V}=(V,V_{\infty})$ be a connected étale $\bar{Y}$-scheme
and let $L=K(\bar{V})$ be the number field corresponding to the
generic point of $\bar{V}$. We define the \emph{class group}
$C_{\bar{V}}$ associated to $\bar{V}$ by the exact sequence of
topological groups
$$0\rightarrow\prod_{v\in
\bar{V}}U_{L_v} \longrightarrow C_L\rightarrow
C_{\bar{V}}\rightarrow0 .$$ We denote by $C^1_{\bar{V}}$ the
topological sub-group of $C_{\bar{V}}$ defined by the kernel of the
canonical continuous morphism
$C_{\bar{V}}\rightarrow\mathbb{R}_{>0}$.
\end{e-defn}
Note that $C_{\bar{V}}$ is just the $S$-idèle class group of the
number field $K(\bar{V})$, where $S$ is the set of valuations of
$K(\bar{V})$ not corresponding to a point of $\bar{V}$. The group
$C^1_{\bar{V}}$ is compact.

\begin{prop}\label{Licht-cohomology-open-Z}
Let $\bar{V}\varsubsetneq\bar{Y}$ be a proper open sub-scheme of
$\bar{Y}$. Then one has
\begin{eqnarray*}
\underrightarrow{H}^n(\mathfrak{F}_{_{L/K,S}},\bar{V},\mathbb{Z})&=&\mathbb{Z} \mbox{ for }n=0,\\
                           &=&0 \mbox{ for $n$ odd},         \\
                           &=&(C^1_{\bar{V}})^{\mathcal{D}}\mbox{ for
                           }n=2.
\end{eqnarray*}
For even $n\geq 4$, we have an exact sequence
$$
0\longrightarrow
\underrightarrow{H}^n(\mathfrak{F}_{_{L/K,S}},\bar{V},\mathbb{Z})\longrightarrow
H^n(W_K,\mathbb{Z})\longrightarrow\sum_{v\in
V_{\infty}}\mathbb{Z}\longrightarrow 0.$$
\end{prop}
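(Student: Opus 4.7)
The plan is to transport the argument of Theorem \ref{thm-lichten-weil-etale-Zcoef} to the open sub-topos $\mathfrak{F}_{_{L/K,S}}/_{\bar{V}}$. The key observation is that, since the generic point $v_0$ always lies in $\bar{V}$, the essential morphism $j_{_{L,S}}:B_{W_{L/K,S}}\rightarrow\mathfrak{F}_{_{L/K,S}}$ factors through the open immersion as a morphism $j^{\bar{V}}_{_{L,S}}:B_{W_{L/K,S}}\rightarrow\mathfrak{F}_{_{L/K,S}}/_{\bar{V}}$ sharing all the formal properties used in Propositions \ref{prop-higher-direct-im-generic} and \ref{prop-spectral-sequence-Lichtenbaum}. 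Running the Leray spectral sequence and then taking the inductive limit as in the proof of Proposition \ref{prop-spectral-sequence-Lichtenbaum} produces
$$\underrightarrow{H}^p(\mathfrak{F}_{_{L/K,S}}/_{\bar{V}},\underrightarrow{R}^q j^{\bar{V}}_*\mathbb{Z})\Rightarrow H^{p+q}(W_K,\mathbb{Z}).$$

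Next, I would recompute the $E_2$ page by repeating Proposition \ref{prop-higher-direct-im-generic}, Corollary \ref{prop-pour-Lichten-cohomology} and the opening of the proof of Theorem \ref{thm-lichten-weil-etale-Zcoef} verbatim, with the sole change that every sum $\sum_{v\neq v_0}$ is replaced by $\sum_{v\in\bar{V},\,v\neq v_0}$, since only the components indexed by $v\in\bar{V}$ survive in the open sub-topos. The $E_2^{p,q}$ page ends up supported in the two lines $q=0$ (where it equals the sought cohomology $\underrightarrow{H}^p(\mathfrak{F}_{_{L/K,S}}/_{\bar{V}},\mathbb{Z})$) and $p=0$ with $q$ even: the entry $E_2^{0,2}$ equals $\sum_{v\in\bar{V},\,v\nmid\infty}(\mathcal{O}_{K_v}^\times)^{\mathcal{D}}\oplus\sum_{v\in V_\infty}\mathbb{Z}$, while $E_2^{0,q}=\sum_{v\in V_\infty}\mathbb{Z}$ for even $q\geq 4$. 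The cases $n\in\{0,1\}$ and $n$ odd $\geq 5$ then drop out immediately. For even $n\geq 4$, the spectral sequence collapses to the four-term exact sequence $0\to\underrightarrow{H}^n\to H^n(W_K,\mathbb{Z})\to\sum_{v\in V_\infty}\mathbb{Z}\to\underrightarrow{H}^{n+1}\to 0$; the vanishing of $\underrightarrow{H}^{n+1}$ follows because the surjection $H^n(W_K,\mathbb{Z})\twoheadrightarrow\sum_{v\mid\infty}\mathbb{Z}$ established in the proof of Theorem \ref{thm-lichten-weil-etale-Zcoef} descends onto the direct summand $\sum_{v\in V_\infty}\mathbb{Z}$.

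The remaining degrees $n=2,3$ give the four-term exact sequence
$$0\to\underrightarrow{H}^2\to (C_K^1)^{\mathcal{D}}\to\sum_{v\in\bar{V},\,v\nmid\infty}(\mathcal{O}_{K_v}^\times)^{\mathcal{D}}\oplus\sum_{v\in V_\infty}\mathbb{Z}\to\underrightarrow{H}^3\to 0.$$
I would identify this as the Pontryagin dual of the short exact sequence of compact abelian groups
$$0\to\prod_{v\in\bar{V}}U_{K_v}\to C_K^1\to C^1_{\bar{V}}\to 0$$
furnished by Definition \ref{defn-class-group-barU}, whereupon $\underrightarrow{H}^2=(C^1_{\bar{V}})^{\mathcal{D}}$ and $\underrightarrow{H}^3=0$ can be read off directly. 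The hypothesis $\bar{V}\subsetneq\bar{Y}$ enters here to guarantee that the compact quotient $C^1_{\bar{V}}$ is the correct object (rather than the Arakelov $Pic^1(\bar{Y})$ of Theorem \ref{thm-lichten-weil-etale-Zcoef}). The main obstacle is precisely the verification that the middle arrow of this four-term sequence agrees, up to sign, with the Pontryagin dual of the canonical inclusion $\prod_{v\in\bar{V}}U_{K_v}\hookrightarrow C_K^1$; this naturality statement should follow from the local-global compatibilities built into the Weil maps $\theta_v$ in the same way as the analogous identification implicit in the proof of Theorem \ref{thm-lichten-weil-etale-Zcoef}.
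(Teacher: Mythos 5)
Your proposal follows essentially the same route as the paper: restrict the Leray spectral sequence for the generic-point embedding to the open sub-topos, observe that the $E_2$ page is supported on $q=0$ and the $p=0$, $q$-even column (with sums now indexed by $v\in\bar{V}$), and identify the resulting four-term exact sequence in degrees $2,3$ with the Pontryagin dual of $0\to\prod_{v\in\bar{V}}U_{K_v}\to C_K^1\to C^1_{\bar{V}}\to 0$, the injectivity of the first map being exactly where $\bar{V}\varsubsetneq\bar{Y}$ is used. This matches the paper's proof, including the final appeal to the surjectivity of $H^n(W_K,\mathbb{Z})\to\sum_{v\in V_{\infty}}\mathbb{Z}$ for even $n\geq4$.
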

\begin{e-proof}
We use again the Leray spectral sequence induced by the inclusion of
the generic point. We get
$\underrightarrow{H}^1(\mathfrak{F}_{_{L/K,S}},\bar{V},\mathbb{Z})=H^1(W_K,\mathbb{Z})=0$
and the exact sequence
$$0\rightarrow\underrightarrow{H}^2(\mathfrak{F}_{_{L/K,S}},\bar{V},\mathbb{Z})\rightarrow
(C_K^1)^{\mathcal{D}}\rightarrow\coprod_{v\in
V}(\mathcal{O}_{K_v}^{\times})^{\mathcal{D}}\oplus\coprod_{v\in
V_{\infty}}\mathbb{Z} \rightarrow
\underrightarrow{H}^3(\mathfrak{F}_{_{L/K,S}},\bar{V},\mathbb{Z})\rightarrow
0.$$ Moreover the map
$$\prod_{v\in
\bar{V}}U_{K_v}=\prod_{v\in
V}\mathcal{O}_{K_v}^{\times}\times\prod_{v\in
V_{\infty}}\mathbb{S}^1 \longrightarrow C_K^1$$ is injective, since
$\bar{V}\varsubsetneq\bar{Y}$. By Pontryagin duality, we obtain
$\underrightarrow{H}^3(\mathfrak{F}_{_{L/K,S}},\bar{V},\mathbb{Z})=0$
and
$$\underrightarrow{H}^2(\mathfrak{F}_{_{L/K,S}},\bar{V},\mathbb{Z})=Ker[(C_K^1)^{\mathcal{D}}\rightarrow\coprod_{v\in
\bar{V}}U_{K_v}^{\mathcal{D}}]=(C_{\bar{V}}^1)^{\mathcal{D}}$$ Next
we obtain an exact sequence
$$0\rightarrow
\underrightarrow{H}^n(\mathfrak{F}_{_{L/K,S}},\bar{V},\mathbb{Z})\rightarrow
H^n(W_K,\mathbb{Z})\rightarrow\sum_{v\in
V_{\infty}}\mathbb{Z}\rightarrow
\underrightarrow{H}^{n+1}(\mathfrak{F}_{_{L/K,S}},\bar{V},\mathbb{Z})\rightarrow
H^{n+1}(W_K,\mathbb{Z})=0$$ for even $n\geq 4$. This ends the proof
since the map $H^n(W_K,\mathbb{Z})\rightarrow\sum_{v\in
V_{\infty}}\mathbb{Z}$ is surjective for even $n\geq 4$ (see
\cite{MatFlach} proof of Corollary 9).
\end{e-proof}

\begin{prop}\label{Lichtenbaum-cohomology-open-R}
For any open sub-scheme $\bar{V}\subseteq\bar{Y}$, one has
$$\underrightarrow{H}^n(\mathfrak{F}_{_{L/K,S}},\bar{V},\widetilde{\mathbb{R}})=\mathbb{R}
\mbox{ for $n=0,1$ and
}\underrightarrow{H}^n(\mathfrak{F}_{_{L/K,S}},\bar{V},\widetilde{\mathbb{R}})=0
\mbox{ for $n\geq2$}.$$
\end{prop}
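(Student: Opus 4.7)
The strategy mirrors the proofs of Theorem~\ref{thm-lichten-weil-etale-Zcoef} and Proposition~\ref{Licht-cohomology-open-Z}: exploit the inclusion of the generic point $j:B_{W_{L/K,S}}\hookrightarrow\mathfrak{F}_{_{L/K,S}}/_{\bar V}$ and the Leray spectral sequence from Proposition~\ref{prop-spectral-sequence-Lichtenbaum} (in its open-subscheme version, which is valid because the generic point lies in every open $\bar V$), namely
\[
\underrightarrow{H}^p(\mathfrak{F}_{_{L/K,S}},\bar V,\underrightarrow{R}^qj_*\widetilde{\mathbb{R}})\Longrightarrow H^{p+q}(B_{W_K},\widetilde{\mathbb{R}}).
\]
By the proposition immediately preceding the theorem, the object denoted $\widetilde{\mathbb{R}}$ in $\mathfrak{F}_{_{L/K,S}}$ equals $j_{_{L/K,S*}}\widetilde{\mathbb{R}}$, so the $E_2^{n,0}$ term of this spectral sequence is exactly the group we want to compute.

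First I would show that $\underrightarrow{R}^qj_*\widetilde{\mathbb{R}}=0$ for $q\geq 1$. Running the argument that leads to Corollary~\ref{prop-pour-Lichten-cohomology}, but with $\widetilde{\mathbb{R}}$ in place of $\mathbb{Z}$, this amounts to proving that the sheaves $\underline{H}^q(\widetilde{W}_{K_v}^1,\widetilde{\mathbb{R}})$ vanish for every non-trivial $v$ and every $q\geq 1$, or equivalently that $H^q(B_{\widetilde{W}_{K_v}^1},\widetilde{\mathbb{R}})=0$. For a non-archimedean $v$ the group $W_{K_v}^1=I_v$ is profinite and $\widetilde{\mathbb{R}}$ is a uniquely divisible, torsion-free topological coefficient, so the (continuous) cohomology vanishes in positive degrees. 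For a complex $v$ the group $W_{K_v}^1\simeq\mathbb{S}^1$ is a compact connected Lie group; the required vanishing $H^q(B_{\mathbb{S}^1},\widetilde{\mathbb{R}})=0$ for $q\geq 1$ follows from \cite{MatFlach} (the same soft-sheaf argument used for $H^p(B_{\mathbb{R}},\widetilde{\mathbb{R}})$). This vanishing is the \emph{main obstacle}, since it crucially distinguishes $\widetilde{\mathbb{R}}$ from $\mathbb{Z}$: with $\mathbb{Z}$ coefficients the groups $H^q(B_{\mathbb{S}^1},\mathbb{Z})$ do not vanish and produce the infinite-rank phenomena of Theorem~\ref{thm-lichten-weil-etale-Zcoef}.

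Granted the vanishing, the spectral sequence degenerates at $E_2$ and the edge map yields an isomorphism
\[
\underrightarrow{H}^n(\mathfrak{F}_{_{L/K,S}},\bar V,\widetilde{\mathbb{R}})\;\simeq\;H^n(B_{W_K},\widetilde{\mathbb{R}})
\]
for every $n$, independently of $\bar V$. It then remains to compute the right-hand side. For $n=0$ the invariants under the trivial action give $\mathbb{R}$. For $n=1$ one has $H^1(B_{W_K},\widetilde{\mathbb{R}})=\mathrm{Hom}_{\mathrm{cont}}(W_K,\mathbb{R})=\mathrm{Hom}_{\mathrm{cont}}(C_K,\mathbb{R})$, and since $C_K\simeq C_K^1\times\mathbb{R}^{>0}$ with $C_K^1$ compact, this group is $\mathrm{Hom}_{\mathrm{cont}}(\mathbb{R}^{>0},\mathbb{R})=\mathbb{R}$. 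For $n\geq 2$ one invokes the results of \cite{MatFlach}: decomposing $C_K\simeq C_K^1\times\mathbb{R}^{>0}$ and using the vanishing of positive-degree cohomology of compact groups with $\widetilde{\mathbb{R}}$ coefficients together with the computation of $H^n(B_{\mathbb{R}},\widetilde{\mathbb{R}})$ forces $H^n(B_{W_K},\widetilde{\mathbb{R}})=0$. Assembling these three identifications yields the stated values.
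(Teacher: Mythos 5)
Your proposal is correct and follows essentially the same route as the paper: pass through the Leray spectral sequence for the inclusion of the generic point, kill all $\underrightarrow{R}^qj_*\widetilde{\mathbb{R}}$ for $q\geq 1$, and read off $\underrightarrow{H}^n=H^n(W_K,\widetilde{\mathbb{R}})$. One small remark on the key vanishing step: the paper handles all non-trivial $v$ uniformly by invoking compactness of $W^1_{K_v}$ (Flach, Corollary 8), whereas you split the argument, and for ultrametric $v$ you appeal to unique divisibility of the coefficient; the latter justification is a bit off-target here since $\widetilde{\mathbb{R}}$ is represented by the topological group $\mathbb{R}$, not a discrete $I_v$-module, so the usual "divisible $\Rightarrow$ acyclic" reasoning for profinite groups does not apply directly -- compactness of $W^1_{K_v}$ is the uniform and correct reason in both the ultrametric and archimedean cases. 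Apart from that wrinkle, the two proofs coincide, the only cosmetic difference being that you take the limit over $(L,S)$ at the outset and work with $W_K$, while the paper computes $H^n(\mathfrak{F}_{_{L/K,S}},\bar{V},\widetilde{\mathbb{R}})=H^n(W_{L/K,S},\widetilde{\mathbb{R}})$ at each finite level using $W_{L/K,S}=W^1_{L/K,S}\times\mathbb{R}$ and passes to the limit at the end.
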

\begin{e-proof}
Arguing as above and using the fact that
$H^n(W^1_{K_v},\widetilde{\mathbb{R}})=0$ for $n\geq1$ and any
non-trivial valuation $v$ since $W^1_{K_v}$ is compact (see
\cite{MatFlach} Corollary 8), we obtain
${H}^n(\mathfrak{F}_{_{L/K,S}},\bar{V},\widetilde{\mathbb{R}})=H^n(W_{L/K,S},\widetilde{\mathbb{R}})$
for any $\bar{V}\subseteq\bar{Y}$ and any pair $(L,S)$. Now the
product decomposition $W_{L/K,S}=W^1_{L/K,S}\times\mathbb{R}$ and
the compactness of $W^1_{L/K,S}$ show (see \cite{MatFlach}
Proposition 9.6):
$$H^n(W_{L/K,S},\widetilde{\mathbb{R}})=H^n(\mathbb{R},\widetilde{\mathbb{R}})=\mathbb{R}
\mbox{ for $n=0,1$ and 0 for $n\geq2$. }$$ Passing to the limit over
$(L,S)$, we obtain the result.
\end{e-proof}
\subsection{Cohomology with compact
support}\label{subsect-cpct-support}

The open sub-topos
$\phi:\mathfrak{F}_{_{L/K,S}}/_{Y}\rightarrow\mathfrak{F}_{_{L/K,S}}$
associated to the inclusion $Y\rightarrow\bar{Y}$ gives rise to
three adjoint functors $\phi_{L,S,!},\,\phi_{L,S}^*,\phi_{L,S,*}$.
The functor $\phi_{L,S,!}$ is the usual extension by 0. If
$\mathcal{A}=(A_v;f_v)_{v\in Y}$ is an abelian object of
$\mathfrak{F}_{_{L/K,S}}/_{Y}$, then $\phi_{L,S,!}\,\mathcal{A}$ is
the abelian object of $\mathfrak{F}_{_{L/K,S}}$ whose $v$-component
is $A_v$ for $v\in Y$, and $0$ for $v\in Y_{\infty}$. If there is no
risk of ambiguity, we denote by $\phi_{!},\,\phi^*,\phi_*$ the
functors defined above.

For any abelian object $\mathcal{A}=(A_v;f_v)_{v\in \bar{Y}}$ of
$\mathfrak{F}_{_{L/K,S}}$ we have an exact sequence
\begin{equation}\label{exact-sequence-flask-sheaves}
0\rightarrow
\phi_{!}\phi^*\mathcal{A}\rightarrow\mathcal{A}\rightarrow\prod_{Y_{\infty}}i_{v,*}A_v\rightarrow0.
\end{equation}
Moreover, $i_{v,*}$ is exact and preserves injectives, hence one has
\begin{equation}\label{coh-infty-flask}
H^n(\mathfrak{F}_{_{L/K,S}},\prod_{Y_{\infty}}i_{v,*}A_v)=H^n(\Coprod_{Y_{\infty}}B_{\mathbb{R}},\prod_{Y_{\infty}}i_{v,*}A_v)
=\prod_{Y_{\infty}}H^n(B_{\mathbb{R}},A_v).
\end{equation}

\begin{notation}
We denote by $\phi_!\mathbb{Z}$ (respectively
$\phi_!\widetilde{\mathbb{R}}$) the abelian object of
$Top\,(\mathfrak{F}_{\bullet})$ defined by the family of sheaves
$\phi_{L,S,!}\mathbb{Z}$ (respectively
$\phi_{L,S,!}\widetilde{\mathbb{R}}$) with the obvious transition
maps.
\end{notation}

Using the exact sequence of sheaves
(\ref{exact-sequence-flask-sheaves}), equation
(\ref{coh-infty-flask}), and passing to the limit we get the
following long exact sequences, for any open sub-scheme
$\bar{V}\subseteq\bar{Y}$ :
\begin{equation}\label{exact-seq-coh-support-flask-Z}
...\rightarrow
\underrightarrow{H}^n(\mathfrak{F}_{_{L/K,S}},\bar{V},\phi_!\mathbb{Z})\rightarrow
\underrightarrow{H}^n(\mathfrak{F}_{_{L/K,S}},\bar{V},\mathbb{Z})\rightarrow
\Prod_{V_{\infty}}H^n(B_{\mathbb{R}},\mathbb{Z})\rightarrow...
\end{equation}
\begin{equation}\label{exact-seq-coh-support-flask-R}
...\rightarrow
\underrightarrow{H}^n(\mathfrak{F}_{_{L/K,S}},\bar{V},\phi_!\widetilde{\mathbb{R}})\rightarrow
\underrightarrow{H}^n(\mathfrak{F}_{_{L/K,S}},\bar{V},\widetilde{\mathbb{R}})\rightarrow
\Prod_{V_{\infty}}H^n(B_{\mathbb{R}},\widetilde{\mathbb{R}})\rightarrow...
\end{equation}
By (\cite{MatFlach} Proposition 9.6), we have
$H^n(B_{\mathbb{R}},\mathbb{Z})=0$ for any $n\geq1$,
$H^n(B_{\mathbb{R}},\widetilde{\mathbb{R}})=\mathbb{R}$ for $n=0,1$
and $H^n(B_{\mathbb{R}},\widetilde{\mathbb{R}})=0$ for $n\geq2$.
This is enough to compute the groups
$\underrightarrow{H}^n(\mathfrak{F}_{_{L/K,S}},\bar{V},\phi_!\mathbb{Z})$
and
$\underrightarrow{H}^n(\mathfrak{F}_{_{L/K,S}},\bar{V},\phi_!\widetilde{\mathbb{R}})$
for any open $\bar{V}\subseteq\bar{Y}$. In particular, we recover
the result (\cite{Lichtenbaum} Theorem 6.3) for $\bar{V}=\bar{Y}$
and $n\leq3$.

\section{The category of sheaves on Lichtenbaum's Weil-étale site}

In this section we show that the topos $\mathfrak{F}_{_{L/K,S}}$ is
equivalent to the category of sheaves on Lichtenbaum's Weil-étale
site $T_{L/K,S}$.

\subsection{The local section site.}

The Weil-étale site $(T_{L/K,S},\mathcal{J}_{ls})$ is defined in
\cite{Lichtenbaum} using the groups $W_{L/K;S}$, where $L/K$ is a
finite Galois extension and $S$ a finite set of primes of $K$
containing the archimedean ones and the primes ramified in $L/K$.

\begin{e-defn}
An object of the category $T_{L/K,S}$ is a collection
$\mathcal{X}=(X_v;f_v)_{v\in\bar{Y}}$, where $X_{v}$ is a
$W_{k(v)}$-topological space and $f_v:X_{v}\rightarrow X_{v_0}$ is a
morphism of $W_{K_v}$-spaces for any $v\neq v_0$ (the topological
group $W_{K_v}$ acts continuously on $X_v$ and $X_{v_0}$ via the
morphisms $\theta_v:W_{K_v}\rightarrow W_K$ and
$q_v:W_{K_v}\rightarrow W_{k(v)}$ respectively). If $v=v_0$ we
require that the action of $W_K$ on $X_{v_0}$ factors through
$W_{L/K,S}$.

The morphisms in this category are defined in the obvious way. The
topology $\mathcal{J}_{ls}$ on the category $T_{L/K,S}$ is generated
by the pre-topology for which a cover is a family of morphisms
$\{\mathcal{X}_i\rightarrow\mathcal{X}\}$ such that $\{X_{i;v}
\rightarrow X_v\}$ is a local section cover, for any valuation $v$.
\end{e-defn}

M. Flach has shown in \cite{MatFlach} that the definition of
$H^i(W_K;A)$ as the direct limit
$\underrightarrow{lim}\,\,\,H^i(W_{L/K;S};A)$ coincides with the
topological group cohomology of $W_K$. Here, $A$ is a discrete
abelian group or the usual topological group $\mathbb{R}$ whith
trivial $W_K$-action (see \cite{MatFlach} Lemma 10). This suggests
the following definition.
\begin{e-defn}The \emph{local section site} $(T_{W;\bar{Y}};\mathcal{J}_{ls})$ is defined as
above, but the action of $W_K$ on the generic component $X_{v_0}$ of
an object $\mathcal{X}$ of $T_{W;\bar{Y}}$ is not supposed to factor
through $W_{L/K,S}$.
\end{e-defn}

For any topological group $G$, we denote by $B_{Top}G$ the category
of topological spaces (in a given universe) on which $G$ acts
continuously. The functor $B_{Top}\widetilde{W}_{K_v}\rightarrow
B_{Top}{W}_{K_v}$ induced by the surjective map
$W_{K_v}\rightarrow\widetilde{W}_{K_v}$ is fully faithful.
Therefore, an object of the category $T_{L/K,S}$ is given by a
collection $\mathcal{X}=(X_v;f_v)_{v\in\bar{Y}}$ where $f_v$ is a
map of $\widetilde{W}_{K_v}$-topological spaces.

The category $T_{W;\bar{Y}}$ (respectively $T_{L/K,S}$) has finite
projective limits. Indeed, the final object is given by the trivial
action of $W_{k(v)}$ on the one point space $X_v:=\{*\}$ for any
$v$, and by the trivial map $f_v:X_v\rightarrow X_{v_0}$. Let
$$\phi:\mathcal{U}=(U_v;f_v)\longrightarrow\mathcal{X}=(X_v;\xi_v)
\mbox{ and
}\phi':\mathcal{U'}=(U'_v;f'_v)\rightarrow\mathcal{X}=(X_v;\xi_v)$$
be two morphisms in $T_{W;\bar{Y}}$  (respectively in $T_{L/K,S}$).
The object $(U_v\times_{X_v}U'_v;\,f_v\times_{\xi_v}f'_v)_{v\in Y}$
does define a fiber product
$\mathcal{U}\times_{\mathcal{X}}\mathcal{U'}$ in the category
$T_{W;\bar{Y}}$ (respectively $T_{L/K,S}$).

\subsection{The local section site is a site for the flask topos.}\label{sect-Comparais-Lichten}

\begin{notation}
In order to simplify the notations, we assume in  this section
\ref{sect-Comparais-Lichten} that $L=\bar{K}/K$ is an algebraic
closure of $K$ and $S$ is the set of all places of $K$. However,
everything here remains valid for any suitable pair $(L/K,S)$.
\end{notation}
Let $v$ be a valuation of $K$. The Yoneda embedding yields fully
faithful functors
$$\varepsilon_v:B_{Top}W_{k(v)}\rightarrow B_{W_{k(v)}}\mbox{ and }
\varepsilon_{K_v}:B_{Top}W_{K_v}\rightarrow B_{W_{K_v}}.$$ Recall
that $\theta_v^*:B_{W_K}\rightarrow B_{W_{K_v}}$ and
$q_v^*:B_{W_{k(v)}}\rightarrow B_{W_{K_v}}$ denote the pull-backs of
the morphisms of classifying topoi induced by the Weil map
$\theta_v:W_{K_v}\rightarrow W_K$ and by the projection
$q_v:W_{K_v}\rightarrow W_{k(v)}$. In the following proof, we denote
also by
$$^{t}{\theta_v^*}:B_{Top}{W_K}\rightarrow B_{Top}{W_{K_v}} \mbox{
and }^{t}{q_v^*}:B_{Top}{W_{k(v)}}\rightarrow B_{Top}{W_{K_v}}$$ the
functors induced by $\theta_v$ and $q_v$. One has
\begin{equation}\label{identifications-pull-back-topol-yoneda}
q_v^*\circ\varepsilon_v=\varepsilon_{K_v}\circ {^t{q_v^*}} \mbox{
and
}\theta_v^*\circ\varepsilon_{v_0}=\varepsilon_{K_v}\circ{^t{\theta_v^*}}.
\end{equation}

\begin{prop}\label{prop-functor-loc-sect-site-flask-topos}
There is a fully faithful functor
$$\fonc{\mathrm{y}}{T_{W;\bar{Y}}}{\mathfrak{F}_{_{W;\bar{Y}}}}{\mathcal{X}=(X_v;f_v)}{\mathrm{y}(\mathcal{X})=(\varepsilon_v(X_v);\varepsilon_{K_v}(f_v))}.$$
\end{prop}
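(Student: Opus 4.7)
The plan is to unpack the definition of $\mathrm{y}(\mathcal{X})$ and check each clause using the displayed identifications (\ref{identifications-pull-back-topol-yoneda}) together with the fact that $\varepsilon_v$ and $\varepsilon_{K_v}$ are fully faithful (Yoneda).

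First, I would verify that $\mathrm{y}(\mathcal{X})$ is actually an object of $\mathfrak{F}_{_{W;\bar{Y}}}$. An object of the flask topos requires, for each $v\neq v_0$, a morphism in $B_{W_{K_v}}$ from $q_v^*(F_v)$ to $\theta_v^*(F_{v_0})$. Setting $F_v = \varepsilon_v(X_v)$ and $F_{v_0}=\varepsilon_{v_0}(X_{v_0})$, the two identifications in (\ref{identifications-pull-back-topol-yoneda}) give
\[ q_v^*\varepsilon_v(X_v) = \varepsilon_{K_v}({}^t q_v^*(X_v)), \qquad \theta_v^*\varepsilon_{v_0}(X_{v_0}) = \varepsilon_{K_v}({}^t\theta_v^*(X_{v_0})), \]
and since $f_v: X_v \to X_{v_0}$ is a morphism of $W_{K_v}$-spaces, i.e.\ a morphism ${}^t q_v^*(X_v)\to{}^t \theta_v^*(X_{v_0})$ in $B_{Top}W_{K_v}$, applying $\varepsilon_{K_v}$ yields exactly the required arrow $\varepsilon_{K_v}(f_v): q_v^*(F_v)\to\theta_v^*(F_{v_0})$. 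The condition $f_{v_0}=\mathrm{Id}_{X_{v_0}}$ passes through $\varepsilon_{v_0}$ to $\mathrm{Id}_{F_{v_0}}$. Functoriality in $\mathcal{X}$ is then routine: a morphism $\phi = (\phi_v)_v$ in $T_{W;\bar{Y}}$ is sent to $(\varepsilon_v(\phi_v))_v$, and naturality of the identifications (\ref{identifications-pull-back-topol-yoneda}) guarantees that the required squares in $B_{\widetilde{W}_{K_v}}$ commute because the corresponding squares in $B_{Top}W_{K_v}$ commute, which is exactly the condition that $\phi$ is a morphism of $T_{W;\bar{Y}}$.

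For fully faithfulness, let $\mathcal{X}=(X_v;f_v)$ and $\mathcal{X}'=(X'_v;f'_v)$ be two objects of $T_{W;\bar{Y}}$, and let $\Phi: \mathrm{y}(\mathcal{X})\to\mathrm{y}(\mathcal{X}')$ be a morphism in $\mathfrak{F}_{_{W;\bar{Y}}}$. By definition $\Phi$ consists of a family $(\Phi_v)_v$ where $\Phi_v: \varepsilon_v(X_v)\to\varepsilon_v(X'_v)$ is a morphism of $B_{W_{k(v)}}$ (resp.\ of $B_{W_K}$ for $v=v_0$). Because $\varepsilon_v$ is fully faithful, each $\Phi_v$ is of the form $\varepsilon_v(\phi_v)$ for a unique morphism $\phi_v: X_v\to X'_v$ of $B_{Top}W_{k(v)}$ (and analogously for $v_0$). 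It remains to check that the commutative squares
\[ \xymatrix{
q_v^*\varepsilon_v(X_v) \ar[r]^{q_v^*(\Phi_v)} \ar[d]_{\varepsilon_{K_v}(f_v)} & q_v^*\varepsilon_v(X'_v) \ar[d]^{\varepsilon_{K_v}(f'_v)} \\
\theta_v^*\varepsilon_{v_0}(X_{v_0}) \ar[r]^{\theta_v^*(\Phi_{v_0})} & \theta_v^*\varepsilon_{v_0}(X'_{v_0})
} \]
correspond, via the identifications (\ref{identifications-pull-back-topol-yoneda}), to commutative squares in $B_{Top}W_{K_v}$ involving $\phi_v,\phi_{v_0},f_v,f'_v$. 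This again uses fully faithfulness of $\varepsilon_{K_v}$: the above square sits in the image of $\varepsilon_{K_v}$, so its commutativity is equivalent to the commutativity of the analogous square of topological spaces, which is precisely the condition that $(\phi_v)_v$ is a morphism $\mathcal{X}\to\mathcal{X}'$ in $T_{W;\bar{Y}}$. This establishes a bijection $\mathrm{Hom}_{T}(\mathcal{X},\mathcal{X}')\simeq\mathrm{Hom}_{\mathfrak{F}}(\mathrm{y}(\mathcal{X}),\mathrm{y}(\mathcal{X}'))$.

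The main obstacle is essentially bookkeeping: one has to make sure that the two compatibilities in (\ref{identifications-pull-back-topol-yoneda}) not only hold on objects but are natural in morphisms of $B_{Top}{W_K}$ and $B_{Top}{W_{k(v)}}$, so that pulling back $\Phi_v=\varepsilon_v(\phi_v)$ and $\Phi_{v_0}=\varepsilon_{v_0}(\phi_{v_0})$ through $q_v^*$ and $\theta_v^*$ indeed produces the image under $\varepsilon_{K_v}$ of the expected maps of topological spaces. Once this naturality is recorded, both well-definedness and full faithfulness follow with no further input.
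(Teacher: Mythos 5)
Your argument is correct and follows essentially the same route as the paper: verify well-definedness via the identifications \eqref{identifications-pull-back-topol-yoneda}, then use full faithfulness of $\varepsilon_v$ and $\varepsilon_{K_v}$ to lift any morphism of $\mathfrak{F}_{_{W;\bar{Y}}}$ uniquely to $T_{W;\bar{Y}}$ by passing the compatibility square through $\varepsilon_{K_v}$. The paper splits the bijection on hom-sets into separate injectivity and surjectivity checks, but the content is identical to yours.
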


\begin{e-proof}

If $\mathcal{X}=(X_v;f_v)$ is an object of $T_{W;\bar{Y}}$, then
$f_v:{^{t}{q_v^*}}(X_v)\rightarrow{^{t}{\theta_v^*}}(X_{v_0})$ is a
map of $B_{Top}{W_{K_v}}$, for any valuation $v$. By
(\ref{identifications-pull-back-topol-yoneda}), the map
$$\varepsilon_{K_v}(f_v):q_v^*\circ\varepsilon_{v}(X_v)=\varepsilon_{K_v}\circ{^{t}{q_v^*}}(X_v)
\longrightarrow
\varepsilon_{K_v}\circ{^{t}{\theta_v^*}}(X_{v_0})=\theta_v^*\circ\varepsilon_{v}(X_{v_0})$$
is a morphism of $B_{W_{K_v}}$. Hence
$\mathrm{y}(\mathcal{X})=(\varepsilon_v(X_v);\varepsilon_{K_v}(f_v))$
is an object of $\mathfrak{F}_{_{W;\bar{Y}}}$ and $\mathrm{y}$ is a
functor. Let $\mathcal{X}=(X_v\,;\,\,\,f_v)_v$ and
$\mathcal{X'}=(X'_v\,;\,\,\,f'_v)_v$ be two objects of
$T_{W;\bar{Y}}$. Denote by
$${\mathrm{y}}_{(\mathcal{X};\mathcal{X'})}:Hom_{T_{W;\bar{Y}}}((X_v\,;\,\,\,f_v);(X'_v\,;\,\,\,f'_v))\longrightarrow
Hom_{\mathfrak{F}_{_{W;\bar{Y}}}}((\varepsilon_vX_v\,;\,\,\,\varepsilon_{K_v}f_v);(\varepsilon_vX_v\,;\,\,\,\varepsilon_{K_v}f_v))$$
the map defined by the functor $\mathrm{y}$. One has to show that
${\mathrm{y}}_{(\mathcal{X};\mathcal{X'})}$ is a bijection, for any
objects $\mathcal{X}$ and $\mathcal{X'}$. Let
$$\phi'=(\phi'_v)_v,\,\,\phi=(\phi_v)_v\,\,:(X_v\,;\,\,\,f_v)\rightrightarrows(X'_v\,;\,\,\,f'_v)$$
be two morphisms in $T_{W;\bar{Y}}$ such that
${\mathrm{y}}_{(\mathcal{X};\mathcal{X'})}(\phi')={\mathrm{y}}_{(\mathcal{X};\mathcal{X'})}(\phi)$.
One has $\varepsilon_v(\phi'_v)=\varepsilon_v(\phi_v)$ for any
$v\in\bar{Y}$. Since $\varepsilon_v$ is fully faithful, we get
$\phi'_v=\phi_v$ hence the map
${\mathrm{y}}_{(\mathcal{X};\mathcal{X'})}$ is injective. Let
$$\varphi=(\varphi_v)_v\,\,:(\varepsilon_vX_v\,;\,\,\,\varepsilon_{K_v}f_v)\rightarrow(\varepsilon_vX'_v\,;\,\,\,\varepsilon_{K_v}f'_v)$$
be a morphism in $\mathfrak{F}_{_{W;\bar{Y}}}$. For any
$v\in\bar{Y}$, there exists a unique morphism $\phi_v:X_v\rightarrow
X'_v$ such that $\varepsilon_v(\phi_v)=\varphi_v$ (since
$\varepsilon_v$ is fully faithful). The square
 \[ \xymatrix{
  q_v^*(\varepsilon_vX_v)\,\,\,\,\,\ar[d]_{\varepsilon_{K_v}(f_v)} \ar[r]^{q_v^*(\varepsilon_v(\phi_v))} & \,\,\,\,\,q_v^*(\varepsilon_vX'_v)\ar[d]^{\varepsilon_{K_v}(f'_v)}   \\
  \theta_v^*(\varepsilon_{v_0}X_{v_0})\,\,\,\,\,  \ar[r]_{\theta_v^*(\varepsilon_{v_0}(\phi_{v_0}))}& \,\,\,\,\,\theta_v^*(\varepsilon_{v_0}X'_{v_0})
} \] is commutative. By
(\ref{identifications-pull-back-topol-yoneda}), the following
diagram commutes as well
\[ \xymatrix{
  \varepsilon_{K_v}({^t{q_v^*}}(X_v))\,\,\,\,\,\ar[d]_{\varepsilon_{K_v}(f_v)} \ar[r]^{\varepsilon_{K_v}({^t{q_v^*}}(\phi_v))} &
  \,\,\,\,\,\varepsilon_{K_v}({^t{q_v^*}}(X'_v))\ar[d]^{\varepsilon_{K_v}(f'_v)}   \\
 \varepsilon_{K_v}({^t{\theta_v^*}}(X_{v_0}))\,\,\,\,\,  \ar[r]_{\varepsilon_{K_v}({^t{\theta_v^*}}(\phi_{v_0}))}&
 \,\,\,\,\,\varepsilon_{K_v}({^t{\theta_v^*}}(X'_{v_0}))
} \] Finally, the square
\[ \xymatrix{
{^t{q_v^*}}(X_v)\,\,\,\,\,\ar[d]_{f_v} \ar[r]^{{^t{q_v^*}}(\phi_v)} & \,\,\,\,\,{^t{q_v^*}}(X'_v)\ar[d]^{f'_v}   \\
{^t{\theta_v^*}}(X_{v_0})\,\,\,\,\,
\ar[r]_{{^t{\theta_v^*}}(\phi_{v_0})}&
\,\,\,\,\,{^t{\theta_v^*}}(X'_{v_0}) } \] is commutative, since
$\varepsilon_{K_v}$ is fully faithful. Hence
$\phi=(\phi_v)_v:\mathcal{X}\rightarrow\mathcal{X'}$ is a morphism
of $T_{W;\bar{Y}}$ such that
$$\mathrm{y}_{(\mathcal{X};\mathcal{X'})}(\phi)={\mathrm{y}}_{(\mathcal{X};\mathcal{X'})}((\phi_v)_v)=(\varepsilon_v\phi_v)_v=(\varphi_v)_v=\varphi.$$
The functor $\mathrm{y}$ is fully faithful, since the map
${\mathrm{y}}_{(\mathcal{X};\mathcal{X'})}$ is bijective for any
objects $\mathcal{X}$ and $\mathcal{X'}$ of $T_{W;\bar{Y}}$.
\end{e-proof}

For the notion of induced topology we refer to (\cite{SGA4}
III.3.1).

\begin{prop}\label{localsection-induced-by-canonical}
The local section topology $\mathcal{J}_{ls}$ on $T_{W;\bar{Y}}$ is
the topology induced by the canonical topology of
$\mathfrak{F}_{_{W;\bar{Y}}}$ via the functor $\mathrm{y}$.
\end{prop}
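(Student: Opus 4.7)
The plan is to unfold both topologies in terms of jointly epimorphic families and then reduce the comparison to a componentwise statement about classifying topoi. Recall that the canonical topology on a topos has as covering families precisely the jointly epimorphic families. Hence a sieve $R$ on an object $\mathcal{X}$ of $T_{W;\bar{Y}}$ is covering for the topology induced by $\mathrm{y}$ if and only if the image $\mathrm{y}(R)$ generates a jointly epimorphic family with target $\mathrm{y}(\mathcal{X})$ in $\mathfrak{F}_{_{W;\bar{Y}}}$. Using that $\mathrm{y}$ is fully faithful (Proposition \ref{prop-functor-loc-sect-site-flask-topos}), this translates to an assertion about families of morphisms between objects in the image of $\mathrm{y}$.

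Next, by Proposition \ref{limitfinies+colim-dans-flask-component-wise}, arbitrary inductive limits, and therefore jointly epimorphic families, are computed componentwise in $\mathfrak{F}_{_{W;\bar{Y}}}$. Hence $\{\mathrm{y}(\mathcal{X}_i)\to\mathrm{y}(\mathcal{X})\}$ is jointly epimorphic if and only if, for every valuation $v\in\bar{Y}$, the family $\{\varepsilon_v(X_{i,v})\to\varepsilon_v(X_v)\}$ is jointly epimorphic in $B_{W_{k(v)}}$ (with the convention $W_{k(v_0)}=W_K$). On the other hand, $\mathcal{J}_{ls}$ is componentwise by definition: $\{\mathcal{X}_i\to\mathcal{X}\}$ is a $\mathcal{J}_{ls}$-cover precisely when each family $\{X_{i,v}\to X_v\}$ is a local section cover of $W_{k(v)}$-topological spaces. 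So the proposition reduces, valuation by valuation, to the following assertion: for any topological group $H$, a family $\{Y_i\to Y\}$ in $B_{Top}H$ is a local section cover if and only if $\{\varepsilon(Y_i)\to\varepsilon(Y)\}$ is jointly epimorphic in $B_H$.

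This componentwise equivalence is precisely the content of the fact, recalled in Section 2 following \cite{MatFlach} Corollary 2, that $(B_{Top}H,\mathcal{J}_{ls})$ is a site for the classifying topos $B_H$ via the Yoneda functor $\varepsilon$. Indeed, whenever a fully faithful functor from a site into a topos equipped with its canonical topology exhibits the site as a generating site, the given topology necessarily coincides with the one induced from the canonical topology; one can check this by using the identification $B_H\simeq\mathcal{T}/y(H)$ from Section 2 and pulling epimorphicity in $\mathcal{T}$ back through the forgetful functor to an open-cover condition on the underlying spaces of $Y_i\to Y$.

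The bulk of the argument is bookkeeping: threading the componentwise decomposition of $\mathfrak{F}_{_{W;\bar{Y}}}$ through the definitions of the two topologies. The one piece of genuine content is the componentwise statement for classifying topoi used above, and that is the step where the main obstacle lies; fortunately it is available through \cite{MatFlach}, so the present proof can be carried out as pure formal manipulation.
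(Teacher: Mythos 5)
Your overall plan is a genuine rearrangement of the argument, but it hinges on an unproved characterization of the induced topology, and this is where the real work of the paper's proof lives. You assert at the outset (and again, implicitly, in the last paragraph for the groups $H=W_{k(v)}$) that a sieve $R$ on $\mathcal{X}$ is covering for the topology induced by $\mathrm{y}$ from the canonical topology of $\mathfrak{F}_{_{W;\bar{Y}}}$ if and only if the family $\{\mathrm{y}(\mathcal{X}_i)\to\mathrm{y}(\mathcal{X})\}$ is jointly epimorphic. That is not the definition of the induced topology (which by \cite{SGA4} III.3.1 is the \emph{finest} topology on $T_{W;\bar{Y}}$ making $\mathrm{y}$ continuous), and it is not automatic: for an arbitrary fully faithful functor $u:\mathcal{C}\to\mathcal{E}$ into a topos with its canonical topology, a sieve $R$ on $X$ is $u$-induced-covering precisely when the canonical map $\mathrm{colim}_{(Y,f)\in R}\,u(Y)\to u(X)$ is an \emph{isomorphism}, not merely an epimorphism. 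To identify this colimit with the union of images one needs the additional observation that, for $(Y,f),(Y',f')\in R$, the fibre product $Y\times_X Y'$ lies in $\mathcal{C}$ (since $T_{W;\bar{Y}}$ has and $\mathrm{y}$ preserves finite limits) and, being a sieve, $R$ contains $Y\times_X Y'\to X$ together with both projections; it is this that forces $\mathrm{colim}_R\,u(Y)$ to be a subobject of $u(X)$. The paper's proof does exactly this verification by hand: the exactness of the second row of the diagram $\mathbb{D}$ is the sheaf condition, and the role of the fibre products $\mathcal{X}_i\times_{\mathcal{X}}\mathcal{X}_j$ there is precisely the point you are skipping. Your componentwise reduction via Proposition \ref{limitfinies+colim-dans-flask-component-wise} is correct and clean once the joint-epimorphism characterization is in hand, but as written the argument assumes the conclusion at two levels (for $\mathrm{y}$ and then again for each $\varepsilon_v:B_{Top}W_{k(v)}\to B_{W_{k(v)}}$), with the hand-wave ``one can check this by pulling epimorphicity back through the forgetful functor'' standing in for the content. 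A secondary issue: you also appeal to $T_{W;\bar{Y}}$ being generating in $\mathfrak{F}_{_{W;\bar{Y}}}$ (to invoke the SGA4 IV.1.2.1-style argument), which in the paper is established only after this proposition; the order can be fixed since the proof of Proposition \ref{prop-T_WY-generates-F_WY} does not depend on this one, but you should say so explicitly rather than rely on it silently.

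To summarize the difference: the paper factors $\mathrm{y}$ through $(i_v^*)_v:\mathfrak{F}_{_{W;\bar{Y}}}\to\coprod_v B_{W_{k(v)}}$, applies Flach's Proposition 4.1 componentwise together with transitivity of induced topologies, and then establishes the only non-formal point---that $(i_v^*)_v$ induces the canonical topology on $\mathfrak{F}_{_{W;\bar{Y}}}$---by a direct subcanonicality check. Your proposal would replace that check with a general lemma about sieves and joint epimorphisms for left-exact fully faithful functors into a topos. That lemma is true in this setting, so the strategy is salvageable, but you need to state and prove it (citing the left-exactness of $\mathrm{y}$ and the closure of sieves under precomposition by fibre-product projections); as it stands, the step from ``induced topology'' to ``jointly epimorphic image'' is a gap.
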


\begin{e-proof}
Recall that the coproduct of a family of topoi is, as a category,
the product of the underlying categories. Then consider the
following commutative diagram
 \[ \xymatrix{
\mathfrak{F}_{_{W;\bar{Y}}} \ar[r]^{{(i_v^*)}_v\,\,\,\,}  & \coprod_{v\in \bar{Y}} B_{W_{k(v)}}                         \\
T_{W;\bar{Y}}   \ar[r]^{}\ar[u]^{\mathrm{y}}          & \prod_{v\in
\bar{Y}} B_{Top}W_{k(v)}
  \ar[u]^{}  }
\]
The local section topology on $T_{W;\bar{Y}}$ is (by definition) the
topology induced by the local section topology on $\prod_{v\in
\bar{Y}} B_{Top}W_{k(v)}$ (see \cite{SGA4} III.3.4). By
\cite{MatFlach} Proposition 4.1, the local section topology on
$B_{Top}W_{k(v)}$ is the topology induced by the canonical topology
of $B_{W_{k(v)}}$. Hence the local section topology on $\prod
B_{Top}W_{k(v)}$ is the topology induced by the canonical topology
of $\coprod B_{W_{k(v)}}$. Since the previous diagram is
commutative, the local section topology on $T_{W;\bar{Y}}$ is the
topology induced by the canonical topology of $\coprod B_{W_{k(v)}}$
via the functor
$${(i^*_v)}_v\circ \mathrm{y}\,\,:T_{W;\bar{Y}}\longrightarrow \mathfrak{F}_{_{W;\bar{Y}}}
\longrightarrow\coprod_{v\in\bar{Y}} B_{W_{k(v)}}.$$ Hence, it
remains to show that the canonical topology of
$\mathfrak{F}_{_{W;\bar{Y}}}$ is induced by the canonical topology
of $\coprod B_{W_{k(v)}}$.

The functor
${(i^*_v)}_v:\mathfrak{F}_{_{W;\bar{Y}}}\rightarrow\coprod
B_{W_{k(v)}}$ is the pull-back of the morphism of topoi
${(i_v)}_v:\coprod B_{W_{k(v)}}\rightarrow
\mathfrak{F}_{_{W;\bar{Y}}}$. Then ${(i^*_v)}_v$ is a continuous
morphism of left exact sites, when $\mathfrak{F}_{_{W;\bar{Y}}}$ and
$\coprod B_{W_{k(v)}}$ are viewed as categories endowed with their
canonical topologies. This shows that the topology
$\mathcal{J}_{ind}$ on $\mathfrak{F}_{_{W;\bar{Y}}}$ induced by the
canonical topology of $\coprod B_{W_{k(v)}}$ is finer than the
canonical topology of $\mathfrak{F}_{_{W;\bar{Y}}}$, by definition
of the induced topology.

We need to show that any representable presheaf is a sheaf on the
site $(\mathfrak{F}_{_{W;\bar{Y}}};\mathcal{J}_{ind})$. Let
$\mathcal{F}={(F_v;f_v)}_v$ be an object of
$\mathfrak{F}_{_{W;\bar{Y}}}$, and let
$$\{u_i:\,\,\mathcal{X}_i=(X_{i;v};\xi_{i;v})\longrightarrow\mathcal{X}=(X_{v};\xi_{v})\}_{i\in I}$$
be a covering family of the site
$(\mathfrak{F}_{_{W;\bar{Y}}};\mathcal{J}_{ind})$. The functor
$$(\mathfrak{F}_{_{W;\bar{Y}}};\mathcal{J}_{ind})\longrightarrow(\coprod_{v\in\bar{Y}}
B_{W_{k(v)}};\mathcal{J}_{can})\longrightarrow(
B_{W_{k(v)}};\mathcal{J}_{can})$$ is continuous. Therefore (see
\cite{SGA4} III.1.6) the family
$$\{u_{i;v}:X_{i;v}\longrightarrow X_v\}_{i\in I}$$ is a covering
family of $( B_{W_{k(v)}};\mathcal{J}_{can})$ for any valuation $v$.
Since the covering families for the canonical topology of a topos
are precisely the epimorphic families, the family
$\{X_{i;v}\rightarrow X_v\}$ is epimorphic. Moreover, the pull-back
$q_v^*$ of the morphism $q_v:B_{W_{K_v}}\rightarrow B_{W_{k(v)}}$
preserves (as any pull-back) epimorphic families. Hence the family
$$\{ q_v^*(u_{i;v}):\,\,q_v^*(X_{i;v})\longrightarrow
q_v^*(X_v)\}_{i\in I}$$ is epimorphic in the category $B_{W_{K_v}}$.
Consider the diagram $\mathbb{D}$:
\[ \xymatrix{
  Hom({(X_v)}_v;{(F_v)}_v)  \ar[r]^{b} & \prod Hom({(X_{i;v})}_v;{(F_v)}_v)  & \rightrightarrows & \prod
Hom({(X_{i;v}\times_{X_v}X_{j;v})}_v;{(F_v)}_v) \\
  Hom(\mathcal{X};\mathcal{F}) \ar[u]_{a}  \ar[r]^{c}& \prod
  Hom(\mathcal{X}_i;\mathcal{F})\ar[u]_{d}  &\rightrightarrows &
  \prod
  Hom(\mathcal{X}_i\times_{\mathcal{X}}\mathcal{X}_j;\mathcal{F})\ar[u]
} \] The sets of homomorphisms in the first line correspond to the
category $\coprod B_{W_{k(v)}}$, and the set of homomorphisms of the
second line correspond to the category
$\mathfrak{F}_{_{W;\bar{Y}}}$. The vertical arrows are given by the
faithful functor ${(i_v^*)}_v$. Hence those vertical maps are all
injective. In particular, $a$ and $d$ are both injective.

The functor ${(i_v^*)}_v$ sends covering families of
$\mathfrak{F}_{_{W;\bar{Y}}}$ to covering families of $\coprod
B_{W_{k(v)}}$, since ${(i_v^*)}_v$ is continuous. Moreover, any
representable presheaf of $\coprod B_{W_{k(v)}}$ is a sheaf for the
canonical topology. This shows that the first line of $\mathbb{D}$
is exact. Hence, the maps $a$ and $b$ are both injective, which
shows that $c$ is injective.

Now, let ${(\varphi_i)}_i$ be an element of the kernel of
$$\prod Hom(\mathcal{X}_i;\mathcal{F})\rightrightarrows \prod
Hom(\mathcal{X}_i\times_{\mathcal{X}}\mathcal{X}_j;\mathcal{F}).$$
The square on the right hand side is commutative hence
$d({(\varphi_i)}_i)$ is in the kernel of
$$\prod Hom({(X_{i;v})}_v;{(F_v)}_v)   \rightrightarrows  \prod
Hom({(X_{i;v}\times_{X_v}X_{j;v})}_v;{(F_v)}_v).$$ Then we get an
element $\phi\in Hom({(X_v)}_v;{(F_v)}_v)$ which goes to
${d((\varphi_i)}_i)$, since the first line is exact. More precisely,
one has $b(\phi)={d((\varphi_i)}_i)$. Let $v$ be a non-trivial
valuation. For any $i\in I$, consider the diagram
\[ \xymatrix{
q_v^*(X_{i;v})\,\,\,\,\,\,\,\, \ar[d]_{\xi_{i;v}}
\ar[r]^{q_v^*(u_{i;v})}
& \,\,\,\,q_v^*(X_v) \ar[d]^{\xi_v} \ar[r]^{q_v^*(\phi_v)}\,\,\,\,  & \,\,\,\,q_v^*(F_v) \ar[d]^{f_v}\\
 \theta_v^*(X_{i;v_0})\,\,\,\,\,\,\,\, \ar[r]^{u_{i;v}}&
\,\,\,\,\theta_v^*(X_{v_0}) \ar[r]_{\theta_v^*(\phi_{v_0})} \,\,\,\,
&\,\,\,\,\theta_v^* (F_{v_0}) }
\]
Here, the total square and the left hand side square are both
commutative. Indeed, $$u_i\in
Hom_{\mathfrak{F}_{_{W;\bar{Y}}}}(\mathcal{X}_i;\mathcal{X}) \mbox{
and } \phi\circ u_i=\varphi_i\in
Hom_{\mathfrak{F}_{_{W;\bar{Y}}}}(\mathcal{X}_i;\mathcal{F}).$$ It
follows that the elements
$$\theta_v^*(\phi_{v_0})\circ\xi_v\mbox{ and }f_v\circ q_v^*(\phi_v)$$
of the set $Hom_{B_{W_{K_v}}}(q_v^*(X_v);\theta_v^*(F_{v_0}))$ have
the same image under the morphism
$$Hom_{B_{W_{K_v}}}(q_v^*(X_v);\theta_v^*(F_{v_0}))\longrightarrow
Hom_{B_{W_{K_v}}}(q_v^*(X_{i;v});\theta_v^*(F_{v_0})),$$ for any
$i\in I$. Hence, $\theta_v^*(\phi_{v_0})\circ\xi_v$ and $f_v\circ
q_v^*(\phi_v)$ have the same image
under the morphism
\begin{equation}\label{epimorphic}
Hom_{B_{W_{K_v}}}(q_v^*(X_{v});\theta_v^*(F_{v_0}))\longrightarrow
Hom_{B_{W_{K_v}}}(\coprod_{i\in
I}q_v^*(X_{i;v});\theta_v^*(F_{v_0})).
\end{equation}
Furthermore, the morphism (\ref{epimorphic}) is injective, since the
family $\{q_v^*(X_{i;v})\rightarrow q_v^*(X_v)\}$ is epimorphic. The
equality
$$\theta_v^*(\phi_{v_0})\circ\xi_v=f_v\circ
q_v^*(\phi_v)$$ follows. For any valuation $v\in \bar{Y}$, the
square
\[ \xymatrix{
q_v^*(X_v) \ar[d]^{\xi_v} \ar[r]^{q_v^*(\phi_v)}\,\,\,\,  & \,\,\,\,q_v^*(F_v) \ar[d]^{f_v}\\
\theta_v^*(X_{v_0}) \ar[r]_{\theta_v^*(\phi_{v_0})} \,\,\,\,
&\,\,\,\,\theta_v^* (F_{v_0}) }
\]
is commutative. In other words, the element $\phi\in Hom_{\coprod
B_{W_{k(v)}}}({(X_v)}_v;{(F_v)}_v)$ lies in
$$Hom_{\mathfrak{F}_{_{W;\bar{Y}}}}(\mathcal{X};\mathcal{F})\,\,\subseteq
\,\,Hom_{\coprod B_{W_{k(v)}}}({(X_v)}_v;{(F_v)}_v).$$ Hence there
exists a unique $\varphi\in
Hom_{\mathfrak{F}_{_{W;\bar{Y}}}}(\mathcal{X};\mathcal{F})$ such
that $a(\varphi)=\phi$. We get $$b\circ
a(\varphi)=b(\phi)=d({(\varphi_i)}_i)=d\circ c(\varphi)$$ and
$$c(\varphi)={(\varphi_i)}_i,$$ since $d$ is injective. This shows
that the second line of $\mathbb{D}$ is exact.

We have shown that the sequence
$$Hom_{\mathfrak{F}_{_{W;\bar{Y}}}}(\mathcal{X};\mathcal{F})\rightarrow
\prod_{i}
Hom_{\mathfrak{F}_{_{W;\bar{Y}}}}(\mathcal{X}_i;\mathcal{F})\rightrightarrows
\prod_{i;j}
Hom_{\mathfrak{F}_{_{W;\bar{Y}}}}(\mathcal{X}_i\times_{\mathcal{X}}\mathcal{X}_j;\mathcal{F})$$
is exact and that the first arrow is injective, for any object
$\mathcal{F}$ of $\mathfrak{F}_{_{W;\bar{Y}}}$ and any covering
family $\{\mathcal{X}_i\rightarrow\mathcal{X}\}_{i\in I}$ of the
site $(\mathfrak{F}_{_{W;\bar{Y}}};\mathcal{J}_{ind})$. Hence any
representable presheaf of the category $\mathfrak{F}_{_{W;\bar{Y}}}$
is a sheaf for the topology $\mathcal{J}_{ind}$. In other words the
topology $\mathcal{J}_{ind}$ is sub-canonical, that is, coarser than
the canonical topology. Since $\mathcal{J}_{ind}$ is also finer than
the canonical topology, it is the canonical topology.
\end{e-proof}

\begin{e-cor}
The functor $\mathrm{y}$ is continuous.
\end{e-cor}
\begin{e-proof}
By definition of the induced topology, $\mathcal{J}_{ls}$ is the
finest topology on $T_{W;\bar{Y}}$ such that $\mathrm{y}$ is
continuous (see \cite{SGA4} III.3.1).
\end{e-proof}

\begin{e-cor}\label{cor-loc-section-sub-canon}
The local section topology $\mathcal{J}_{ls}$ on $T_{W;\bar{Y}}$ is
sub-canonical.
\end{e-cor}
\begin{e-proof}
Let $\mathcal{X}$ be an object of $T_{W;\bar{Y}}$. The presheaf
$\widetilde{\mathrm{y}(\mathcal{X})}$ of
$\mathfrak{F}_{_{W;\bar{Y}}}$ represented by
$\mathrm{y}(\mathcal{X})$ is a sheaf, since
$\mathfrak{F}_{_{W;\bar{Y}}}$ is endowed with the canonical
topology. The restriction of $\widetilde{\mathrm{y}(\mathcal{X})}$
to the sub-category $T_{W;\bar{Y}}$ via the functor $\mathrm{y}$ is
a sheaf for the local section topology $\mathcal{J}_{ls}$, since
$\mathrm{y}$ is continuous. But this sheaf is canonically isomorphic
to the presheaf $\widetilde{\mathcal{X}}$ of $T_{W;\bar{Y}}$
represented by $\mathcal{X}$, since $\mathrm{y}$ is fully faithful.
Hence $\widetilde{\mathcal{X}}$ is a sheaf.
\end{e-proof}

We denote by $y:Top\rightarrow\mathcal{T}$ the Yoneda embedding. In
order to simplify the notations in the following proof, we also
denote by $y:B_{Top}{G}\rightarrow B_{G}$ the induced functor, for
any topological group $G$. By \cite{MatFlach} Corollary 3, the full
sub-category of $B_{W_{k(v)}}$ defined by the family of objects
$$\{y(W_{k(v)}\times Z); Z\in Ob(Top)\}$$
is a generating sub-category, for any valuation $v$. Here
$y(W_{k(v)})$ acts on $y(W_{k(v)}\times Z)=y(W_{k(v)})\times y(Z)$
on the first factor. Consider the sequence adjoint functors between
$B_{W_{K_v}}$ and $B_{W_K}$
$${\theta_v}_!\,\,\,;\,\,\,\,\,\,\theta_v^*\,\,\,;\,\,\,\,\,\,{\theta_v}_*$$
induced by the morphism of topological groups
$\theta_v:W_{K_v}\rightarrow W_K$. Recall that the functor
${\theta_v}_!$ is defined by
$${\theta_v}_!(\mathcal{F})=y(W_K)\times^{y(W_{K_v})}\mathcal{F}:=(y(W_K)\times \mathcal{F})/{y(W_{K_v})},$$
where $y(W_{K_v})$ acts on the left on $\mathcal{F}$ and by
right-translations on $y(W_K)$. For any $v\in \bar{Y}^0$ and any
$Z\in Ob(Top)$, we define
$$\mathcal{G}_{Z;v}=(({\theta_v}_!(q_v^*(y(W_{k(v)}\times
Z)))\,\,\,;\,\,\,y(W_{k(v)}\times Z)\,\,\, ;\,\,\,
(\emptyset_{B_{W_{k(w)}}})_{w\neq{v_0;v}})\,\,\,;\,\,\,\,(g_{Z;v})),$$
where the morphism
$$g_{Z;v}:q_v^*(y(W_{k(v)}\times Z))\longrightarrow\theta_v^*\circ{\theta_v}_!(q_v^*(y(W_{k(v)}\times Z)))$$
is given by adjunction. For the trivial valuation $v_0$ and for any
$Z\in Ob(Top)$, we define
$$\mathcal{G}_{Z;v_0}:=(y(W_{K}\times Z)\,\,\,;\,\,\,(\emptyset_{B_{W_{k(v)}}})_{v\neq v_0}).$$
Note that $T_{W;\bar{Y}}$ is equivalent to a full subcategory of
$\mathfrak{F}_{_{W;\bar{Y}}}$, by Proposition
\ref{prop-functor-loc-sect-site-flask-topos}.

\begin{prop}\label{prop-T_WY-generates-F_WY}
The category $T_{W;\bar{Y}}$ is a generating sub-category of
$\mathfrak{F}_{_{W;\bar{Y}}}$.
\end{prop}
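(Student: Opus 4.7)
The strategy is to verify that each object $\mathcal{G}_{Z;v}$ defined above lies in the essential image of the fully faithful functor $\mathrm{y}$, and then to argue that the family $\{\mathcal{G}_{Z;v}\}$ itself already generates $\mathfrak{F}_{_{W;\bar{Y}}}$. For the second point I would repeat verbatim the generating-family construction in the proof of Proposition~\ref{flasktopos-is-topos}, replacing the abstract family $\{X_{v;i}\}$ of generators of $B_{W_{k(v)}}$ by the explicit family $\{y(W_{k(v)}\times Z):Z\in\mathrm{Ob}(Top)\}$ provided by \cite{MatFlach} Corollary~3. The generators $\mathcal{X}_{v;i}$ built there become exactly the objects $\mathcal{G}_{Z;v}$ defined just before the proposition, and the adjunction argument carries over unchanged.

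It then remains to produce, for each $(Z,v)$, an object $\mathcal{T}_{Z;v}\in T_{W;\bar{Y}}$ with $\mathrm{y}(\mathcal{T}_{Z;v})\simeq\mathcal{G}_{Z;v}$. For $v=v_0$ this is automatic: take $\mathcal{T}_{Z;v_0}=(W_K\times Z;(\emptyset)_{w\neq v_0})$, using that $y$ sends the empty topological space to the initial sheaf. For a non-trivial valuation $v$, introduce the topological $W_K$-space
\[
T_{Z;v}\,:=\,W_K\times^{W_{K_v}}(W_{k(v)}\times Z),
\]
that is, the quotient of $W_K\times W_{k(v)}\times Z$ by the diagonal action $h\cdot(g,x,z)=(g\theta_v(h),q_v(h)^{-1}x,z)$ of $W_{K_v}$, equipped with the left $W_K$-action on the first factor. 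Let $\mathcal{T}_{Z;v}$ be the object of $T_{W;\bar{Y}}$ whose generic component is $T_{Z;v}$, whose $v$-component is $W_{k(v)}\times Z$, whose remaining components are empty, and whose structure map at $v$ is $(x,z)\mapsto[e_{W_K},x,z]$; this map is easily seen to be $W_{K_v}$-equivariant.

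The main technical point is then the identification $\mathrm{y}(\mathcal{T}_{Z;v})\simeq\mathcal{G}_{Z;v}$. Only the generic components need be compared, where it reduces to
\[
y\bigl(W_K\times^{W_{K_v}}(W_{k(v)}\times Z)\bigr)\,\simeq\,\theta_{v!}\bigl(q_v^*\,y(W_{k(v)}\times Z)\bigr)
\]
in $B_{W_K}$. Since $q_v^*$ is restriction of scalars along $y(q_v)$, it commutes with $y$ on representables, so the right-hand side identifies with $y(W_K)\times^{y(W_{K_v})}y(W_{k(v)}\times Z)$. Because $y$ preserves finite products, this balanced product is the coequalizer in $\mathcal{T}$ of two maps $y(W_K\times W_{K_v}\times W_{k(v)}\times Z)\rightrightarrows y(W_K\times W_{k(v)}\times Z)$. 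The $W_{K_v}$-action being free in the $W_K$-factor (via the injective morphism $\theta_v$), the topological quotient map $W_K\times W_{k(v)}\times Z\to T_{Z;v}$ is a principal $W_{K_v}$-bundle and in particular admits local sections. By the equality $\mathcal{J}_{ls}=\mathcal{J}_{open}$ on $Top$ (recalled from \cite{MatFlach} Lemma~1), this quotient map is therefore a covering morphism in the site $(Top,\mathcal{J}_{open})$, hence an epimorphism in $\mathcal{T}$; it follows that $y$ interchanges the topological coequalizer with the topos-theoretic coequalizer, which gives the required isomorphism.

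Combining the two ingredients, every object $\mathcal{F}$ of $\mathfrak{F}_{_{W;\bar{Y}}}$ admits an epimorphic cover by objects of the form $\mathrm{y}(\mathcal{T}_{Z;v})$, which proves the proposition. The principal obstacle is exactly the local-section verification underlying the isomorphism $y(W_K\times^{W_{K_v}}Y)\simeq y(W_K)\times^{y(W_{K_v})}y(Y)$: this is what bridges the concrete topological induction $W_K\times^{W_{K_v}}(-)$ available inside $T_{W;\bar{Y}}$ with the sheaf-theoretic induction $\theta_{v!}$ that enters the definition of $\mathcal{G}_{Z;v}$.
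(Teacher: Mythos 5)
Your overall strategy mirrors the paper's exactly: both reduce the proposition to the two claims that the family $\{\mathcal{G}_{Z;v}\}$ generates $\mathfrak{F}_{_{W;\bar{Y}}}$ and that each $\mathcal{G}_{Z;v}$ lies in the essential image of $\mathrm{y}$, and the central technical identification
\[
\theta_{v!}\bigl(q_v^{*}\,y(W_{k(v)}\times Z)\bigr)\;\simeq\; y\bigl(W_K\times^{W_{K_v}}(W_{k(v)}\times Z)\bigr)
\]
in $B_{W_K}$ is the same pivot. The paper at this point simply cites the proof of \cite{MatFlach} Lemma~13; you instead try to derive it directly from general nonsense about coequalizers in $\mathcal{T}$.

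The gap is in the sentence \emph{``the topological quotient map $W_K\times W_{k(v)}\times Z\to T_{Z;v}$ is a principal $W_{K_v}$-bundle and in particular admits local sections.''} Freeness of a topological action of a closed subgroup does not yield local triviality; ``free $\Rightarrow$ principal bundle'' fails in general (only in special cases, e.g.\ when the acting group is Lie or profinite, or when the base has finite covering dimension, does one have such results), and $W_{K_v}$ is neither Lie nor profinite here. The content you need is precisely that the continuous surjection $W_K\to W_K/\theta_v(W_{K_v})$ admits local continuous sections; this is a nontrivial property of the Weil groups, and it is exactly what makes $y(W_K/\theta_v(W_{K_v}))$ the quotient of $y(W_K)$ by $y(W_{K_v})$ in $\mathcal{T}$ and hence drives the isomorphism you want. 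Everything else in your argument is fine -- the reduction to local sections via $\mathcal{J}_{ls}=\mathcal{J}_{open}$, the coequalizer reformulation, and the preservation of finite products and kernel pairs by $y$. So the repair is simply to replace the unsupported ``free $\Rightarrow$ principal'' claim with an actual proof (or a citation) that $W_K\to W_K/\theta_v(W_{K_v})$ has local sections; that, in effect, is what \cite{MatFlach} Lemma~13 supplies, and why the paper cites it rather than reproving it.
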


\begin{e-proof}

It is shown in the proof of Proposition \ref{flasktopos-is-topos}
that the family
$$\{\mathcal{G}_{Z;v};\,\,\,;Z\in Ob(Top);\,\,\,v\in Y\}$$ is a
generating family of $\mathfrak{F}_{_{W;\bar{Y}}}$. Hence it remains
to show that $\mathcal{G}_{Z;v}$ lies in $T_{W;\bar{Y}}$, for any
$Z\in Ob(Top)$ and any $v\in \bar{Y}$. It is obvious for the trivial
valuation $v=v_0$. Take a non-trivial valuation $v\neq v_0$. One has
\begin{equation}
{\theta_v}_!(q_v^*(y(W_{k(v)}\times~
Z))):=y(W_K)\times^{y(W_{K_v})}y(W_{k(v)}\times~Z)=y(W_K\times^{W_{K_v}}(W_{k(v)}\times~Z)).
\end{equation}
as it is shown in the proof of (\cite{MatFlach} Lemma 13). Note that
$W_{K_v}$ acts on the right on $W_K$ and by left translation on the
first factor on $(W_{k(v)}\times~Z)$. This defines the quotient
space $(W_K\times^{W_{K_v}}(W_{k(v)}\times~Z))$. Then $W_{K}$ acts
on $(W_K\times^{W_{K_v}}(W_{k(v)}\times~Z))$ by left translations on
the first factor. We obtain
\begin{eqnarray}
\mathcal{G}_{Z;v}&=&({\theta_v}_!(q_v^*(y(W_{k(v)}\times
Z)));\,\,\,y(W_{k(v)}\times Z) ;\,\,\,
(\emptyset_{B_{W_{k(w)}}})_{w\neq{v_0;v}};\,\,\,g_{Z;v})  \\
  &=&\mathrm{y}\,\,(W_K\times^{W_{K_v}}(W_{k(v)}\times Z);\,\,\,(W_{k(v)}\times
  Z);\,\,\,
  (\emptyset_{Top})_{w\neq{v_0;v}};\,\,\,\widetilde{g_{Z;v}}\,)\,,
\end{eqnarray}
where $$\widetilde{g_{Z;v}}:(W_{k(v)}\times
Z)\longrightarrow~W_K\times^{W_{K_v}}(W_{k(v)}\times Z)$$ is the
unique map of $W_{K_v}$-topological spaces such that
$y(\widetilde{g_{Z;v}})=g_{Z;v}$ (recall that the Yoneda embedding
$y:B_{Top}W_{K_v}\rightarrow B_{W_{K_v}}$ is fully faithful). Hence
$\mathcal{G}_{Z;v}$ is an object of $T_{W;\bar{Y}}$ for any $Z\in
Ob(Top)$ and any $v\in \bar{Y}$.

\end{e-proof}

\begin{e-thm}\label{thm-principal-flasktopos}
The canonical morphism
$$\mathfrak{F}_{_{W;\bar{Y}}}\longrightarrow\widetilde{(T_{W;\bar{Y}};\mathcal{J}_{ls})}$$
is an equivalence of topoi, where
$\widetilde{(T_{W;\bar{Y}};\mathcal{J}_{ls})}$ is the category of
sheaves on the local section site. More generally, the canonical map
$$\mathfrak{F}_{_{L/K,S}}\longrightarrow\widetilde{(T_{L/K,S};\mathcal{J}_{ls})}$$
is an equivalence.
\end{e-thm}

\begin{e-proof}
The functor
$\mathrm{y}:T_{W;\bar{Y}}\rightarrow\mathfrak{F}_{_{W;\bar{Y}}}$ is
fully faithful, the topology $\mathcal{J}_{ls}$ is induced by the
canonical topology of $\mathfrak{F}_{_{W;\bar{Y}}}$ and
$T_{W;\bar{Y}}$ is a generating sub-category of
$\mathfrak{F}_{_{W;\bar{Y}}}$. The first claim of the theorem
follows from (\cite{SGA4} IV.1.2.1). More generally, the proofs of
(\ref{prop-functor-loc-sect-site-flask-topos}),
(\ref{localsection-induced-by-canonical}) and
(\ref{prop-T_WY-generates-F_WY}) remain valid for
$\mathfrak{F}_{_{L/K,S}}$, where $L/K$ and $S$ are both finite, by
replacing $W_{K}$ and $W_{K_v}$ with $W_{L/K,S}$ and
$\widetilde{W}_{K_v}$ respectively.
\end{e-proof}

\begin{e-cor}
The canonical map
$$\underrightarrow{lim}_{_{L/K,S}}H^n(T_{L/K,S},\mathbb{Z})\rightarrow
H^n(T_{W,\bar{Y}},\mathbb{Z})$$ is not an isomorphism for $n=2,3$.
\end{e-cor}
\begin{e-proof}
The cohomology of the site $T_{L/K,S}$ (respectively
$T_{W;\bar{Y}}$) is by definition the cohomology of the topos
$\widetilde{(T_{L/K,S};\mathcal{J}_{ls})}$ (respectively
$\widetilde{(T_{W;\bar{Y}};\mathcal{J}_{ls})}$) which is in turn
equivalent to $\mathfrak{F}_{_{L/K,S}}$ (respectively
$\mathfrak{F}_{_{W;\bar{Y}}}$) by Theorem
\ref{thm-principal-flasktopos}. Therefore this corollary is just a
reformulation of Theorem \ref{comparaison-limitcohomology-flask}.
\end{e-proof}

\begin{e-rem}
This corollary points out that Lichtenbaum's Weil-étale cohomology
is not defined as the cohomology of a site (i.e. of a topos).
\end{e-rem}

\section{The Artin-Verdier étale
topos}\label{section-Artin-Verdier-etaletopos}

The Artin-Verdier étale site associated to a number field takes the
(ramification at the) archimedean primes into account (see
\cite{Zink} and \cite{Bienenfeld}). This refinement of the étale
site is necessary if one wants to obtain naturally the vanishing of
the cohomology in degrees greater than three. Recall that $\bar{Y}$
is the set of valuations of a number field $K$.

\subsection{The Artin-Verdier étale site of
$\bar{Y}$.}\label{subsect-AV-etale-site}
Here, all schemes are separated and of finite type over
$Spec(\mathbb{Z})$. A \emph{connected $\bar{Y}$-scheme} is a pair
$\bar{X}=(X;X_{\infty})$, where $X$ is a connected $Y$-scheme in the
usual sense. When $X$ is empty, $X_{\infty}$ has to be (empty or) a
single point over $Y_{\infty}$. If $X$ is not empty, $X_{\infty}$ is
an connected open subset of $X(\mathbb{C})/\sim$. Here,
$X(\mathbb{C})/\sim$ is the quotient of the set of complex valued
points of $X$ under the equivalence relation defined by complex
conjugation, endowed with the quotient topology. A
\emph{$\bar{Y}$-scheme} is a finite sum of connected
$\bar{Y}$-schemes.

A \emph{connected étale $\bar{Y}$-scheme} is a connected
$\bar{Y}$-scheme $(X;X_{\infty})$, where $X/Y$ is étale of finite
presentation and $X_{\infty}/Y_{\infty}$ is unramified in the sense
that if $y\in Y_{\infty}$ is real, so is any point $x$ of
$X_{\infty}$ lying over $y$. An \emph{étale $\bar{Y}$-scheme}
$\bar{X}$ is a finite sum of connected étale $\bar{Y}$-schemes,
called the \emph{connected components} of $\bar{X}$. A morphism
$\bar{\phi}:(U;U_{\infty})\rightarrow (V;V_{\infty})$ of étale
$\bar{Y}$-schemes is given by a morphism $\phi:U\rightarrow V$ of
étale $Y$-schemes which induces a map
$\phi_{\infty}:U_{\infty}\rightarrow V_{\infty}$ over $Y_{\infty}$.
Fiber products
$\bar{U}\times_{\bar{X}}\bar{V}:=(U\times_{X}V;U_{\infty}\times_{X_{\infty}}V_{\infty})$
exist in the category $Et_{\bar{Y}}$ of étale $\bar{Y}$-schemes.

\begin{e-defn}
The \emph{Artin-Verdier étale site of $\bar{Y}$} is defined by the
category $Et_{\bar{Y}}$ of étale $\bar{Y}$-schemes endowed with the
topology $\mathcal{J}_{et}$ generated by the pre-topology for which
the coverings are the surjective families.
\end{e-defn}

\subsection{The specialization maps.}

Let $v$ be a valuation of $K$ corresponding to a point of $Y$. We
denote by $k(v)$ and $\overline{k(v)}$ the residue field at $v$ and
its algebraic closure. The henselization and the strict
henselization of $\bar{Y}$ at $v$ are defined as the projective
limits
$$Spec(\mathcal{O}^h_{\bar{Y};v}):=\underleftarrow{lim}\,\,\bar{U}\,\,\,\,\mbox{and}
\,\,\,\,Spec(\mathcal{O}^{sh}_{\bar{Y};v}):=\underleftarrow{lim}\,\,\bar{U},$$
where $\bar{U}$ runs over the filtered system of étale neighborhoods
of $v$ in $\bar{Y}$ and the filtered system of étale neighborhoods
of $\bar{v}$ in $\bar{Y}$ respectively. Here an étale neighborhood
of $v$ in $\bar{Y}$ (respectively of $\bar{v}$ in $\bar{Y}$) is
given by an étale $\bar{Y}$-scheme $\bar{U}$ endowed with a morphism
$(Spec(k(v));\emptyset)\rightarrow \bar{U}$ over $\bar{Y}$
(respectively endowed with a morphism
$(Spec(\overline{k(v)});\emptyset)\rightarrow \bar{U}$ over
$\bar{Y}$). Then for $v$ ultrametric, the local ring
$\mathcal{O}_{\bar{Y};v}^h:=\mathcal{O}_{\bar{K}}^{D_{v}}$ is
henselian with fraction field $K_v^h$ and with residue field $k(v)$.
Respectively, the local ring
$\mathcal{O}_{\bar{Y};v}^{sh}:=\mathcal{O}_{\bar{K}}^{I_{v}}$ is
strictly henselian with fraction field $K_v^{sh}$ and with residue
field $\overline{k(v)}$. For an archimedean valuation $v$, one has
$$(Spec(K_v^{sh});v)=(Spec(K_v^{h});v)=\underleftarrow{lim}\,\,\bar{U},$$
where $\bar{U}$ runs over the filtered system of $\bar{Y}$-morphisms
$(\emptyset;v)\rightarrow \bar{U}$. The choice of the valuation
$\bar{v}$ of $\bar{K}$ lying over $v$ induces an embedding
$$K_v^{sh}=\bar{K}^{I_v}\longrightarrow\bar{K}.$$
For any ultrametric valuation $v$, we get a \emph{specialization
map} over $\bar{Y}$
\begin{equation}\label{specialisationulrametric}
Spec(\bar{K})=(Spec(\bar{K});\emptyset)\longrightarrow
(Spec(\mathcal{O}^{sh}_{\bar{Y};v});\emptyset)=:\bar{Y}_{v}^{\,sh}.
\end{equation}
Such a specialization map over $\bar{Y}$ is also defined for a
archimedean valuation $v$ :
\begin{equation}\label{specializationarchimedean}
Spec(\bar{K})=(Spec(\bar{K});\emptyset)\longrightarrow
(Spec(K_v^{sh});v)=:\bar{Y}_{v}^{\,sh}.
\end{equation}
In what follows, $\bar{Y}_{v}^{\,sh}$ denotes the $\bar{Y}$-schemes
$(Spec(\mathcal{O}_{\bar{Y};v});\emptyset)$, $(Spec(K_v^{sh});v)$
and $Spec(\bar{K})=(Spec(\bar{K});\emptyset)$ for $v$ ultrametric,
archimedean and the trivial valuation respectively.

\subsection{The étale topos of $\bar{Y}$.}
The Artin-Verdier étale topos $\bar{Y}_{et}$ associated to the
Arakelov compactification of $Spec(\mathcal{O}_K)$ is the category
of sheaves of sets on the site $(Et_{\bar{Y}};\mathcal{J}_{et})$. We
denote by $Y_{et}$ the usual étale topos of the scheme $Y$.

\begin{prop}\label{YdansYbar-etale}
There is an open embedding
$$\varphi:Y_{et}\longrightarrow\bar{Y}_{et}$$
corresponding to the open inclusion
$Y:=(Y;\emptyset)\rightarrow\bar{Y}$. For any closed point $v$ of
$\bar{Y}$, there is a closed embedding (see \cite{SGA4} IV
Proposition 9.3.4)
$$u_v: B^{sm}_{G_{k(v)}}\longrightarrow\bar{Y}_{et}.$$
The closed complement of $Y_{et}$ in $\bar{Y}_{et}$ is the image of
the closed embedding
$$u:=(u_v)_{v\in Y_{\infty}}:\coprod_{v\in Y_{\infty}}\underline{Set}\longrightarrow\bar{Y}_{et}.$$
\end{prop}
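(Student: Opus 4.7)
The plan is to establish each of the three claims using the \cite{SGA4} IV.9.3 theory of open and closed sub-topoi associated to sub-terminal objects. For the open embedding $\varphi$, the morphism $(Y,\emptyset) \hookrightarrow (Y, Y_\infty) = \bar{Y}$ in $Et_{\bar{Y}}$ is a monomorphism, making $(Y,\emptyset)$ a sub-object of the final object of the site. Since $\mathcal{J}_{et}$ is sub-canonical (the coverings are surjective families, so the argument is essentially as in Proposition \ref{localsection-induced-by-canonical}), the representable sheaf $\mathcal{U}$ of $(Y,\emptyset)$ is a sub-terminal object of $\bar{Y}_{et}$, yielding an open embedding $\bar{Y}_{et}/\mathcal{U} \hookrightarrow \bar{Y}_{et}$. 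To identify $\bar{Y}_{et}/\mathcal{U}$ with $Y_{et}$, observe that any morphism $(V, V_\infty) \to (Y, \emptyset)$ in $Et_{\bar{Y}}$ forces $V_\infty = \emptyset$; the slice $(Et_{\bar{Y}})_{/(Y,\emptyset)}$ is thus canonically equivalent to $Et_Y$ via $(V, \emptyset) \mapsto V$, and the surjective-family topologies match on both sides.

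For the closed embedding $u_v$ at a closed point $v$ of $\bar{Y}$, I distinguish cases. If $v \in Y$ is ultrametric, the open $\bar{Y}$-scheme $(Y \setminus \{v\}, Y_\infty) \hookrightarrow \bar{Y}$ is a sub-terminal whose associated open sub-topos has $B^{sm}_{G_{k(v)}}$ as closed complement. I would define $u_v^*$ as the stalk at the geometric point $\bar{v}$: given $\mathcal{F} \in \bar{Y}_{et}$, set $u_v^*(\mathcal{F}) = \underrightarrow{lim}\,\mathcal{F}(\bar{U})$ over �tale neighborhoods of $\bar{v}$ in $\bar{Y}$, equipped with the natural continuous $G_{k(v)}$-action coming from the Galois action on $\mathcal{O}^{sh}_{\bar{Y},v}$. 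If $v \in Y_\infty$, then $G_{k(v)} = \{1\}$ so $B^{sm}_{G_{k(v)}} = \underline{Set}$, and the same stalk construction over �tale neighborhoods of $v$ produces a set. In each case, verifying that $u_v$ is a closed embedding reduces to checking the criterion of \cite{SGA4} IV.9.3.4: the essential image of the direct image $u_{v,*}$ must coincide with the full subcategory of sheaves whose restriction to the chosen complementary open is the final object.

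For the final claim, note that the family $\{(Y, \emptyset) \to \bar{Y}\} \cup \{(\emptyset, v) \to \bar{Y}\}_{v \in Y_\infty}$ is a covering of $\bar{Y}$ in $Et_{\bar{Y}}$ with pairwise empty intersections. A sheaf $\mathcal{F}$ lying in the closed complement of $\varphi:Y_{et} \hookrightarrow \bar{Y}_{et}$ is, by definition, one whose pullback to $Y_{et}$ is the final object; equivalently, $\mathcal{F}(V, \emptyset)=*$ for every $(V,\emptyset) \in Et_{\bar{Y}}$. Applying the sheaf axiom to the above covering, such an $\mathcal{F}$ is determined by the family of sets $(\mathcal{F}(\emptyset, v))_{v \in Y_\infty}$, and any family occurs. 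This yields the equivalence of the closed complement with $\coprod_{v \in Y_\infty}\underline{Set}$ (recalling that a coproduct of topoi has as underlying category the product of the underlying categories), and the identification is realized by the morphism $u = (u_v)_{v \in Y_\infty}$. The principal subtlety is the uniform treatment of archimedean places, for which no genuine scheme-theoretic residue field exists; the argument works because the archimedean primes have been incorporated into the site structure through the $X_\infty$ component of an �tale $\bar{Y}$-scheme.
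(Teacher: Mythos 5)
Your overall strategy is the one the paper uses, and each of the three claims is handled by an argument that does work. The differences are presentational and worth noting.

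For the closed embeddings $u_v$, you define $u_v^*$ directly as a stalk functor (a filtered colimit over \'etale neighborhoods of $\bar v$), while the paper first builds a morphism of left exact sites
$u_v^*: Et_{\bar Y}\to T^f_{G_{k(v)}}$, $\bar X\mapsto \bar X\times_{\bar Y}v$, and then reads off the induced morphism of topoi. Your route gives the same functor on sheaves, but you should at least remark (or verify) that the stalk functor preserves finite limits and arbitrary colimits so that it is indeed the inverse image of a geometric morphism; passing through the site-level fiber-product functor makes that automatic and is a bit cleaner. You state the closed-embedding criterion of \cite{SGA4} IV.9.3.4 but do not actually carry out the check that the essential image of $u_{v*}$ equals the closed complement of the appropriate open; the paper does this explicitly using the open $\varepsilon(\bar Y - v)$.

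For the last claim, you give a genuinely different argument from the paper. Where the paper invokes \cite{SGA4} IV.9.4.6 to identify the union of the closed images of the $u_v$ with the closed complement of $Y_{et}$, you use the \'etale covering $\{(Y,\emptyset)\}\cup\{(\emptyset,v)\}_{v\in Y_\infty}$ of $\bar Y$ with pairwise empty fiber products and show directly, via the sheaf axiom, that an object of the closed complement is determined by its values on the schemes $(\emptyset,v)$. This is more elementary and self-contained; it exploits the specific combinatorics of $Et_{\bar Y}$ (that the archimedean component $X_\infty$ of an \'etale $\bar Y$-scheme is a finite set of points), whereas the SGA4 citation is the general, site-independent fact. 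Both are valid; the cost of your version is that the identification of the closed complement with $\coprod_{v\in Y_\infty}\underline{Set}$ is only sketched at the level of global sections, so one should either spell out that the same decomposition holds over every object $\bar X$ of $Et_{\bar Y}$ (which is what really identifies the full subcategory with the product $\prod_{v\in Y_\infty}\underline{Set}$) or compare with the decomposition described in Corollary \ref{decomposition-topos-etale-Ybar} below.
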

\begin{e-proof}
The map $Y:=(Y;\emptyset)\rightarrow\bar{Y}$ is a monomorphism in
$Et_{\bar{Y}}$, hence the Yoneda embedding defines a sub-object
$\varepsilon(Y)$ of the final object of $\bar{Y}_{et}$. Thus the
localization morphism
\begin{equation}\label{YdansYbar}
{\bar{Y}_{et}}{_{/\varepsilon(Y)}}\longrightarrow\bar{Y}_{et}
\end{equation}
is an open embedding. The category $(Et_{\bar{Y}})_{/Y}$ is
isomorphic to the usual category $Et_Y$ of étale $Y$-schemes. Under
this identification, the usual étale topology $\mathcal{J}_{et}$ on
$Et_Y$ is the topology $\mathcal{J}_{ind}$ induced by the forgetful
functor
$$(Et_{\bar{Y}})_{_{/Y}}\rightarrow Et_{\bar{Y}},$$ where
$Et_{\bar{Y}}$ is endowed with the Artin-Verdier étale topology.
Moreover, one has an equivalence (see \cite{SGA4} III.5.4)
\begin{equation}\label{uneequivalence}
\widetilde{(Et_{Y};\mathcal{J}_{et})}\simeq\widetilde{((Et_{\bar{Y}})_{/Y};\mathcal{J}_{ind})}\simeq{\bar{Y}_{et}}{_{/\varepsilon(Y)}}.
\end{equation}
The first claim of the proposition follows from (\ref{YdansYbar})
and (\ref{uneequivalence}).

Assume that $v$ corresponds to an ultrametric valuation and denote
by $v\rightarrow\bar{Y}$ the morphism
$(Spec(k(v));\emptyset)\rightarrow\bar{Y}$. The functor
$$\fonc{u_v^*}{Et_{\bar{Y}}}{Et_{k(v)}\simeq T_{G_{k(v)}}^f}{(\bar{X}\rightarrow \bar{Y})}
{(\bar{X}\times_{\bar{Y}}v\rightarrow Spec(k(v)))}$$ is a morphism
of left exact sites, where $T_{G_{k(v)}}^f$ denotes the category of
finite $G_{k(v)}$-sets endowed with the canonical topology. We
denote by
$$u_v: B^{sm}_{G_{k(v)}}\longrightarrow\bar{Y}_{et}$$
the induced morphism of topoi. The adjunction transformation
$u_v^*\circ u_{v*}\rightarrow Id$ is an isomorphism (i.e. $u_v$ is
an embedding).

Assume now that $v$ is an archimedean valuation and denote by
$v\rightarrow\bar{Y}$ the morphism
$(\emptyset;v)\rightarrow\bar{Y}$. Again, the functor
$$\fonc{u_v^*}{Et_{\bar{Y}}}{\underline{Set}^f= T_{G_{k(v)}}^f}{\bar{X}\rightarrow \bar{Y}}
{\bar{X}\times_{\bar{Y}}v\rightarrow (\emptyset;v)}$$ is a morphism
of left exact sites, where $\underline{Set}^f$ is the category of
finite sets, endowed with the canonical topology. We get a embedding
of topoi
$$u_v: \underline{Set}\longrightarrow\bar{Y}_{et}.$$
For any $v\in\bar{Y}^0$, let $(\bar{Y}-v)\rightarrow\bar{Y}$ be the
open complement of the closed point $v$. Again,
$\varepsilon(\bar{Y}-v)$ is a sub-object of the final object of
$\bar{Y}_{et}$, which yields an open embedding
$j:{\bar{Y}_{et}}{_{/\varepsilon(\bar{Y}-v)}}\longrightarrow\bar{Y}_{et}$.
The strictly full subcategory of $\bar{Y}_{et}$ defined by the
objects $X$ such that $j^*(X)$ is the final object of
${\bar{Y}_{et}}{_{/\varepsilon(\bar{Y}_v)}}$ is exactly the
essential image of $u_{v*}$. In other words, the image of $u_{v}$ is
the closed complement of $j$. Hence, $u_v$ is a closed embedding.
The last claim of the proposition follows from (\cite{SGA4}
IV.9.4.6).
\end{e-proof}

\begin{e-cor}\label{conservative-family-etale-topos}
The family of functors
$$\{u_v^*:\bar{Y}_{et}\rightarrow
B^{sm}_{G_{k(v)}};\,\,v\in\bar{Y}^0\}$$ is conservative.
\end{e-cor}
\begin{e-proof}
By (\cite{SGA4} VIII.3.13), the family of functors
$$\{u_v^*:Y_{et}\rightarrow
B^{sm}_{G_{k(v)}};\,\,v\in Y^0\}$$ is conservative. By (\cite{SGA4}
IV 9.4.1.c), the result follows from Proposition
\ref{YdansYbar-etale}.
\end{e-proof}

Let $\mathcal{F}$ be an object of $Y_{et}$ and let
$\mathcal{F}_{v_0}\in Ob(B_{G_K}^{sm})$ be its generic stalk. For
any archimedean valuation $v$ one has
\begin{equation}\label{infinite-stalk-étale-sheaf}
u_v^*\circ\varphi_*(\mathcal{F})\simeq\mathcal{F}_{v_0}^{I_v}.
\end{equation}
Let $u:\coprod_{v\in
Y_{\infty}}\underline{Set}\rightarrow\bar{Y}_{et}$ be the morphism
given by the family $(u_v)_{v\in Y_{\infty}}$. Consider the functor
$$
\fonc{\rho:=u^*\varphi_*}{Y_{et}}{\coprod_{v\in Y_{\infty}}
\underline{Set}}{\mathcal{F}}{(\mathcal{F}_{v_0}^{I_v})_{v\in
Y_{\infty}}}
$$
Let us consider the category $(\coprod_{v\in Y_{\infty}}
\underline{Set}\,,Y_{et},\rho)$ defined in (\cite{SGA4} IV.9.5.1).

\begin{e-cor}\label{decomposition-topos-etale-Ybar}
The category $\bar{Y}_{et}$ is equivalent to $(\coprod_{v\in
Y_{\infty}} \underline{Set}\,,Y_{et},\rho)$.
\end{e-cor}
\begin{e-proof}
There is a functor
$$
\fonc{\Phi}{\bar{Y}_{et}}{(\coprod_{v\in Y_{\infty}}
\underline{Set}\,,Y_{et},\rho)}{\mathcal{F}}{(\varphi^*\mathcal{F},u^*\mathcal{F},f)}
$$
where $f:u^*\mathcal{F}\rightarrow u^*\varphi_*\varphi^*\mathcal{F}$
is given by adjunction. By (\cite{SGA4} IV.9.5.4.a) and Proposition
\ref{YdansYbar-etale}, the functor $\Phi$ is an equivalence of
categories.
\end{e-proof}

In particular, we have the usual sequences of adjoint functors
$$\varphi_!,\,\varphi^*,\,\varphi_* \mbox{ and } u^*,\,u_*,\,u^!.$$
between the categories of abelian sheaves on $\bar{Y}_{et}$,
$Y_{et}$ and $Y_{\infty}$ respectively. It follows that $u_*$ is
exact and $\varphi^*$ preserves injective objects since $\varphi_!$
is exact. For any abelian sheaf $\mathcal{A}$ on $\bar{Y}_{et}$, one
has the exact sequence
\begin{equation}\label{exact-sequ-etale-supp-cpct}
0\rightarrow\varphi_!\varphi^*\mathcal{A}\rightarrow\mathcal{A}\rightarrow
u_*u^*\mathcal{A}\rightarrow0,
\end{equation}
where the morphisms are given by adjunction. If $\mathcal{A}$ is a
sheaf on $Y_{et}$, the étale cohomology with compact support is
defined by
$H_c^n(Y_{et},\mathcal{A}):=H^n(\bar{Y}_{et},\varphi_!\mathcal{A})$.
To compute the étale cohomology with compact support, we use
(\ref{exact-sequ-etale-supp-cpct}) and we observe that the
cohomology of the sheaf $u_*u^*\mathcal{A}$ is trivial in degrees
$n\geq1$ since $u_*$ is exact.
For example, one has
\begin{equation}\label{cohom-compact-etale-Z}
H_c^n(Y_{et},\mathbb{Z})=0,(\prod_{Y_{\infty}}\mathbb{Z})/\mathbb{Z}
\mbox{ for $n=0,1$ and }
H_c^n(Y_{et},\mathbb{Z})=H^n(\bar{Y}_{et},\mathbb{Z}) \mbox{ for
$n\geq2$}.
\end{equation}
Consider now the constant étale sheaf associated to the discrete
abelian group $\mathbb{R}$. By Proposition
\ref{cohomology-artin-verdier-divisible} below, one has
\begin{equation}\label{cohom-compact-etale-R}
H_c^1(Y_{et},\mathbb{R})=(\prod_{Y_{\infty}}\mathbb{R})/\mathbb{R}
\mbox{ and } H_c^n(Y_{et},\mathbb{R})=0 \mbox{ for $n\neq1$.}
\end{equation}
\subsection{Artin-Verdier étale cohomology.}
Here we compute the Artin-Verdier étale cohomology with
$\mathbb{Z}$-coefficients. Let $j:Spec(K)\rightarrow Y\rightarrow
\bar{Y}$ be the inclusion of the generic point of $\bar{Y}$.
\begin{prop}\label{cohomology-artin-verdier-divisible}
For any uniquely divisible $G_K$-module $Q$, the sheaf $j_*Q$ on
$\bar{Y}_{et}$ is acyclic for the global sections functor, i.e.
$H^q(\bar{Y}_{et},j_*Q)=0$ for any $q\geq 1$. More generally, if $Q$
is a $G_K$-module such that $H^n(G_K,Q)=H^n(I_v,Q)=0$ for any
$n\geq1$ and any valuation $v$ of $K$, then the sheaf $j_*Q$ on
$\bar{Y}_{et}$ is acyclic for the global sections functor.
\end{prop}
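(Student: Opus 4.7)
The plan is to apply the Leray spectral sequence to the morphism $j : Spec(K)_{et} \to \bar{Y}_{et}$:
$$H^p(\bar{Y}_{et}, R^q j_* Q) \Rightarrow H^{p+q}(G_K, Q).$$
If I can show that $R^q j_* Q = 0$ for all $q \geq 1$, the spectral sequence degenerates and gives $H^p(\bar{Y}_{et}, j_* Q) \simeq H^p(G_K, Q)$, which vanishes for $p \geq 1$ by the hypothesis. So the whole argument reduces to a stalk computation.

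First I would reduce the first assertion to the second by proving the standard lemma: for any profinite group $G$ and any discrete uniquely divisible $G$-module $M$, one has $H^n(G,M) = 0$ for $n \geq 1$. The point is that for each open normal $H \trianglelefteq G$ the submodule $M^H$ is again uniquely divisible (existence of division by $n$ comes from divisibility of $M$, and the divisor lies in $M^H$ by the uniqueness of its preimage in $M$); then $H^n(G/H, M^H)$ is killed both by $|G/H|$ and by unique division, hence vanishes, and $H^n(G,M) = \varinjlim_H H^n(G/H, M^H) = 0$. Applied to $G = G_K$ and to each $G = I_v$, this shows the first statement follows from the second.

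For the stalk computation, by Corollary \ref{conservative-family-etale-topos} the family $\{u_v^* : \bar{Y}_{et} \to B^{sm}_{G_{k(v)}}\}_{v \in \bar{Y}^0}$ is conservative, so it suffices to show $u_v^* R^q j_* Q = 0$ for every closed point $v$ and every $q \geq 1$. I would use the description of the (strict) henselization $\bar{Y}_v^{sh}$ recalled in Section \ref{section-Artin-Verdier-etaletopos}: in the ultrametric case $\bar{Y}_v^{sh} = (Spec(\mathcal{O}_{\bar{Y},v}^{sh}); \emptyset)$, and in the archimedean case $\bar{Y}_v^{sh} = (Spec(K_v^{sh}); v)$. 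In both cases $Spec(K) \times_{\bar{Y}} \bar{Y}_v^{sh} = Spec(K_v^{sh})$, since the archimedean component of the generic point of $\bar{Y}$ is empty. Hence the usual formula for stalks of higher direct images along an open immersion yields
$$u_v^* R^q j_* Q \;=\; H^q(Spec(K_v^{sh})_{et}, Q) \;=\; H^q(\mathrm{Gal}(\bar{K}/K_v^{sh}), Q) \;=\; H^q(I_v, Q),$$
viewed as a $G_{k(v)}$-module, which vanishes for $q \geq 1$ by hypothesis.

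The main obstacle is the archimedean case of the stalk computation: the Artin-Verdier site handles archimedean places differently from ordinary closed points of a scheme, so one must check carefully that the proper-base-change/stalk formula for $R^q j_*$ still holds when the "strict henselization" is the somewhat unusual object $(Spec(K_v^{sh}); v)$. Once this is verified (either directly from the explicit description of $Et_{\bar{Y}}$ using the equivalence $\bar{Y}_{et} \simeq (\coprod_{v \in Y_\infty} \underline{Set}, Y_{et}, \rho)$ from Corollary \ref{decomposition-topos-etale-Ybar}, or by observing that the archimedean stalk is simply $\varphi_* Q$ at $v$, which equals $Q^{I_v}$ in degree $0$ and the higher cohomology $H^q(I_v,Q)$ in higher degrees), the conclusion is immediate.
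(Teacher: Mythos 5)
Your proof follows essentially the same route as the paper's: compute $u_v^* R^q(j_*)Q = H^q(I_v,Q)$ via the stalk formula at strict henselizations (the archimedean case being exactly equation (\ref{infinite-stalk-étale-sheaf}) recorded just before the proposition), invoke conservativity of $\{u_v^*\}$ from Corollary \ref{conservative-family-etale-topos} to deduce $R^q(j_*)Q = 0$ for $q \geq 1$, and then apply the Leray spectral sequence to identify $H^n(\bar{Y}_{et},j_*Q)$ with $H^n(G_K,Q)$, which vanishes. The only difference is expository: you spell out the reduction of the uniquely divisible case to the cohomological-vanishing hypothesis, whereas the paper does the uniquely divisible case directly and remarks that the general case is identical.
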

\begin{e-proof}
Assume that $Q$ is uniquely divible The more general case follows
from the same argument. For any $v\in\bar{Y}^0$, one has (see
(\ref{infinite-stalk-étale-sheaf})) :
$$u_v^*\,R^q(j_*){Q}=R^q(u_v^*j_*){Q}=H^q(I_v;{Q}).$$ The groups $I_v$
are all profinite (or finite) and $Q$ is uniquely divisible. We
obtain
$$u_v^*\,R^q(j_*)Q=0$$ for any $q\geq 1$. By Corollary
\ref{conservative-family-etale-topos}, we get $R^q(j_*)(Q)=0$ for
any $q\geq 1$. Then the Leray spectral sequence
$$H^p(\bar{Y}_{et};R^q(j_*)Q)\Longrightarrow H^{p+q}(G_K;Q)$$
yields $$H^n(\bar{Y}_{et};j_*Q)\simeq H^{n}(G_K;Q)=0$$ for any
$n\geq1$, since Galois cohomology is torsion.

\end{e-proof}
Let $Pic(Y)$ and $U_K$ be the class-group of and the unit group of
$K$ respectively. Let $r_1$ be number of real primes of $K$. We
denote by $A^D=Hom(A;\mathbb{Q}/\mathbb{Z})$ the dual of a finitely
generated abelian group (or a profinite group) $A$. We consider the
idèle class group $C_K$ of $K$ and the connected component $D_K$ of
$1\in C_K$. The cohomology of the global Galois group $G_K$ with
coefficients in $\mathbb{Z}$ is trivial in odd degrees and we have
$H^q(G_K;\mathbb{Z})=\mathbb{Z}, (C_K/D_K)^{D},
(\mathbb{Z}/2\mathbb{Z})^{r_1}$ for $r=0$, $r=2$ and $r\geq4$ even
respectively (see \cite{Milne2} I. Corollary 4.6).

\begin{prop}\label{art-Verd-cohomology-of-Z}
The cohomology of the Artin-Verdier étale topos with coefficients in
$\mathbb{Z}$ is given by
\begin{eqnarray*}
H^q(\bar{Y}_{et};\mathbb{Z})&=&\mathbb{Z} \mbox{ for }q=0,\\
                           &=&0 \mbox{ for }q=1,         \\
                           &=& Pic(Y)^D\mbox{ for }q=2,\\
                           &=& U_K^D\mbox{ for }q=3,\\
                           &=& 0\mbox{ for } q\geq 4.
\end{eqnarray*}
\end{prop}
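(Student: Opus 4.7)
The strategy is to use the short exact sequence of constant sheaves $0\to\mathbb{Z}\to\mathbb{Q}\to\mathbb{Q}/\mathbb{Z}\to 0$ on $\bar{Y}_{et}$ to reduce the computation to one of $H^{*}(\bar{Y}_{et};\mathbb{Q}/\mathbb{Z})$, which will in turn be attacked via the Leray spectral sequence of the generic point inclusion $j:\mathrm{Spec}(K)\hookrightarrow\bar{Y}$. First, $H^{0}(\bar{Y}_{et};\mathbb{Z})=\mathbb{Z}$ is immediate from the connectedness of $\bar{Y}$. The constant sheaf $\mathbb{Q}$ on $\bar{Y}_{et}$ is canonically isomorphic to $j_{*}\mathbb{Q}$, because on any connected \'etale $\bar{U}\to\bar{Y}$ both sheaves take the value $\mathbb{Q}^{G_{K(\bar{U})}}=\mathbb{Q}$; hence Proposition~\ref{cohomology-artin-verdier-divisible} gives $H^{q}(\bar{Y}_{et};\mathbb{Q})=0$ for every $q\geq 1$. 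The long exact sequence associated to the resolution then yields $H^{1}(\bar{Y}_{et};\mathbb{Z})=0$ (the map $\mathbb{Q}\to\mathbb{Q}/\mathbb{Z}$ being onto on global sections) together with natural isomorphisms $H^{n+1}(\bar{Y}_{et};\mathbb{Z})\simeq H^{n}(\bar{Y}_{et};\mathbb{Q}/\mathbb{Z})$ for $n\geq 1$.

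The second step is to analyse $H^{*}(\bar{Y}_{et};\mathbb{Q}/\mathbb{Z})$ through the Leray spectral sequence
$$E_{2}^{p,q}=H^{p}(\bar{Y}_{et};R^{q}j_{*}\mathbb{Q}/\mathbb{Z})\;\Longrightarrow\;H^{p+q}(G_{K};\mathbb{Q}/\mathbb{Z}),$$
whose abutment is determined from the given $H^{*}(G_{K};\mathbb{Z})$ by the same resolution. Again $j_{*}\mathbb{Q}/\mathbb{Z}$ coincides with the constant sheaf. For $q\geq 1$ the sheaf $R^{q}j_{*}\mathbb{Q}/\mathbb{Z}$ is supported on $\bar{Y}^{0}$ with stalk $H^{q}(I_{v};\mathbb{Q}/\mathbb{Z})$ at $v$: local class field theory identifies $R^{1}j_{*}\mathbb{Q}/\mathbb{Z}|_{v}$ with the $G_{k(v)}$-module $(\mathcal{O}_{K_{v}^{sh}}^{\times})^{D}$ for $v$ ultrametric, while the cohomological dimension $\mathrm{cd}(I_{v})=1$ forces $R^{q}j_{*}\mathbb{Q}/\mathbb{Z}|_{v}=0$ for $q\geq 2$ at such places; at complex $v$ all higher direct images vanish, and at real $v$ one finds $\mathbb{Z}/2$ in odd degrees and $0$ in positive even degrees. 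Since these higher direct images are skyscraper sheaves at closed points, where $u_{v*}$ is exact and preserves injectives, computing the $E_{2}$ page reduces to classical group cohomology of the local Weil groups $G_{k(v)}$.

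The final step is to unravel the low-degree terms using the global reciprocity exact sequence relating $U_{K}$, $\prod_{v\in\bar{Y}^{0}}\mathcal{O}_{K_{v}^{sh}}^{\times}$, $C_{K}/D_{K}$, and $Pic(Y)$; taking its Pontryagin dual and feeding it into the edge maps of Leray yields $H^{2}(\bar{Y}_{et};\mathbb{Z})\simeq H^{1}(\bar{Y}_{et};\mathbb{Q}/\mathbb{Z})=Pic(Y)^{D}$ and $H^{3}(\bar{Y}_{et};\mathbb{Z})\simeq H^{2}(\bar{Y}_{et};\mathbb{Q}/\mathbb{Z})=U_{K}^{D}$. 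For $q\geq 4$ the only surviving contributions to $H^{*}(G_{K};\mathbb{Q}/\mathbb{Z})$ come from the real places through the $(\mathbb{Z}/2)^{r_{1}}$ factors of the given Galois cohomology, and these are absorbed exactly by the real-archimedean stalks of $R^{*}j_{*}\mathbb{Q}/\mathbb{Z}$ via the edge maps, so that $E_{\infty}^{p,0}=0$ and hence $H^{q}(\bar{Y}_{et};\mathbb{Z})=0$. The main obstacle will be controlling the $d_{2}$-differentials and edge maps so that the identifications with $Pic(Y)^{D}$ and $U_{K}^{D}$ drop out of the global reciprocity sequence rather than being built by hand; a subsidiary difficulty is the careful bookkeeping at the real places, which contribute in every odd degree and must be matched against the archimedean summands of $H^{*}(G_{K};\mathbb{Z})$ on the nose.
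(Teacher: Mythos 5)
Your route is a close cousin of the paper's, not a fundamentally different one, but it reorganizes the argument in a way worth comparing. The paper works directly with $\mathbb{Z}$-coefficients in the Leray spectral sequence for $j\colon \operatorname{Spec} K\hookrightarrow\bar{Y}$: it exploits that $R^{q}j_{*}\mathbb{Z}=0$ for odd $q$ (since $H^{q}(I_{v},\mathbb{Z})=0$ for $q$ odd and $I_v$ profinite), identifies $R^{2}j_{*}\mathbb{Z}$ and the higher even $R^{q}$ via local class field theory, and reads off $H^{*}(\bar{Y}_{et},\mathbb{Z})$ against the known $H^{*}(G_{K},\mathbb{Z})$. You prepend the short exact sequence $0\to\mathbb{Z}\to\mathbb{Q}\to\mathbb{Q}/\mathbb{Z}\to 0$ together with Proposition~\ref{cohomology-artin-verdier-divisible} (which the paper proves but then only invokes as a closing remark) to trade $H^{n+1}(\bar{Y}_{et},\mathbb{Z})$ for $H^{n}(\bar{Y}_{et},\mathbb{Q}/\mathbb{Z})$, and then run the same Leray argument with torsion coefficients. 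What this buys is a cleaner conceptual framing within the usual torsion-coefficient duality formalism; what it costs is that the $\mathbb{Q}/\mathbb{Z}$ spectral sequence has a nontrivial $q=1$ row which the $\mathbb{Z}$ version does not. In particular you acquire a potential $E_{2}^{1,1}=\bigoplus_{v\nmid\infty}H^{1}(G_{k(v)},H^{1}(I_{v},\mathbb{Q}/\mathbb{Z}))$, whose vanishing is a genuine extra lemma (it follows, e.g., from Hochschild--Serre for $I_v\lhd G_{K_v}$ applied to $\mathbb{Q}/\mathbb{Z}$ together with $\operatorname{scd}(G_{K_v})=\operatorname{scd}(G_{k(v)})=2$), and which the paper's choice of $\mathbb{Z}$-coefficients gets for free.

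Two smaller cautions. The stalk of $R^{1}j_{*}\mathbb{Q}/\mathbb{Z}$ at an ultrametric $\bar{v}$ is $H^{1}(I_{v},\mathbb{Q}/\mathbb{Z})=\operatorname{Hom}_{c}(I_{v},\mathbb{Q}/\mathbb{Z})$ with the Frobenius conjugation action, not $(\mathcal{O}_{K_{v}^{sh}}^{\times})^{\mathcal{D}}$: the latter is a pro-discrete, non-discrete topological group and cannot be a stalk of an abelian sheaf on the small \'etale site. What local class field theory of $K_{v}$ identifies with $(\mathcal{O}_{K_{v}}^{\times})^{D}$ (no strict henselization) is the $G_{k(v)}$-invariants of this stalk, i.e.\ the contribution to $E_{2}^{0,1}$; this is also the precise statement behind the paper's formula for $R^{2}j_*\mathbb{Z}$. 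Second, $E_{\infty}^{p,0}=0$ for $p\geq 3$ is not by itself $E_{2}^{p,0}=0$; one must also argue that the incoming differentials $d_{r}\colon E_{r}^{p-r,r-1}\to E_{r}^{p,0}$ vanish. You flag this correctly as the main remaining work; note the paper has the analogous burden (its only possible source is $E_{2}^{p-3,2}$), so this is not a defect of your variant so much as a shared cost of the method.
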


\begin{e-proof}As in the proof of Proposition \ref{cohomology-artin-verdier-divisible}, we get
$$u_v^*(R^q(j_*)\mathcal{L})=R^q(u_v^*j_*)\mathcal{L}=H^q(I_v;\mathcal{L})\,\,\in Ob(B^{sm}_{G_{k(v)}})$$
for any $\mathcal{L}\in Ob(B^{sm}_{G_{K}})$ and any $v\in\bar{Y}$
(recall that $I_{v_0}$ is trivial). In particular, one has
$j_*\mathbb{Z}=\mathbb{Z}$ and
$$j^*R^q(j_*)=R^q(j^*j_*)=R^q(Id)=0$$
for any $q\geq1$, since $j$ is an embedding. Moreover the map
$R^q(j_*)\mathcal{L}\rightarrow\Prod_{v\in\bar{Y}^0}u_{v*}u_v^*R^q(j_*)\mathcal{L}$
given by adjunction factors through
$\sum_{v\in\bar{Y}^0}u_{v*}H^q(I_v;\mathcal{L})$, since a cohomology
class in $H^q(G_K,\mathcal{L})$ is unramified almost everywhere. The
induced map
$$R^q(j_*)\mathcal{L}\longrightarrow\sum_{v\in\bar{Y}^0}u_{v*}H^q(I_v;\mathcal{L})$$
is an isomorphism using Corollary
\ref{conservative-family-etale-topos} and the fact that $u_v^*$
commutes with sums. We obtain $R^q(j_*)\mathbb{Z}=0$ for $q$ odd. By local class field
theory, we have
$$R^2(j_*)\mathbb{Z}=\sum_{v\in{Y}^0}u_{v*}(\mathcal{O}^{\times}_{K_v})^{D}\sum_{v\in
K(\mathbb{R})}u_{v*}(\mathbb{Z}/2\mathbb{Z})^{D}$$ and
$R^q(j_*)\mathbb{Z}=\sum_{v\in
K(\mathbb{R})}u_{v*}(\mathbb{Z}/2\mathbb{Z})^{D}$ for $q\geq4$ even.
The Leray spectral sequence
$$H^p(\bar{Y}_{et},R^q(j_*)\mathbb{Z})\Longrightarrow H^{p+q}(G_K,\mathbb{Z})$$
yields the exact sequence
$$0\rightarrow H^2(\bar{Y}_{et},\mathbb{Z})\rightarrow (C_K/D_K)^D\rightarrow
\sum_{v\nmid\infty}(\mathcal{O}^{\times}_{K_v})^{D}\sum_{v\in
K(\mathbb{R})}(\mathbb{Z}/2\mathbb{Z})^{D} \rightarrow
H^3(\bar{Y}_{et},\mathbb{Z})\rightarrow H^3(G_K,\mathbb{Z})=0$$
where the central map is the Pontryagin dual of the canonical
morphism .
$$\prod_{v\nmid\infty}(\mathcal{O}^{\times}_{K_v})\prod_{v\in
K(\mathbb{R})}\mathbb{Z}/2\mathbb{Z} \longrightarrow(C_K/D_K).$$ The
result follows for $q\leq3$. Next the Leray spectral sequence yields
$$0\rightarrow H^q(\bar{Y}_{et},\mathbb{Z})\rightarrow H^q(G_K,\mathbb{Z})\rightarrow
\sum_{v\in K(\mathbb{R})}\mathbb{Z}/2\mathbb{Z}\rightarrow
H^{q+1}(\bar{Y}_{et},\mathbb{Z})\rightarrow
H^{q+1}(G_K,\mathbb{Z})=0$$ for any even $q\geq4$. This ends the
proof since the map $H^q(G_K,\mathbb{Z})\rightarrow \sum_{v\in
K(\mathbb{R})}\mathbb{Z}/2\mathbb{Z}$ is an isomorphism.
\end{e-proof}
The cohomology groups $H^n(\bar{Y}_{et},\mathbb{Z})$ for $n=0,1,2$
can also be deduced from unramified class field theory (i.e.
$\pi_1(\bar{Y}_{et})^{ab}\simeq Cl(K)$), using Proposition
\ref{cohomology-artin-verdier-divisible}.

\section{The morphism from the flask topos to the étale
topos}\label{section-morphism-zeta}

In this section we describe the relation between the flask topos and
the étale topos. There is a morphism of topoi from the full flask
topos $\mathfrak{F}_{_{W;\bar{Y}}}$ to $\bar{Y}_{et}$. However this
morphism does not factor through $\mathfrak{F}_{_{L/K,S}}$, and we
have to decompose the étale topos as a projective limit in order to
understand the relation between the projective system of topoi
$\mathfrak{F}_{_{\bullet}}$ and $\bar{Y}_{et}$.

\subsection{The morphism from the étale site to the local section site.}

By Corollary \ref{cor-loc-section-sub-canon}, the local section
topology $\mathcal{J}_{ls}$ on the category $T_{W;\bar{Y}}$ is
sub-canonical. Since $T_{W;\bar{Y}}$ has finite projective limits,
$(T_{W;\bar{Y}};\mathcal{J}_{ls})$ is what we call a left exact
site.

\begin{prop}\label{prop-morph-sites-etale-loc-sections}
There exists a morphism of left exact sites
$$\fonc{\zeta^*}{(Et_{\bar{Y}};\mathcal{J}_{et})}{(T_{W;\bar{Y}};;\mathcal{J}_{ls})}{\bar{X}}{\mathcal{X}}.$$
\end{prop}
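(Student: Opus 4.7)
The plan is to define $\zeta^*(\bar{X})=(X_v;f_v)_{v\in\bar{Y}}$ by setting $X_v := Hom_{\bar{Y}}(\bar{Y}^{\,sh}_v,\bar{X})$ for each valuation $v$ of $K$, and taking $f_v$ to be the map induced by precomposition with the specialization map $Spec(\bar{K}) \to \bar{Y}^{\,sh}_v$ constructed in Section \ref{section-Artin-Verdier-etaletopos}. Since $\bar{X}\to\bar{Y}$ is étale of finite presentation, each $X_v = \varinjlim_{\bar{U}} \bar{X}(\bar{U})$ is a filtered union of finite sets and so carries a natural discrete topology. The action of $G_{k(v)}$ on $\bar{Y}^{\,sh}_v$ (respectively of $G_K$ on $Spec(\bar{K})$ when $v=v_0$) induces a continuous action on $X_v$, which I pull back along the continuous morphism $W_{k(v)}\to G_{k(v)}$ (respectively $\alpha_{v_0}:W_K\to G_K$) to obtain the required $W_{k(v)}$-action.

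I would first check that each $f_v:X_v\to X_{v_0}$ is $W_{K_v}$-equivariant with respect to the pullback actions along $q_v$ and $\theta_v$. This is a formal consequence of the commutative squares introduced in the Galois/Weil subsection of Section 2 linking $\theta_v, \alpha_{K_v}, \alpha_{v_0}, \mathfrak{o}_v$ and $q_v, \alpha_{K_v}, \mathfrak{q}_v$, together with the fact that the specialization map $Spec(\bar{K})\to\bar{Y}^{\,sh}_v$ is equivariant for the $D_v$-action (acting on $\bar{K}$ through $\mathfrak{o}_v:D_v\hookrightarrow G_K$, and on $\bar{Y}^{\,sh}_v$ through the quotient $\mathfrak{q}_v:D_v\twoheadrightarrow G_{k(v)}$). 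Once this compatibility is in place, the assignment $\bar{X}\mapsto\zeta^*(\bar{X})$ is manifestly functorial.

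Left exactness is then straightforward: the final object $\bar{Y}$ of $Et_{\bar{Y}}$ is sent to the singleton-valued family with identity transition maps, which is the final object of $T_{W;\bar{Y}}$, and the universal property of fiber products of schemes gives $Hom_{\bar{Y}}(\bar{Y}^{\,sh}_v,\bar{X}_1\times_{\bar{X}}\bar{X}_2)\simeq X_{1,v}\times_{X_v}X_{2,v}$, compatibly with the transition maps, matching the componentwise fiber product in $T_{W;\bar{Y}}$ described in Section 4.1.

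The main obstacle is continuity of $\zeta^*$. Since each $X_v$ is discrete, a local section cover of $X_v$ is simply a surjection of sets, so it suffices to show that any surjective family $\{\bar{X}_i\to\bar{X}\}$ in $Et_{\bar{Y}}$ induces a surjection $\coprod_i X_{i,v}\to X_v$ for every valuation $v$. For $v$ ultrametric, this is the standard étale-local lifting at strictly henselian points: a morphism $\bar{Y}^{\,sh}_v\to\bar{X}$ yields a point of $\bar{X}$ over $v$, which lifts to some $\bar{X}_i$ by surjectivity, and extends uniquely to a $\bar{Y}^{\,sh}_v$-point of $\bar{X}_i$ by the étale lifting property over the strictly henselian local ring $\mathcal{O}^{sh}_{\bar{Y};v}$. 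The generic case $v=v_0$ reduces, via the same lifting argument, to the fact that $\bar{K}$ is algebraically closed. For $v$ archimedean, the unramifiedness condition built into the definition of étale $\bar{Y}$-schemes (real lying over real) guarantees the required surjectivity on the archimedean component $v\to X_{i,\infty}\to X_\infty$, which combined with the étale lifting on the $Spec(K_v^{sh})$ part produces the needed section. This verifies that $\zeta^*$ is continuous, completing the proof.
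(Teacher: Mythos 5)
Your construction of $\zeta^*$ coincides with the paper's: set $X_v:=Hom_{\bar{Y}}(\bar{Y}^{\,sh}_v,\bar{X})$ with its natural $G_{k(v)}$-action pulled back to $W_{k(v)}$, let $f_v$ be precomposition with the specialization $Spec(\bar{K})\to\bar{Y}^{\,sh}_v$, check componentwise that final objects and fiber products are preserved, and observe that an étale surjective family yields a surjective family of finite (hence discrete) $W_{k(v)}$-sets, i.e.\ a local-section cover. You spell out the equivariance and the lifting over henselian local rings in more detail than the paper, which compresses the continuity step into one sentence, but the argument is the same. (One small inaccuracy: each $X_v$ is already finite — the étale $\bar{Y}$-schemes here are of finite presentation — rather than merely a filtered union of finite sets, but this does not affect your argument since discreteness is all that is used.)
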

\begin{e-proof}
Let $\bar{X}$ be an étale $\bar{Y}$-scheme. For any valuation $v$, we define 
$$X_v:=Hom_{\bar{Y}}(\bar{Y}_{v}^{\,sh};\bar{X}).$$
Note that, for any ultrametric valuation $v$, the set
$$X_v=Hom_{\bar{Y}}(\bar{Y}_{v}^{\,sh};\bar{X})=Hom_{\bar{Y}}(Spec(\overline{k(v)});\bar{X})$$
carries an action of $G_{k(v)}$. For any archimedean valuation $v$,
$$X_v=Hom_{\bar{Y}}(\bar{Y}_{v}^{\,sh};\bar{X})=Hom_{\bar{Y}}((\emptyset;\bar{v});\bar{X}),$$
is just a set. For the trivial valuation $v=v_0$,
$$X_{v_0}=Hom_{\bar{Y}}(\bar{Y}_{v_0}^{\,sh};\bar{X})=Hom_{\bar{Y}}(Spec(\bar{K});\bar{X})$$ is
a $G_K$-set. For any valuation $v$, $X_v$ is viewed as a
$W_{k(v)}$-topological space. The morphisms
(\ref{specialisationulrametric}) and
(\ref{specializationarchimedean}) yield maps of $W_{K_v}$-spaces
$f_v:X_v\rightarrow X_{v_0}$, for any $v$. So we get an object
$\zeta^*(\bar{X})=\mathcal{X}$ of $T_{W;\bar{Y}}$. Clearly,
$\zeta^*$ is a functor. It preserves final objects and fiber
products, by the universal property of fiber products in the
category $Et_{\bar{Y}}$. Hence $\zeta^*$ is left exact. Furthermore,
an étale cover $\{\bar{X_i}\rightarrow\bar{X};\,i\in I\}$ yields a
surjective family of finite $G_{k(v)}$-sets $\{X_{i,v}\rightarrow
X_v;\,i\in I\}$ for any valuation $v$, hence a local section cover.
It follows that $\zeta^*$ is continuous and left exact.
\end{e-proof}

This morphism of left exact sites induces a morphism of topoi. Hence
the next result follows from Theorem \ref{thm-principal-flasktopos}.
\begin{e-cor}
There is a morphism of topoi
$\zeta:\mathfrak{F}_{_{W;\bar{Y}}}\rightarrow\bar{Y}_{et}$.
\end{e-cor}

The next proposition is an application of (Grothendieck) Galois
Theory. This result will not be used in the remaining part of this
paper. A proof can be found in \cite{these}.

\begin{prop}\label{prop-descript-et-topo-ds-loc-section}
The functor $\mathcal{\zeta^*}$ is fully faithful. The essential
image of $\mathcal{\zeta^*}$ is defined by the objects $\mathcal{X}$
of $T_{W;\bar{Y}}$ such that $X_{v_0}$ is a finite set, $f_v$ is
injective for any $v$ and bijective for almost all $v$ (i.e. except
for a finite number of non-trivial valuations). Finally, the étale
topology $\mathcal{J}_{et}$ on $Et_{\bar{Y}}$ is induced via
$\zeta^*$ by the local section topology $\mathcal{J}_{ls}$ on
$T_{W;\bar{Y}}$.
\end{prop}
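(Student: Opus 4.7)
The strategy is Grothendieck Galois theory adapted to the Artin--Verdier setting: an étale $\bar{Y}$-scheme is determined by its generic fibre as a finite continuous $G_K$-set equipped with an unramification condition at every place, and the datum $\zeta^*(\bar{X})=(X_v;f_v)$ records precisely this information.

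First, I would compute $\zeta^*(\bar{X})$ componentwise. Decomposing $\bar{X}$ into connected components reduces to the case of a connected étale $\bar{Y}$-scheme, which corresponds via Galois theory to a finite separable $K$-algebra associated to an intermediate number field. In particular $X_{v_0}=\mathrm{Hom}_{\bar{Y}}(\mathrm{Spec}\,\bar{K},\bar{X})$ is a finite $G_K$-set whose action factors through some $G(L/K)$. For any ultrametric valuation $v$, Hensel's lemma applied to $\mathcal{O}^{sh}_{\bar{Y},v}$ identifies $X_v$ with the $I_v$-fixed points of $X_{v_0}$, so that $f_v$ is the canonical inclusion $X_{v_0}^{I_v}\hookrightarrow X_{v_0}$; this is always injective, and bijective at every $v$ at which $L/K$ is unramified, hence for all but finitely many $v$. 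The unramification condition built into the definition of $Et_{\bar{Y}}$ at the archimedean places yields the analogous identification there. This shows $\zeta^*(\bar{X})$ lies in the claimed essential image.

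Second, I would establish fully faithfulness and essential surjectivity onto this subcategory. A morphism in $Et_{\bar{Y}}$ is determined by its action on the generic fibre by flat descent, and any $G_K$-equivariant map $X_{v_0}\to X'_{v_0}$ automatically sends $X_{v_0}^{I_v}$ into $(X'_{v_0})^{I_v}$, so it is compatible with the maps $f_v$; this yields fully faithfulness. For essential surjectivity, start with $\mathcal{X}=(X_v;f_v)$ in the claimed image. Continuity of the $W_K$-action on the finite set $X_{v_0}$ combined with density of the image of $W_K$ in $G_K$ forces the action to factor through some finite Galois quotient $G(L/K)$; Galois theory then produces a finite étale $K$-algebra, hence an étale $K$-scheme that extends uniquely to an étale $Y$-scheme over the open subset of $Y$ where $f_v$ is bijective (where $L/K$ is unramified). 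The data of $X_v$ and $f_v$ at the finitely many remaining non-archimedean places prescribes the decomposition--inertia behaviour and thus extends this scheme canonically to an étale $Y$-scheme, while the archimedean data $(X_v)_{v\mid\infty}$ supplies the open subset $X_\infty\subset X(\mathbb{C})/\sim$ required by the definition of $Et_{\bar{Y}}$.

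Third, the topology claim reduces to a componentwise statement: for an object in the essential image, every $X_v$ is a finite discrete $W_{k(v)}$-set, and on such sets a local-section cover is simply a surjective family of $W_{k(v)}$-equivariant maps. By the equivalence just established, this corresponds exactly to the surjective families defining $\mathcal{J}_{et}$, so $\mathcal{J}_{et}$ is the topology induced by $\mathcal{J}_{ls}$ via $\zeta^*$. The main obstacle lies in the reconstruction step of the second paragraph: one must simultaneously handle the non-archimedean bad places, where the strictly injective maps $f_v$ encode decomposition groups of a ramified extension, and the archimedean data, which must be glued into an open subset of $X(\mathbb{C})/\sim$ compatible with the Zink--Bienenfeld unramification hypothesis. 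Making this reconstruction canonical, and in particular functorial in the input $\mathcal{X}$, is the only non-formal part of the argument.
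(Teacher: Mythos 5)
The general strategy — Grothendieck Galois theory adapted to the Artin--Verdier setting — is the one the paper itself hints at (it cites Morin's thesis for details), so your approach is aligned with the paper's. But your proof has a genuine error at a load-bearing point.

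You claim that Hensel's lemma identifies $X_v$ with $X_{v_0}^{I_v}$, so that $f_v$ is the canonical inclusion $X_{v_0}^{I_v}\hookrightarrow X_{v_0}$. This is false. What is true is that $f_v$ \emph{factors through} $X_{v_0}^{I_v}$, i.e.\ $f_v$ is an injection $X_v\hookrightarrow X_{v_0}^{I_v}\subseteq X_{v_0}$; but the image may be a \emph{proper} $W_{k(v)}$-stable subset. The simplest counterexample is $\bar X=(Y\setminus\{p\};\,Y_\infty)$ for an unramified prime $p$: here $X_{v_0}$ is a single point with trivial $I_p$-action, so $X_{v_0}^{I_p}\neq\emptyset$, yet $X_p=\emptyset$. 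More generally a connected étale $\bar{Y}$-scheme is a dense open subscheme of the spectrum of a ring of $S$-integers, and one is allowed to delete arbitrary finitely many (even unramified) closed points; the deleted points are invisible in $X_{v_0}^{I_v}$ but absent from $X_v$. So your identification only holds when $\bar X\to\bar Y$ is \emph{finite} étale.

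This error propagates. Your fully-faithfulness argument says a $G_K$-equivariant map on generic fibres "automatically" respects the $f_v$'s because it preserves $I_v$-fixed points. But since $X_v$ can be a strict subset of $X_{v_0}^{I_v}$, compatibility with the $f_v$'s is a nontrivial extra condition and is \emph{exactly} what encodes the constraint that the rational map between models be defined at the closed points one has kept. (Without it, the "identity on generic fibres" would wrongly produce a morphism $Y\to Y\setminus\{p\}$.) A correct argument must use all the $\phi_v$'s, not just $\phi_{v_0}$: given a morphism $(\phi_v)\in\mathrm{Hom}_{T_{W;\bar Y}}(\zeta^*\bar X,\zeta^*\bar X')$, the map $\phi_{v_0}$ yields a rational map $X\dashrightarrow X'$ over $Y$, and the existence of the compatible $\phi_v$ at each closed $v$ is precisely what guarantees this rational map is defined at every closed point of $X$ (and hence extends to a morphism, since $X$ is a Dedekind scheme). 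Similarly, for essential surjectivity, the data $(X_v,f_v)$ at the bad places does not "prescribe the decomposition-inertia behaviour" — that is already encoded in $X_{v_0}$ as a $G_K$-set — rather it prescribes \emph{which} unramified primes above $v$ to delete from the integral model. Both corrections are routine once observed, but as written your argument is wrong, not merely unpolished.
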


\begin{e-rem}
Proposition \ref{prop-descript-et-topo-ds-loc-section} suggests to
define the "Weil-étale topology" as the full sub-category of
$T_{W;\bar{Y}}$ consisting of objects $(X_v,f_v)$ such that $f_v$ is
a topological immersion for every valuation $v$ and a homeomorphism
for almost all valuations. Then we endow this full  sub-category of
$T_{W;\bar{Y}}$ with the topology induced by the local section
topology via the inclusion functor.
\end{e-rem}

\subsection{Direct definition of the morphism $\zeta$.}
For any valuation $v$ of $K$, the specialization map
$\bar{Y}_{v}^{\,sh}\rightarrow\bar{Y}$ induces the co-specialization
map
\begin{equation}\label{co-specialization}
f_v:\mathcal{F}_{\bar{v}}\longrightarrow\mathcal{F}_{\bar{v}_0},
\end{equation}
for any étale sheaf $\mathcal{F}$ on $\bar{Y}$. Here
$\mathcal{F}_{\bar{v}}$ and $\mathcal{F}_{\bar{v}_0}$ denote the
stalks of the sheaf $\mathcal{F}$ at the geometric points
$\bar{v}\rightarrow\bar{Y}$ and $\bar{v}_0\rightarrow\bar{Y}$. The
map (\ref{co-specialization}) is $G_{K_v}$-equivariant and
functorial in $\mathcal{F}$. More precisely, denote by
$\mathfrak{q}_v:G_{K_v}\rightarrow G_{k(v)}$ the canonical
projection and by $\mathfrak{o}_v:G_{K_v}\rightarrow G_K$ the
morphism induced by the choice of the valuation $\bar{v}$ of
$\bar{K}$ lying over $v$. One has $u_v^*(\mathcal{F})\in
Ob(B_{G_{k(v)}}^{sm})$ and $u_{v_0}^*(\mathcal{F})\in
Ob(B_{G_{K}}^{sm})$. Then
$$f_v:\mathfrak{q}_v^*(u_v^*\mathcal{F})\longrightarrow\mathfrak{o}_v^*(u_{v_0}^*\mathcal{F})$$
is a map of $B_{G_{K_v}}^{sm}$, where we denote a morphism of
topological groups and the induced morphism of classifying topoi by
the same symbol. Since the squares

\[ \xymatrix{
  W_{K_v}\ar[d]_{\alpha_{K_v}} \ar[r]^{q_v} & W_{k(v)}\ar[d]_{\alpha_v}&
  W_{K_v}\ar[d]_{\alpha_{K_v}} \ar[r]^{{\theta}_v} & W_{K}\ar[d]_{\alpha_{v_0}} \\
  G_{K_v} \ar[r]^{\mathfrak{q}_v}&  G_{k(v)}& G_{K_v} \ar[r]^{\mathfrak{o}_v}&  G_{K}
} \] are both commutative, we get a morphism of $B_{W_{K_v}}$:
$$\alpha_{K_v}^*f_v:q_v^*(\alpha_v^*\circ u_v^*\mathcal{F})=\alpha_{K_v}^*\circ\mathfrak{q}^*_v(u_v^*\mathcal{F})\longrightarrow
\alpha_{K_v}^*\circ\mathfrak{o}^*_v(u_{v_0}^*\mathcal{F})=\theta_v^*(\alpha_{v_0}^*\circ
u_{v_0}^*\mathcal{F}).$$ We obtain an object
$$\zeta^*(\mathcal{F}):=(\alpha_v^*\circ u_v^*\mathcal{F}\,;\,\alpha_{K_v}^*f_v)_{v\in \bar{Y}}$$ of
the category $\mathfrak{F}_{_{W;\bar{Y}}}$. This yields a functor
$$\fonc{\zeta^*}{\bar{Y}_{et}}{\mathfrak{F}_{_{W;\bar{Y}}}}{\mathcal{F}}{(\alpha_v^*\circ
u_v^*\mathcal{F}\,;\,\alpha_{K_v}^*f_v)_{v\in\bar{Y}}}$$ Here the
equivariant map of $G_{K_v}$-sets
$$f_v:\mathfrak{q}_v^*(u_v^*\mathcal{F})\longrightarrow\mathfrak{o}_v^*(u_{v_0}^*\mathcal{F})$$
is induced by the usual co-specialization map between the stalks of
the étale sheaf $\mathcal{F}$.

\begin{prop}\label{prop-direct-def-morph-flsk-etale}
The functor
$\zeta^*:\bar{Y}_{et}\longrightarrow\mathfrak{F}_{_{W;\bar{Y}}}$ is
the inverse image of a morphism of topoi
$$\zeta:\mathfrak{F}_{_{W;\bar{Y}}}\longrightarrow\bar{Y}_{et}.$$
\end{prop}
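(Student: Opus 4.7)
The plan is to verify directly that the functor $\zeta^*$ preserves finite projective limits and arbitrary inductive limits. Once this is established, the standard characterisation of morphisms of topoi via their inverse image (see \cite{SGA4} IV) will show that $\zeta^*$ admits a right adjoint $\zeta_*$ (Grothendieck topoi being locally presentable, the special adjoint functor theorem applies), and that the resulting pair $(\zeta^*,\zeta_*)$ is a geometric morphism.

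For every valuation $v$ of $K$, the component functor $\alpha_v^*\circ u_v^*:\bar{Y}_{et}\rightarrow B_{W_{k(v)}}$ is a composition of inverse images of morphisms of topoi, hence preserves finite projective limits and arbitrary inductive limits. Combined with Proposition~\ref{limitfinies+colim-dans-flask-component-wise}, which states that such (co)limits in $\mathfrak{F}_{_{W;\bar{Y}}}$ are computed componentwise, this will yield, for any small diagram $(\mathcal{F}_i)_i$ in $\bar{Y}_{et}$ admitting a projective or inductive limit, a canonical componentwise identification of $\zeta^*\bigl(\lim\mathcal{F}_i\bigr)$ with $\lim\zeta^*(\mathcal{F}_i)$.

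The only step requiring a little care is the compatibility with the structural maps. Here I would use that the co-specialization $f_v(\mathcal{F}):\mathfrak{q}_v^*u_v^*\mathcal{F}\rightarrow\mathfrak{o}_v^*u_{v_0}^*\mathcal{F}$ depends functorially on $\mathcal{F}$, and thus defines a natural transformation between two functors $\bar{Y}_{et}\rightarrow B^{sm}_{G_{K_v}}$, each of which is itself a composition of inverse image functors of geometric morphisms. Applying $\alpha_{K_v}^*$, naturality will immediately identify the structural map $\alpha_{K_v}^*f_v$ attached to $\zeta^*\bigl(\lim\mathcal{F}_i\bigr)$ with the limit (respectively colimit) of the structural maps attached to the $\zeta^*(\mathcal{F}_i)$. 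This is the only place where a nontrivial check is needed, and it ultimately reduces to the naturality of $f_v$ in the sheaf $\mathcal{F}$.

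Having verified left exactness and cocontinuity of $\zeta^*$, I would conclude by the general theory (\cite{SGA4} IV) that $\zeta^*$ is the inverse image functor of a morphism of topoi $\zeta:\mathfrak{F}_{_{W;\bar{Y}}}\rightarrow\bar{Y}_{et}$, uniquely determined up to unique isomorphism. One can then verify, for instance by testing on representable objects, that this $\zeta$ coincides with the morphism of topoi obtained in the previous subsection from the morphism of left exact sites of Proposition~\ref{prop-morph-sites-etale-loc-sections}.
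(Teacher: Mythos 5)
Your proposal is correct and follows essentially the same route as the paper: check that each component functor $\alpha_v^*\circ u_v^*$ preserves finite projective limits and arbitrary inductive limits, invoke Proposition~\ref{limitfinies+colim-dans-flask-component-wise} to compute (co)limits in $\mathfrak{F}_{_{W;\bar{Y}}}$ componentwise, and conclude via the standard adjoint-functor argument for Grothendieck topoi. The paper states this in three lines; you additionally spell out why the structural maps $\alpha_{K_v}^*f_v$ are compatible with the (co)limit identifications, which the paper leaves implicit.
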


\begin{e-proof}
Since the functors $\alpha_v^*\circ u_v^*$ and $\alpha_{K_v}^*$
commute with finite projective limits and arbitrary inductive
limits, so does the functor $\zeta^*$, by Proposition
\ref{limitfinies+colim-dans-flask-component-wise}. Since the functor
$\zeta^*$ is left exact and has a right adjoint, it is the pull-back
of a morphism of topoi
$\zeta:\mathfrak{F}_{_{W;\bar{Y}}}\longrightarrow\bar{Y}_{et}.$
\end{e-proof}
\subsection{Equivalence of the two definitions.}
Here we denote by
$$z:(Et_{\bar{Y}};\mathcal{J}_{et})\longrightarrow(T_{W;\bar{Y}};;\mathcal{J}_{ls})$$
the morphism of left exact sites defined in Proposition
\ref{prop-morph-sites-etale-loc-sections}. We have a commutative
square
 \[ \xymatrix{
 \mathfrak{F}_{_{W;\bar{Y}}} & \bar{Y}_{et}\ar[l]_{\zeta^*}   \\
 T_{W;\bar{Y}}\ar[u]^{\mathrm{y}}&Et_{\bar{Y}} \ar[u]^{\varepsilon}\ar[l]_{z}
} \] where $\zeta^*$ is defined in Proposition
\ref{prop-direct-def-morph-flsk-etale}, $\mathrm{y}$ is defined in
Proposition \ref{prop-functor-loc-sect-site-flask-topos} and
$\varepsilon:Et_{\bar{Y}}\rightarrow\bar{Y}_{et}$ is the Yoneda
embedding. By (\cite{SGA4} IV Proposition 4.9.4), the morphism of
topoi induced by the morphism of left exact sites $z$ is isomorphic
to the morphism of topoi
$\zeta:\mathfrak{F}_{_{W;\bar{Y}}}\rightarrow\bar{Y}_{et}$ of
Proposition \ref{prop-direct-def-morph-flsk-etale}.

\begin{prop}The morphism
$\zeta:\mathfrak{F}_{_{W;\bar{Y}}}\rightarrow\bar{Y}_{et}$ is not
connected (i.e. the inverse image functor $\zeta^*$ is not fully
faithful).
\end{prop}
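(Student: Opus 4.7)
The plan is to show that the adjunction unit $\mathrm{id}_{\bar{Y}_{et}} \to \zeta_* \zeta^*$ fails to be an isomorphism; equivalently, that for some pair of objects $A, B$ of $\bar{Y}_{et}$ the natural comparison map
\[
\mathrm{Hom}_{\bar{Y}_{et}}(A, B) \longrightarrow \mathrm{Hom}_{\mathfrak{F}_{_{W;\bar{Y}}}}(\zeta^* A, \zeta^* B)
\]
is not bijective. Since the site morphism $z$ is fully faithful (the preceding Proposition), this comparison map is always injective, so the obstruction must be to surjectivity, i.e.\ to the existence of morphisms between pulled-back sheaves in $\mathfrak{F}_{_{W;\bar{Y}}}$ that do not descend to $\bar{Y}_{et}$.

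I would proceed cohomologically as a first step. Take $A = B = \mathbb{Z}$ the constant abelian sheaf, so $\zeta^* \mathbb{Z} = \mathbb{Z}$ is again the constant sheaf on $\mathfrak{F}_{_{W;\bar{Y}}}$. If $\zeta^*$ were fully faithful, then in particular $\zeta_* \zeta^* \mathbb{Z} = \mathbb{Z}$ and, assuming the higher direct images $R^q \zeta_* \zeta^* \mathbb{Z}$ vanish for $q \geq 1$, the Leray spectral sequence
\[
H^p(\bar{Y}_{et}, R^q \zeta_* \mathbb{Z}) \Longrightarrow H^{p+q}(\mathfrak{F}_{_{W;\bar{Y}}}, \mathbb{Z})
\]
would collapse to an isomorphism $H^n(\bar{Y}_{et}, \mathbb{Z}) \cong H^n(\mathfrak{F}_{_{W;\bar{Y}}}, \mathbb{Z})$ for all $n$. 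This contradicts the computations already established in the paper: Proposition~\ref{art-Verd-cohomology-of-Z} gives $H^2(\bar{Y}_{et}, \mathbb{Z}) = Pic(Y)^{D}$, a \emph{finite} group, whereas the combination of Theorem~\ref{thm-lichten-weil-etale-Zcoef} and Theorem~\ref{comparaison-limitcohomology-flask} shows that $H^2(\mathfrak{F}_{_{W;\bar{Y}}}, \mathbb{Z})$ sits in an exact sequence involving the Pontryagin dual $(C_K^1)^{\mathcal{D}}$ of the compact norm-one id\`ele class group, and is strictly larger (in particular, capturing an analytic summand coming from the archimedean id\`eles).

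The main obstacle is making the vanishing $R^q \zeta_* \zeta^* \mathbb{Z} = 0$ for $q \geq 1$ rigorous: full faithfulness of $\zeta^*$ alone does not force this conclusion, because $\zeta^*$ is not known a priori to preserve injective abelian objects (it is exact, being the inverse image of a morphism of topoi, but this does not suffice). The cleanest way around this is to bypass the spectral sequence argument by constructing an explicit non-descending morphism in $\mathfrak{F}_{_{W;\bar{Y}}}$: the rigid compatibility in the definition of the flask topos is imposed in the large classifying topoi $B_{W_{K_v}}$, which at archimedean $v$ (where $W_{k(v)} = \mathbb{R}$ and $G_{k(v)} = 1$) and at the generic point (where $\alpha_{v_0}: W_K \to G_K$ has large connected kernel) admit considerably more morphisms than the corresponding Galois topoi. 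Producing a concrete such morphism between two pulled-back \'etale sheaves $\zeta^*A$ and $\zeta^*B$ is the technical crux, but once found it yields non-surjectivity of the Hom-comparison directly and hence the proposition.
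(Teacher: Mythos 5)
You correctly identify the two equivalent formulations (unit $\mathrm{id}\to\zeta_*\zeta^*$ not an isomorphism, or Hom-comparison not bijective), and you correctly diagnose that the naive cohomological route fails. But the proposal as written stops exactly at the point where the actual argument would have to begin.

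Two specific problems. First, the claim that the Hom-comparison is \emph{always injective} because the site functor $z:Et_{\bar Y}\to T_{W;\bar Y}$ is fully faithful does not follow: full faithfulness of $z$ on representables (Proposition~\ref{prop-descript-et-topo-ds-loc-section}) controls morphisms between objects of the Yoneda image of $Et_{\bar{Y}}$ inside $T_{W;\bar Y}$, not morphisms between arbitrary sheaves after applying $\zeta^*$. Injectivity of the unit $\eta_B:B\to\zeta_*\zeta^*B$ for all abelian $B$ is an additional statement that would need its own proof (typically via surjectivity of $\zeta$ on points, which you would then have to establish). Second, and more seriously, you acknowledge yourself that the cohomological argument requires $R^q\zeta_*\zeta^*\mathbb{Z}=0$ for $q\geq1$, which does not follow from full faithfulness of $\zeta^*$ — indeed, the whole content of the later spectral-sequence section of the paper is that these higher direct images are interesting and nonzero. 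You then propose to bypass this by producing an explicit morphism $\zeta^*A\to\zeta^*B$ that does not descend, and stop: ``Producing a concrete such morphism $\dots$ is the technical crux.'' That is precisely the proof you were asked to supply, and it is absent. As a result the proposal does not establish the proposition; it only records a (correctly identified) reduction of the problem and a correct refutation of one failed strategy. Note that the paper itself defers the proof to \cite{these}, citing the explicit description of $\zeta^*$ in terms of stalks and cospecialization maps — which is indeed the raw material for the ``explicit non-descending morphism'' you gesture at — so your instinct about where the proof lives is right, but the work of constructing the morphism (e.g.\ exploiting the discrepancy between $W_{K_v}$- and $G_{K_v}$-equivariance at an archimedean place, where $W_{k(v)}=\mathbb{R}$ has a continuum of characters while $G_{k(v)}$ is trivial) has not been done.
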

The second definition of the morphism
$\zeta:\mathfrak{F}_{_{W;\bar{Y}}}\rightarrow\bar{Y}_{et}$ yields a
description of its inverse image functor $\zeta^*$. This can be used
to prove the proposition above (see \cite{these} Corollary 4.67).

\subsection{The morphisms $\zeta_{L,S}$.}\label{subsect-zeta-LS}

Let $L/K$ be a finite Galois sub-extension of $\overline{K}/K$. We
denote by $Et_{L/K}$ the full sub-category of $Et_{\bar{Y}}$
consisting of étale $\bar{Y}$-schemes $\bar{X}$ such that the action
of $G_K$ on the finite set
$$X_{v_0}=Hom_{\bar{Y}}(Spec(\overline{K});\bar{X})$$
factors through $G_{L/K}=Gal(L/K)$. This category is endowed with
the topology (again denoted by $\mathcal{J}_{et}$) induced by the
étale topology on $Et_{\bar{Y}}$ via the inclusion functor
$Et_{L/K}\rightarrow Et_{\bar{Y}}$. This functor yields a morphism
of left exact sites
$$(Et_{L/K},\mathcal{J}_{et})\longrightarrow (Et_{\bar{Y}},\mathcal{J}_{et})$$
and a morphism of topoi. These morphisms are compatible hence they
induce a morphism from $\bar{Y}_{et}$ to the projective limit topos
$\underleftarrow{lim}\,\widetilde{(Et_{L/K},\mathcal{J}_{et})}$,
where the limit is taken over all the finite Galois sub-extensions
of $\overline{K}/K$.
\begin{prop}
The canonical morphism
$$\bar{Y}_{et}\longrightarrow\underleftarrow{lim}\,\widetilde{(Et_{L/K},\mathcal{J}_{et})}.$$
is an equivalence.
\end{prop}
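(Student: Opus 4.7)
The strategy is to recognise the proposition as an instance of the standard description of a filtered projective limit of topoi as the topos of sheaves on a filtered colimit of sites (\cite{SGA4} VI.8.2). Concretely, I would exhibit $(Et_{\bar{Y}},\mathcal{J}_{et})$ as the filtered $2$-colimit, in the $2$-category of left exact sites, of the system $(Et_{L/K},\mathcal{J}_{et})_{L/K}$ indexed by the filtered set of finite Galois sub-extensions $L/K$ of $\bar{K}/K$.

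The first step is the categorical identification $Et_{\bar{Y}} = \varinjlim_{L} Et_{L/K}$. Since every étale $\bar{Y}$-scheme is a finite disjoint union of connected components and the inclusions $Et_{L/K} \hookrightarrow Et_{\bar{Y}}$ are fully faithful, it suffices to show that every connected $\bar{X}\in Et_{\bar{Y}}$ lies in $Et_{L/K}$ for some finite Galois $L/K$. For such $\bar{X}$ the generic fibre $X_{v_0}=\mathrm{Hom}_{\bar{Y}}(\mathrm{Spec}(\bar{K}),\bar{X})$ is a finite continuous $G_K$-set, hence its action factors through some $G_{L/K}$; taking a compositum over the finitely many connected components of a general $\bar{X}$ gives a single $L$ with $\bar{X}\in Et_{L/K}$. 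The same reasoning applied to morphisms shows that the inclusion functors are cofinal, so the filtered $2$-colimit is indeed $Et_{\bar{Y}}$.

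The second step is the compatibility of topologies. One verifies that the étale topology on $Et_{\bar{Y}}$ is exactly the finest topology making each inclusion $Et_{L/K}\hookrightarrow Et_{\bar{Y}}$ continuous, with $Et_{L/K}$ carrying the induced topology $\mathcal{J}_{et}$. Since coverings in $\mathcal{J}_{et}$ are generated by surjective families and any such family over an object of $Et_{L/K}$ can be enlarged, by taking a sufficiently big $L'\supseteq L$, to a surjective family inside $Et_{L'/K}$, the two topologies agree. Moreover $Et_{L/K}$ is stable in $Et_{\bar{Y}}$ under finite projective limits, so each inclusion is a morphism of left exact sites and the transition functors $Et_{L/K}\hookrightarrow Et_{L'/K}$ for $L\subseteq L'$ are themselves morphisms of left exact sites.

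The third step is to invoke \cite{SGA4} VI.8.2.3: for a filtered $2$-colimit of left exact sites whose transition functors are morphisms of left exact sites, the topos of sheaves on the colimit site represents the $2$-projective limit, in the $2$-category of topoi, of the topoi of sheaves on the individual sites. Applied to the system $(Et_{L/K},\mathcal{J}_{et})$ this yields precisely the asserted equivalence
$$\bar{Y}_{et}\;\simeq\;\underleftarrow{lim}\,\widetilde{(Et_{L/K},\mathcal{J}_{et})}.$$
The main technical obstacle is the bookkeeping required to check the hypotheses of the SGA4 result — in particular, that the transition morphisms in the projective system of topoi coincide with those induced by the inclusions $Et_{L/K}\hookrightarrow Et_{L'/K}$, and that the topology on $Et_{\bar{Y}}$ is genuinely the colimit topology (rather than a strictly finer one). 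Once these compatibilities are in place the equivalence is formal.
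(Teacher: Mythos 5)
Your proof follows the same route as the paper: identify $Et_{\bar{Y}}$ with the filtered $2$\nobreakdash-colimit $\varinjlim_L Et_{L/K}$, check that the \'etale topology on $Et_{\bar{Y}}$ coincides with the colimit topology, and invoke \cite{SGA4} VI~8.2.3 to conclude that sheaves on the colimit site form the $2$-projective limit of topoi. One terminological slip: you claim the \'etale topology on $Et_{\bar{Y}}$ is the \emph{finest} topology making the inclusions $Et_{L/K}\hookrightarrow Et_{\bar{Y}}$ continuous, but it should be the \emph{coarsest} such topology (a functor that is continuous for a topology $\mathcal{J}'$ remains continuous for any finer topology, so there is no interesting ``finest'' one, whereas the colimit topology is characterized as the coarsest). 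Your own argument --- that any surjective family over an object of $Et_{L/K}$ already lies in some $Et_{L'/K}$, so \'etale covers on $Et_{\bar{Y}}$ are all generated by covers from the $Et_{L/K}$ --- establishes exactly this coarsest-topology characterization, and your closing remark that the worry is the \'etale topology being ``strictly finer'' than the colimit topology presupposes it as well; so the word ``finest'' is an isolated slip rather than a flaw in the argument.
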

\begin{e-proof}
The morphism of let exact sites
$$(Et_{L/K},\mathcal{J}_{et})\longrightarrow
(Et_{L'/K},\mathcal{J}_{et})$$ is given by the inclusion functor,
for $\bar{K}/L'/L/K$. By (\cite{SGA4} VI 8.2.3), the direct limit
site
$$\underrightarrow{lim}(Et_{L/K},\mathcal{J}_{et}):=(\underrightarrow{lim}(Et_{L/K}),\mathcal{J})$$
is a site for the inverse limit topos
$\underleftarrow{lim}\,\widetilde{(Et_{L/K},\mathcal{J}_{et})}$. The
direct limit category $\underrightarrow{lim}(Et_{L/K})$ (see
\cite{Artin} III.3) is canonically equivalent to $Et_{\bar{Y}}$. The
topology $\mathcal{J}$ is the coarsest topology which makes all the
functors
$$(Et_{L/K},\mathcal{J}_{et})\longrightarrow(Et_{\bar{Y}},\mathcal{J})$$
continuous. In other words, $\mathcal{J}$ is the coarsest topology
on $Et_{\bar{Y}}$ such that any covering family of
$(Et_{L/K},\mathcal{J}_{et})$ is a covering family of
$Et_{\bar{Y}}$, for all $L/K$. Hence $\mathcal{J}$ is just the étale
topology, and $(Et_{\bar{Y}},\mathcal{J}_{et})$ is a site for the
inverse limit topos.
\end{e-proof}
\begin{prop}
There is a morphism of topoi $\zeta_{L,S}:
\mathfrak{F}_{_{L/K,S}}\rightarrow\widetilde{(Et_{L/K},\mathcal{J}_{et})}$.
Moreover, the diagram
 \[ \xymatrix{
 \mathfrak{F}_{_{L'/K,S'}}\ar[r]^{\zeta_{L',S'}}\ar[d] & \widetilde{(Et_{L'/K},\mathcal{J}_{et})}\ar[d]\\
 \mathfrak{F}_{_{L/K,S}}\ar[r]^{\zeta_{L,S}}&\widetilde{(Et_{L/K},\mathcal{J}_{et})}
} \] is commutative, for $L'/L/K$ and $S\subset S'$.
\end{prop}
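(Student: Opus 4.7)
The plan is to mimic at the finite level $(L/K,S)$ the site-level construction of $\zeta$ furnished by Proposition \ref{prop-morph-sites-etale-loc-sections}. Concretely I would define a functor
$$z_{L,S}:Et_{L/K}\longrightarrow T_{L/K,S},\quad \bar{X}\longmapsto(X_v;f_v)_{v\in\bar{Y}},$$
where $X_v:=Hom_{\bar{Y}}(\bar{Y}_v^{\,sh},\bar{X})$ is endowed with its natural $W_{k(v)}$-topological-space structure pulled back from $G_{k(v)}$ via $\alpha_v$ (for $v\neq v_0$), while $X_{v_0}$ carries the $W_{L/K,S}$-action coming from the composition $W_{L/K,S}\rightarrow G_{L/K}\rightarrow Aut(X_{v_0})$ --- the factorisation through $G_{L/K}$ being legitimate precisely because $\bar{X}\in Et_{L/K}$. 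The specialisation maps $f_v$ of (\ref{specialisationulrametric})--(\ref{specializationarchimedean}) become $\widetilde{W}_{K_v}$-equivariant as a consequence of the commutativity of the Weil--Galois diagrams fixed in Section 2. Exactly as in Proposition \ref{prop-morph-sites-etale-loc-sections}, the functor $z_{L,S}$ preserves final objects and fibre products, and sends surjective families of \'etale $\bar{Y}$-schemes to local-section covers, so it is a morphism of left exact sites. This induces a morphism of topoi $\widetilde{(T_{L/K,S},\mathcal{J}_{ls})}\rightarrow\widetilde{(Et_{L/K},\mathcal{J}_{et})}$, which I would compose with the equivalence $\mathfrak{F}_{_{L/K,S}}\simeq\widetilde{(T_{L/K,S},\mathcal{J}_{ls})}$ of Theorem \ref{thm-principal-flasktopos} to obtain $\zeta_{L,S}$.

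To establish the commutativity of the transition square, I would first check the analogous square at the site level,
\[ \xymatrix{
Et_{L/K}\ar[d]\ar[r]^{z_{L,S}}&T_{L/K,S}\\
Et_{L'/K}\ar[r]^{z_{L',S'}}&T_{L'/K,S'}\ar[u]
}\]
whose left-hand vertical arrow is the inclusion $Et_{L/K}\hookrightarrow Et_{L'/K}$ and whose right-hand vertical arrow is the functor of sites underlying the transition morphism $t:\mathfrak{F}_{_{L'/K,S'}}\rightarrow\mathfrak{F}_{_{L/K,S}}$ of Section 3, built from $p:W_{L'/K,S'}\rightarrow W_{L/K,S}$ and the maps $p_v:W_{K_v,L',S'}\rightarrow W_{K_v,L,S}$. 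On objects, both composites send an \'etale $\bar{Y}$-scheme $\bar{X}$ to the same underlying tuple of topological spaces $(X_v;f_v)$; the two routes a priori record actions by different groups, but these actions agree after restriction along $p$ and $p_v$, by the very construction of those maps. Passage to topoi via Theorem \ref{thm-principal-flasktopos} then produces the diagram of the proposition.

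The main obstacle, if there is one, is bookkeeping rather than mathematics: one has to keep straight the direction of each functor between sites versus the direction of the induced morphism of topoi, and verify that the $\widetilde{W}_{K_v}$-equivariance of $f_v$ at the finite level really does follow from the commutative squares of Weil and Galois groups recalled at the beginning of the paper (the point being that $\theta_v$ factors through $\widetilde{W}_{K_v}$ and $\mathfrak{o}_v$ through $D_v$). No new ingredient beyond Proposition \ref{prop-morph-sites-etale-loc-sections} and Theorem \ref{thm-principal-flasktopos} is required.
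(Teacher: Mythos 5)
Your approach is essentially the paper's own: restrict $\zeta^*:Et_{\bar{Y}}\to T_{W,\bar{Y}}$ to $Et_{L/K}$, verify it lands in $T_{L/K,S}$ (because a $G_K$-action that factors through $G_{L/K}$ also factors, as a $W_K$-action, through $W_{L/K,S}$), note that it is a morphism of left exact sites, and invoke Theorem \ref{thm-principal-flasktopos}; commutativity of the square is then read off the corresponding square of sites. This is precisely what the paper does in two sentences, and your description of $z_{L,S}$ and its equivariance is correct.

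One correction in the bookkeeping you yourself flag as delicate: the right-hand vertical arrow in your site-level square should point \emph{downwards}, i.e.\ it is a functor $T_{L/K,S}\to T_{L'/K,S'}$, not a functor $T_{L'/K,S'}\to T_{L/K,S}$. A morphism of left exact sites $a:\mathcal{C}\to\mathcal{C}'$ induces a morphism of topoi $\widetilde{\mathcal{C}'}\to\widetilde{\mathcal{C}}$ in the \emph{opposite} direction; so the site functor ``underlying'' the transition morphism $t:\mathfrak{F}_{_{L'/K,S'}}\to\mathfrak{F}_{_{L/K,S}}$ is the inclusion $T_{L/K,S}\hookrightarrow T_{L'/K,S'}$, obtained by pulling back the $W_{L/K,S}$-action on $X_{v_0}$ along $p:W_{L'/K,S'}\to W_{L/K,S}$ (and likewise along $p_v$ at the closed points). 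There is no natural site functor $T_{L'/K,S'}\to T_{L/K,S}$. With this orientation fixed, the square
\[ \xymatrix{
Et_{L/K}\ar[d]\ar[r]^{z_{L,S}}&T_{L/K,S}\ar[d]\\
Et_{L'/K}\ar[r]^{z_{L',S'}}&T_{L'/K,S'}
}\]
commutes on the nose (both composites send $\bar{X}$ to the same tuple $(X_v;f_v)$, with the $W_{L'/K,S'}$-structure at $v_0$ coming in both cases from restriction of the $G_K$-action), and passage to topoi yields the commuting square of the proposition.
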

\begin{e-proof}
The functor $$\zeta_{L,S}^*:Et_{L/K}\longrightarrow T_{L/K,S}$$
induced by $\zeta^*:Et_{\bar{Y}}\longrightarrow T_{W,\bar{Y}}$
yields a morphism of left exact sites, hence the first claim of the
proposition follows from Theorem \ref{thm-principal-flasktopos}. The
diagram of the proposition is commutative since the corresponding
diagram of sites is commutative.
\end{e-proof}

\begin{prop}\label{localistion-relative-flask-topos}
Let $\bar{V}$ be a connected étale $\bar{Y}$-scheme lying in the
category $Et_{L/K}$ (i.e. $G_{L/K}$ acts transitively on $V_{v_0}$).
One has an equivalence
$$\mathfrak{F}_{_{L/K,S}}/_{\bar{V}}\simeq\mathfrak{F}_{_{L/K(V),\widetilde{S}}}/_{\bar{V}},$$
where $K(V)$ is the function field of $\bar{V}$ and $\widetilde{S}$
is the set of places of $K(V)$ lying over $S$.
\end{prop}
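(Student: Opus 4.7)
The plan is to compare both slice topoi componentwise, using the explicit description of $\mathfrak{F}_{_{L/K,S}}$ and the classical equivalence $B_G/(G/H)\simeq B_H$ applied at each valuation.

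First, I would describe the object $\mathcal{V}:=\zeta^{*}_{L,S}(\bar V)$ of $\mathfrak{F}_{_{L/K,S}}$ explicitly. By construction of $\zeta^*$, its $v$-component $V_v$ is the set of $\bar{v}$-points of $\bar V$, viewed as a $W_{k(v)}$-set via the action of the decomposition at $v$. Since $\bar V$ lies in $Et_{L/K}$ and $G_{L/K}$ acts transitively on $V_{v_0}$, one has $V_{v_0}\simeq W_{L/K,S}/W_{L/K(V),\widetilde S}$, and for each valuation $v$ the set $V_v$ decomposes as a disjoint union of $W_{k(v)}$-orbits indexed by the places $w$ of $K(V)$ lying over $v$ and occurring in $\bar V$, each orbit being $W_{k(v)}/W_{k(w)}$. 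The specialization $\xi_v:q_v^*V_v\to\theta_v^*V_{v_0}$ sends the $w$-orbit to the $\widetilde W_{K_v}$-orbit stabilized by $\widetilde W_{K(V)_w}$.

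Second, I would invoke the standard slice equivalence for classifying topoi: for any topological group $G$ and any $G$-set $X\simeq\coprod_i G/H_i$, there is a canonical equivalence $B_G/X\simeq\prod_i B_{H_i}$, under which an object $(F\to G/H_i)$ corresponds to its fibre over the trivial coset. Applied at $v_0$ and at each closed point, this yields
$$B_{W_{L/K,S}}/V_{v_0}\simeq B_{W_{L/K(V),\widetilde S}},\qquad B_{W_{k(v)}}/V_v\simeq\prod_{w\mid v,\,w\in\bar V}B_{W_{k(w)}},$$
and analogous equivalences after pull-back to $B_{\widetilde W_{K_v}}$.

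Third, using Proposition \ref{limitfinies+colim-dans-flask-component-wise}, which asserts that limits in $\mathfrak{F}_{_{L/K,S}}$ are computed componentwise, I would assemble these equivalences into a functor $\mathfrak{F}_{_{L/K,S}}/_{\bar V}\to\mathfrak{F}_{_{L/K(V),\widetilde S}}/_{\bar V}$: an object $((F_v,\phi_v),f_v)_v$ is sent to the family $(F'_w)_{w\in\bar V}$ obtained by taking fibres over the chosen cosets, together with the induced compatibility maps $f'_w:(q'_w)^*F'_w\to(\theta'_w)^*F'_{w_0}$ in $B_{\widetilde W_{K(V)_w}}$, where $q'_w$ and $\theta'_w$ are the Weil maps attached to the place $w$ of $K(V)$. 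An inverse functor is obtained by inducing up along the stabilizer inclusions. The main obstacle is the compatibility bookkeeping: one must verify that under the slice equivalence $B_{\widetilde W_{K_v}}/q_v^*V_v\simeq\prod_{w\mid v}B_{\widetilde W_{K(V)_w}}$ the fibre decomposition of $\xi_v$ matches the data built from $\theta'_w$ and $q'_w$, i.e.\ that the stabilizer of a chosen lift of $w$ inside $\widetilde W_{K_v}$ is precisely $\widetilde W_{K(V)_w}$ and that the resulting Weil-map pairs coincide. Granted this verification, the functor is fully faithful and essentially surjective, yielding the asserted equivalence of topoi.
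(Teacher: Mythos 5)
Your proposal is correct and follows essentially the same route as the paper: the paper also reduces the statement to the equivalence $B_{W_{L/K,S}}/V_{v_0}\simeq B_{W_{L/K(V),\widetilde S}}$ and to the analogous statements at each closed point, and then assembles these componentwise. You spell out more explicitly the orbit decomposition $B_{W_{k(v)}}/V_v\simeq\prod_{w\mid v}B_{W_{k(w)}}$ at closed points and you flag, rather than carry out, the compatibility check that the stabilizers match the local Weil data $(\theta'_w,q'_w)$; the paper itself compresses this step into "the proposition follows."
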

\begin{e-proof}
The choice of a point of $V_{v_0}$ defines an isomorphism of
$W_{L/K,S}$-sets
$$V_{v_0}\simeq G_L/G_{K(V)}=W_{L/K,S}/W_{L/K(V),\widetilde{S}}$$
We get an isomorphism
$$B_{W_{L/K,S}}/y(W_{L/K,S},V_{v_0})\simeq B_{W_{L/K(V),\widetilde{S}}}.$$
The same result is valid for any closed point of $\bar{V}$ and the
proposition follows.
\end{e-proof}

\section{The spectral sequence relating Weil-étale cohomology to étale cohomology}

\subsection{Strongly compact topoi}

\begin{e-defn}
A topos $T$ is said to be \emph{strongly compact} if the functors
$H^n(T,-)$ commute with filtered colimits of abelian sheaves.
\end{e-defn}

Let $(T_i,f_{ji})_{i\in I}$ be a filtered projective system of
topoi, where the maps $f_{ji}:T_j\rightarrow T_i$ are the transition
maps. We denote by $T_{\infty}:=\underleftarrow{lim}\,T_i$ the limit
topos computed in the 2-category of topoi. We have canonical
morphisms $f_i:T_{\infty}\rightarrow T_i$. Suppose given an abelian
object $A_i$ of $T_i$ for any $i\in I$, and a family of morphism
$\alpha_{ij}:f_{ji}^*A_i\rightarrow A_j$ such that the following
condition holds :
$$\alpha_{ik}=\alpha_{jk}\circ f_{kj}^*(\alpha_{ij}):f_{ki}^*A_i=f_{kj}^*f_{ji}^*A_i\longrightarrow f_{kj}^*A_j\longrightarrow
A_k.$$ In what follows, the data $(A_i,\alpha_{ij})$ is said to be a
\emph{compatible system of abelian sheaves} on the projective system
of topoi $(T_i,f_{ji})_{i\in I}$.

The morphisms $f^*_j(\alpha_{ij})$ yield a filtered inductive system
of abelian objects $(f_i^*A_i)_{i\in I}$ in $T_{\infty}$, and we set
$$A_{\infty}:=\underrightarrow{lim}f_i^*A_i.$$

\begin{e-lem}
If the topos $T_i$ are all strongly compact, then the canonical
morphism
$$\underrightarrow{lim}\,H^n(T_i,A_i)\longrightarrow H^n(T_{\infty},A_{\infty})$$
is an isomorphism, for any integer $n$.
\end{e-lem}

\begin{proof}
By (\cite{SGA4} VI Corollaire 8.7.7) the topos $T_{\infty}$ is
strongly compact as well, and one has
$$H^n(T_{\infty},A_{\infty})=\underrightarrow{lim}\,H^n(T_{\infty},f_i^*A_i)
=\underrightarrow{lim}_{_{i\in
I}}\,(\underrightarrow{lim}_{_{j\rightarrow
i}}H^n(T_{j},f_{ji}^*A_i)).$$ We check easily that the canonical map
$$\underrightarrow{lim}_{_{i\in
I}}\,(\underrightarrow{lim}_{_{j\rightarrow
i}}H^n(T_{j},f_{ji}^*A_i))\longrightarrow\underrightarrow{lim}_{_{i\in
I}}\,H^n(T_i,A_i)$$ is an isomorphism. The result then follows from
the fact that the natural map
$$\underrightarrow{lim}_{_{i\in
I}}\,(\underrightarrow{lim}_{_{j\rightarrow
i}}H^n(T_{j},f_{ji}^*A_i))\longrightarrow
H^n(T_{\infty},A_{\infty})$$ factors through
$\underrightarrow{lim}_{_{i\in I}}\,H^n(T_i,A_i)$.
\end{proof}

Consider now the more general case where the sheaves $A_i$ are
replaced by bounded below complexes of abelian sheaves $C^*_i$.
Denote by $H^q(T_i,C^*_i)$ the hypercohomology of the complex of
sheaves $C_i^*$. We suppose given a compatible family of morphisms
of (bounded below) complexes $\alpha_{ij}:f_{ji}^*C^*_i\rightarrow
C^*_j$ for each transition map $f_{ji}:T_j\rightarrow T_i$, and we
define
$$C^*_{\infty}:=\underrightarrow{lim}f_i^*C^*_i.$$

\begin{e-lem}\label{lem-cohomology-lim-complex}
If the topos $T_i$ are all strongly compact, then the canonical
morphism
$$\underrightarrow{lim}\,H^n(T_i,C^*_i)\longrightarrow H^n(T_{\infty},C^*_{\infty})$$
is an isomorphism for any integer $n$.
\end{e-lem}

\begin{e-proof}
We denote by $H^q(C^*_i)$ (respectively $H^q(C^*_\infty)$) the
cohomology sheaf of the complex $C_i^*$ (respectively $C^*_\infty$)
in degree $q$. The inverse image functor $f_i^*$ is exact hence we
have $H^q(f_i^*C^*_i)=f_i^*H^q(C_i^*)$. By exactness of filtered
inductive limits, we obtain
$$H^q(C^*_\infty)=\underrightarrow{lim}H^q(f_i^*C_i)=\underrightarrow{lim}f_i^*H^q(C_i),$$
for any $q\geq0$. For any $i\in I$, we have a convergent spectral
sequence
$$H^p(T_i,H^q(C^*_i))\Longrightarrow H^{p+q}(T_i,C^*_i).$$
The compatible morphisms of complexes
$\alpha_{ij}:f_{ji}^*C^*_i\rightarrow C^*_j$ induce compatible
morphisms of spectral sequences, hence we have an inductive system
of spectral sequences. We obtain a morphism of spectral sequences
from
$$\underrightarrow{lim}H^p(T_i,H^q(C^*_i))\Longrightarrow \underrightarrow{lim}H^{p+q}(T_i,C^*_i)$$
to
$$H^p(T_\infty,H^q(C^*_\infty))\Longrightarrow
H^{p+q}(T_\infty,C^*_\infty).$$ By the previous lemma, this morphism
is an isomorphism at the $E_2$-term. It therefore induces
isomorphisms on the abutments. The result follows.
\end{e-proof}

Let $\bar{Y}$ be the set of valuations of the number field $K$, and
let $(C^*_{L},\alpha_u)$ be a compatible system of bounded below
complexes of abelian sheaves on the sites
$\widetilde{(Et_{L/K},\mathcal{J}_{et})}_{L}$ (i.e. a bounded below
complex of abelian objects in the total topos
$Top\,\widetilde{(Et_{L/K},\mathcal{J}_{et})}_{L}$). We denote by
$C^*_{\infty}$ the complex of sheaves on
$\bar{Y}_{et}\simeq\underleftarrow{lim}\widetilde{(Et_{L/K},\mathcal{J}_{et})}$
defined as above.

\begin{e-cor}\label{lem-cohomology-lim-complex-etale}
We have an isomorphism
$$\underrightarrow{lim}\,H^n(Et_{L/K},C^*_L)\simeq H^n(\bar{Y}_{et},C^*_{\infty})$$
where $L$ runs over the finite Galois sub-extensions of $\bar{K}/K$.
\end{e-cor}
\begin{proof}
The topoi $\widetilde{(Et_{L/K},\mathcal{J}_{et})}$ are all coherent
hence strongly compact (see \cite{SGA4} VI Cor. 5.2). Thus the
result follows from the previous lemma.
\end{proof}

\subsection{The spectral sequence}

\begin{e-thm}
Let $\mathcal{A}=(\mathcal{A}_{L,S},f_t)$ be an abelian object of
$Top(\mathfrak{F}_\bullet)$. There exists a bounded below complex
$R\mathcal{A}$ of abelian sheaves on $\bar{Y}_{et}$ and an
isomorphism
$$H^*(\bar{Y}_{et},R\mathcal{A})\simeq\underrightarrow{H}^*(\mathfrak{F}_{_{L/K,S}},\mathcal{A}),$$
where the left hand side is the étale hypercohomology of the complex
$R\mathcal{A}$. In particular, one has a spectral sequence relating
Lichtenbaum's Weil-étale cohomology to étale cohomology
$$H^p(\bar{Y}_{et},R^q\mathcal{A})\Longrightarrow\underrightarrow{H}^{p+q}(\mathfrak{F}_{_{L/K,S}},\mathcal{A}),$$
where $R^q\mathcal{A}$ is the cohomology sheaf of the complex
$R\mathcal{A}$ in degree $q$. The complex $R\mathcal{A}$ is well
defined up to quasi-isomorphism and functorial in $\mathcal{A}$.
\end{e-thm}

\begin{e-proof}
Since $Top\,(\mathfrak{F}_\bullet)$ is a topos, the abelian category
$Ab(Top\,(\mathfrak{F}_\bullet))$ has enough injectives. We choose
an injective resolution
$$0\rightarrow\mathcal{A}\rightarrow I^0_\bullet\rightarrow I^1_\bullet\rightarrow...$$
of the abelian objet $\mathcal{A}=(\mathcal{A}_{L,S},f_t)$. By
Proposition \ref{relative-flask-total-flask}, this resolution
provides us with an injective resolution
$$0\rightarrow\mathcal{A}_{L,S}\rightarrow I^0_{L,S}\rightarrow I^1_{L,S}\rightarrow...$$
of the abelian sheaf $\mathcal{A}_{L,S}$ on $\mathfrak{F}_{L,S}$,
for any pair $(L,S)$, and with a morphism of complexes
$$t^*I^*_{L,S}\longrightarrow I^*_{L',S'},$$
for any transition map $t$. These morphisms of complexes are
compatible in the usual way.

For any map $(L',S')\rightarrow(L,S)$ in $I/_K$, the following
diagram commutes
 \[ \xymatrix{
 \mathfrak{F}_{_{L'/K,S'}}\ar[r]^{\zeta_{L',S'}}\ar[d]^t & \widetilde{(Et_{L'/K},\mathcal{J}_{et})}\ar[d]^u\\
 \mathfrak{F}_{_{L/K,S}}\ar[r]^{\zeta_{L,S}}&\widetilde{(Et_{L/K},\mathcal{J}_{et})}
} \] In particular, we have
$u_*\circ\zeta_{L',S',*}\simeq\zeta_{L,S,*}\circ t_*$. By
adjunction, we obtain a natural (Beck-Chevalley) transformation
$$u^*\zeta_{L,S,*}\longrightarrow u^*\zeta_{L,S,*}t_*t^*\simeq u^*u_*\zeta_{L',S',*}t^*\longrightarrow\zeta_{L',S',*}t^*.$$
This transformation induces a morphism of complexes
\begin{equation}\label{une-transition}
u^*\zeta_{L,S,*}I^*_{L,S}\longrightarrow\zeta_{L',S',*}t^*I^*_{L,S}\longrightarrow\zeta_{L',S',*}I^*_{L',S'},
\end{equation}
where the last arrow is given by the morphism
$t^*I^*_{L,S}\longrightarrow I^*_{L',S'}$.

For any fixed Galois extension $L/K$, we have in particular a
filtered inductive system of complexes of sheaves
$(\zeta_{L,S,*}I^*_{L,S})_{S}$ in the topos
$\widetilde{(Et_{L/K},\mathcal{J}_{et})}$ . We set
$$I^*_L:=\underrightarrow{lim}_{_{S}}\,\zeta_{L,S,*}I^*_{L,S}.$$
For any transition map $u$, (\ref{une-transition}) induces a
morphism of complexes $u^*I^*_L\rightarrow I^*_{L'}$, since $u^*$
commutes with inductive limits. In other words, $(I^*_L)_L$ defines
a compatible system of complexes of sheaves on the sites
$(Et_{L/K})_L$.

By definition, the cohomology of the complex
$\zeta_{L,S,*}I^*_{L,S}$ in degree $n$ is the sheaf
$R^n(\zeta_{L,S,*})\mathcal{A}_{L,S}$ of the topos
$\widetilde{(Et_{L/K},\mathcal{J}_{et})}$. By exactness of filtered
inductive limits, the cohomology of the complex
$I^*_L:=\underrightarrow{lim}_{_{S}}\,\zeta_{L,S,*}I^*_{L,S}$ in
degree $n$ is the sheaf
$\underrightarrow{lim}_{_{S}}\,R^n(\zeta_{L,S,*})\mathcal{A}_{L,S}$.
We denote this sheaf by $\mathcal{A}_L^{(n)}$. Then we have
\begin{equation}\label{cohomology-sheaf-of-IL}
H^n(I^*_L)=\underrightarrow{lim}_{_{S}}\,H^n(\zeta_{L,S,*}I^*_{L,S})=\underrightarrow{lim}_{_{S}}\,R^n(\zeta_{L,S,*})\mathcal{A}_{L,S}
=:\mathcal{A}^{(n)}_{L}.
\end{equation}
Since the inverse image functor $u_L^*$ of the morphism
$u_L:\bar{Y}_{et}\rightarrow\widetilde{(Et_{L/K},\mathcal{J}_{et})}$
is exact, the cohomology of the complex $u_L^*(I^*_L)$ is given by
the sheaves
$$H^n(u_L^*(I^*_L))=
u_L^*H^n(I^*_L)=u_L^*\mathcal{A}^{(n)}_{L}.$$ Passing to the limit
over $L/K$, we define the complex
$$R\mathcal{A}:=\underrightarrow{lim}_{_{L/K}}\,u_L^*(I_L^*).$$
The cohomology sheaf of this complex in degree $n$ is given by :
\begin{equation}\label{cohomology-sheaf-of-RA}
R^n\mathcal{A}:=H^n(R\mathcal{A})=\underrightarrow{lim}_{_{L/K}}\,H^n(u_L^*I_L^*)=\underrightarrow{lim}_{_{L/K}}\,u_L^*H^n(I_L^*)
=\underrightarrow{lim}_{_{L/K}}\,u_L^*\mathcal{A}^{(n)}_{L}.
\end{equation}

Consider now a fixed Galois extension $L/K$. For any $S$,  the Leray
spectral sequence associated to the composition
$$\mathfrak{F}_{_{L/K,S}}\longrightarrow\widetilde{(Et_{L/K},\mathcal{J}_{et})}\longrightarrow\underline{Set}.$$
yields an isomorphism
$$H^p(Et_{L/K},\zeta_{L,S,*}I^*_{L,S})\simeq
H^{p+q}(\mathfrak{F}_{_{L/K,S}},\mathcal{A}_{L,S}),$$ where the
first term is the hypercohomology of the complex
$\zeta_{L,S,*}I^*_{L,S}$ on the site $Et_{L/K}$.

The complexes $(\zeta_{L,S,*}I^*_{L,S})_{S}$ form an inductive
system whose colimit is $I^*_L$, when $S$ runs over the finite sets
of valuations of $K$ containing the archimedean ones and those which
ramify in $L$. Passing to the limit over $S$, we obtain an
isomorphism
\begin{equation}\label{compatible1}
H^n(Et_{L/K},I^*_{L})\simeq\underrightarrow{lim}_{_{S}}H^n(Et_{L/K},\zeta_{L,S,*}I^*_{L,S})\simeq
\underrightarrow{lim}_{_{S}}
H^{n}(\mathfrak{F}_{_{L/K,S}},\mathcal{A}_{L,S}).
\end{equation}
Here, the first isomorphism follows from Lemma
\ref{lem-cohomology-lim-complex} (taking $T_i$ to be constantly
$\widetilde{(Et_{L/K},\mathcal{J}_{et})}$).

We have shown above that the family of complexes $I_L$ forms a
compatible system, when $L/K$ runs over the set of finite Galois
sub-extensions of $\overline{K}/K$. Passing to the limit over $L/K$,
we obtain
\begin{equation}\label{compatible2}
H^n(\bar{Y}_{et},R\mathcal{A})=H^n(\bar{Y}_{et},\underrightarrow{lim}_{_{L}}u_L^*(I^*_{L}))\simeq
\underrightarrow{lim}_{_{L}}H^n(Et_{L/K},I^*_{L}).
\end{equation}
by Corolarry \ref{lem-cohomology-lim-complex-etale}. Therefore,
isomorphisms (\ref{compatible1}) and (\ref{compatible2}) yield
$$H^n(\bar{Y}_{et},R\mathcal{A})\simeq\underrightarrow{lim}_{_{L}}H^n(Et_{L/K},I^*_{L})
\simeq\underrightarrow{lim}_{_{L}}\underrightarrow{lim}_{_{S}}
H^{n}(\mathfrak{F}_{_{L/K,S}},\mathcal{A}_{L,S})\simeq\underrightarrow{H}^{n}(\mathfrak{F}_{_{L/K,S}},\mathcal{A}),$$
for any $n\geq0$.

Let us show now that the complex of étale sheaves $R\mathcal{A}$ is
well defined up to quasi-isomorphism. This complex has been defined
by an injective resolution $I_{\bullet}$ of $\mathcal{A}$ in
$\mathfrak{F}_{\bullet}$. Let $I^*_\bullet$ and $J^*_\bullet$ be two
injective resolutions of $\mathcal{A}$. Denote by
$R\mathcal{A}(I_{\bullet})$ and $R\mathcal{A}(J_{\bullet})$ the
étale complexes defined as above. There is a morphism
$$q_{\bullet}:I^*_\bullet\longrightarrow J^*_\bullet$$
of complexes of abelian objects in $\mathfrak{F}_{\bullet}$ well
defined up to homotopy. For any pair $(L,S)$, we have in particular
a morphism
$$q_{L,S}:I^*_{L,S}\longrightarrow J^*_{L,S}$$
over $Id_{\mathcal{A}_{L,S}}$. Applying $\zeta_{L,S,*}$, the
morphism $q_{L,S}$ induces a quasi-isomorphism
$$\zeta_{L,S,*}I^*_{L,S}\longrightarrow \zeta_{L,S,*}J^*_{L,S},$$
since
$$R^q(\zeta_{L,S,*})\mathcal{A}_{L,S}:=H^q(\zeta_{L,S,*}I^*_{L,S})\simeq
H^q(\zeta_{L,S,*}J^*_{L,S}).$$ If $L/K$ is fixed, the morphisms
$q_{L,S}$ induce a morphism of complexes
$$q_L:I^*_L:=\underrightarrow{lim}_{_{S}}\,\zeta_{L,S,*}I^*_{L,S}\longrightarrow J^*_L:=\underrightarrow{lim}_{_{S}}\,\zeta_{L,S,*}J^*_{L,S}$$
which is a quasi-isomorphism by (\ref{cohomology-sheaf-of-IL}).
Passing to the limit over $L/K$, the morphisms $q_L$ induce a
morphism
$$R\mathcal{A}(I_{\bullet}):=\underrightarrow{lim}_{_{L/K}}\,u_L^*(I_L^*)\longrightarrow \underrightarrow{lim}_{_{L/K}}\,u_L^*(J_L^*)=:R\mathcal{A}(J_{\bullet})$$
This is a quasi-isomorphism by (\ref{cohomology-sheaf-of-RA}), hence
$R\mathcal{A}$ is well defined up to quasi-isomorphism.

\end{e-proof}

Let $\mathcal{A}=(\mathcal{A}_{L,S},f_t)$ be an abelian object of
$Top(\mathfrak{F}_\bullet)$ and let $\bar{U}$ be an étale
$\bar{Y}$-scheme. If $L/K$ is big enough so that the $G_K$-action on
the finite set $U_{v_0}$ factors through $G(L/K)$, then $\bar{U}$
defines an object $\zeta^*(\bar{U})$ of $\mathfrak{F}_{_{L/K,S}}$.
We consider the cohomology groups
$$H^*(\mathfrak{F}_{_{L/K,S}},\bar{U},\mathcal{A}_{L,S}):=H^*(\mathfrak{F}_{_{L/K,S}}{/_{\zeta^*(\bar{U})}},\mathcal{A}_{L,S}\times\zeta^*(\bar{U}))$$
of the slice topos $\mathfrak{F}_{_{L/K,S}}{/_{\zeta^*(\bar{U})}}$.
For any étale $\bar{Y}$-scheme $\bar{U}$, the direct limit
$$\underrightarrow{H}^*(\mathfrak{F}_{_{L/K,S}},\bar{U},\mathcal{A}):=\underrightarrow{lim}_{_{L,S}}H^*(\mathfrak{F}_{_{L/K,S}},\bar{U},\mathcal{A}_{L,S})$$
is well defined. It follows from Proposition
\ref{localistion-relative-flask-topos} that the computation of these
cohomology groups can be reduced to the case of an open sub-scheme
$\bar{U}$ of $\bar{Y}$, as defined in Notation
\ref{Not-Licht-cohomology-open}. Therefore, one can apply
Proposition \ref{Licht-cohomology-open-Z} and Proposition
\ref{Lichtenbaum-cohomology-open-R} to obtain explicit computations.

\begin{prop}\label{lem-RqZ-associe}
The sheaf $R^q\mathcal{A}$ is the sheaf associated to the presheaf
$$
\fonc{\mathcal{P}^q\mathcal{A}}{Et_{\bar{Y}}}{\underline{Ab}}{\bar{U}}
{\underrightarrow{H}^{q}(\mathfrak{F}_{_{L/K,S}},\bar{U},\mathcal{A})}
.$$
\end{prop}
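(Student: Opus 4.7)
\begin{e-proof}[Proof proposal]
The plan is to unwind the definition of $R\mathcal{A}$ given in the preceding theorem and apply, at each step, the fact that the higher direct image of an abelian object can be computed as the sheafification of the naive cohomology presheaf. Recall that, by construction,
$$R^q\mathcal{A}=\underrightarrow{lim}_{_{L/K}}\,u_L^*\,\mathcal{A}^{(q)}_{L},\qquad \mathcal{A}^{(q)}_{L}=\underrightarrow{lim}_{_{S}}\,R^q(\zeta_{L,S,*})\mathcal{A}_{L,S}.$$
Each of the three building blocks $R^q(\zeta_{L,S,*})$, the direct limit over $S$, and then over $L$, interacts well with sheafification, so the presheaf description should propagate through all three stages.

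First I would treat a fixed pair $(L,S)$. By the standard description of higher direct images for a morphism of topoi (\cite{SGA4} V.5.1), the sheaf $R^q(\zeta_{L,S,*})\mathcal{A}_{L,S}$ on $\widetilde{(Et_{L/K},\mathcal{J}_{et})}$ is the sheaf associated to the presheaf
$$\bar{U}\longmapsto H^q\bigl(\mathfrak{F}_{_{L/K,S}}{/_{\zeta_{L,S}^*(\bar{U})}},\,\mathcal{A}_{L,S}\times\zeta_{L,S}^*(\bar{U})\bigr)=H^q(\mathfrak{F}_{_{L/K,S}},\bar{U},\mathcal{A}_{L,S})$$
for $\bar{U}\in Et_{L/K}$.

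Next I would pass to the limit over $S$. Since sheafification commutes with filtered inductive limits (both construct the associated sheaf by the same "plus" procedure, and filtered colimits of sheaves are computed as the sheafification of the colimit presheaf), one obtains
$$\mathcal{A}^{(q)}_{L}=a\bigl(\bar{U}\mapsto\underrightarrow{lim}_{_{S}}\,H^q(\mathfrak{F}_{_{L/K,S}},\bar{U},\mathcal{A}_{L,S})\bigr),$$
the sheafification now being taken on $Et_{L/K}$. Applying the exact inverse image functor $u_L^*:\widetilde{(Et_{L/K},\mathcal{J}_{et})}\to\bar{Y}_{et}$ and using that $u_L^*$ commutes with sheafification, this identifies $u_L^*\mathcal{A}^{(q)}_{L}$ with the sheafification on $\bar{Y}_{et}$ of the presheaf obtained by restricting $\bar{U}\in Et_{\bar{Y}}$ to those $\bar{U}$ lying in $Et_{L/K}$; equivalently, by the description of $u_L^*$ when the underlying site morphism is a full continuous inclusion, it is the sheaf associated to the presheaf $\bar{U}\mapsto\underrightarrow{lim}_{_{S}}H^q(\mathfrak{F}_{_{L/K,S}},\bar{U},\mathcal{A}_{L,S})$ for $\bar{U}\in Et_{\bar{Y}}$ with $\bar{U}\in Et_{L/K}$ (any $\bar{U}$ lies in $Et_{L/K}$ once $L$ is large enough).

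Finally I would take the inductive limit over $L/K$. A second application of the commutation of sheafification with filtered inductive limits yields
$$R^q\mathcal{A}=a\Bigl(\bar{U}\mapsto\underrightarrow{lim}_{_{L/K}}\,\underrightarrow{lim}_{_{S}}\,H^q(\mathfrak{F}_{_{L/K,S}},\bar{U},\mathcal{A}_{L,S})\Bigr)=a(\mathcal{P}^q\mathcal{A}),$$
which is the desired identification. The main delicate point that I would have to verify carefully is the second step: namely, that taking the associated sheaf on $Et_{L/K}$ and then pulling back along $u_L^*$ yields the associated sheaf on $Et_{\bar{Y}}$ of the "same" presheaf, restricted along the inclusion $Et_{L/K}\hookrightarrow Et_{\bar{Y}}$. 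This is where one has to use that the inclusion is fully faithful, continuous and cocontinuous and that the topology on $Et_{L/K}$ is induced from the \'etale topology, so that Lemma (\cite{SGA4} III.2.5) applies and the presheaf pullback along a cocontinuous inclusion computes the sheaf pullback up to sheafification. Everything else is routine manipulation of inductive limits and slice topoi, using Proposition \ref{localistion-relative-flask-topos} to make sense of the cohomology of the slice.
\end{e-proof}
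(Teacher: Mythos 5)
Your three-stage reduction --- sheafify at fixed $(L,S)$, commute the associated-sheaf functor with the colimit over $S$, then with the colimit over $L$ --- has the same skeleton as the paper's proof, and you correctly single out the commutation with $u_L^*$ as the only nontrivial point. But the route you propose through it does not work: the inclusion $Et_{L/K}\hookrightarrow Et_{\bar{Y}}$ is continuous (this is built into the definition of the induced topology), but it is \emph{not} cocontinuous. Already for $L=K$: a non-trivial finite extension $M/K$ gives an \'etale cover $\overline{Spec\,\mathcal{O}_M}\to\bar{Y}$ in $Et_{\bar{Y}}$, and no non-empty object of $Et_{K/K}$ (whose generic fibre is a union of copies of $Spec(K)$) maps to $\overline{Spec\,\mathcal{O}_M}$ over $\bar{Y}$, so this cover has no refinement by $Et_{K/K}$-objects. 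Fortunately the identity $u_L^*\circ a_L\simeq a\circ u_L^p$ that you actually need holds without cocontinuity: since the site functor is the inclusion, the direct image $u_{L,*}$ of the induced topos morphism is literally presheaf restriction, so the square relating sheaf and presheaf topoi commutes on direct images by inspection, and the commutation on inverse images follows by adjunction. This is how the paper argues.

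A second gap comes after that step. Having $u_L^*\mathcal{A}^{(q)}_L\simeq a(u_L^p\,\mathcal{P}^q_L)$, you still have to identify $\underrightarrow{lim}_L\,u_L^p(\mathcal{P}^q_L)$ with $\mathcal{P}^q\mathcal{A}$. The presheaf pullback $u_L^p$ is the left Kan extension along the inclusion, so $(u_L^p\mathcal{P}^q_L)(\bar{V})$ is a colimit over arrows $\bar{V}\to\bar{U}$ with $\bar{U}\in Et_{L/K}$, not a ``restriction''; the remark that every $\bar{V}$ eventually lies in $Et_{L/K}$ is the right intuition, but you must actually observe that once $\bar{V}\in Et_{L/K}$ this colimit collapses to $\mathcal{P}^q_L(\bar{V})$, which holds because the inclusion is fully faithful (Kan-extending and then restricting recovers the original presheaf). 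The paper spells this computation out; as written, your final ``second application of the commutation of sheafification with filtered colimits'' does not by itself produce $\mathcal{P}^q\mathcal{A}$.
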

\begin{proof}
The sheaf $R^q(\zeta_{L,S,*})\mathcal{A}_{L,S}$ is the sheaf
associated to the presheaf
$$
\fonc{\mathcal{P}_{L,S}^q}{Et_{L/K}}{\underline{Ab}}{\bar{U}}
{H^{q}(\mathfrak{F}_{_{L/K,S}},\bar{U},\mathcal{A}_{L,S})}.$$ It
follows that the sheaf
$$\mathcal{A}^{(q)}_L:=\underrightarrow{lim}_{_{S}}\,R^q(\zeta_{L,S,*})\mathcal{A}_{L,S}$$
is the sheaf associated to the presheaf
$$
\fonc{{\mathcal{P}_{L}^q:=\underrightarrow{lim}_{_{S}}\mathcal{P}_{L,S}^q}}{Et_{L/K}}{\underline{Ab}}{\bar{U}}
{\underrightarrow{lim}_{_{S}}\,H^{q}(\mathfrak{F}_{_{L/K,S}},\bar{U},\mathcal{A}_{L,S})}.$$
Indeed, the associated sheaf functor commutes with inductive limit,
since it is the inverse image of a morphism of topoi. The morphism
of left exact sites
$$u^*_L:(Et_{L/K},\mathcal{J}_{et})\longrightarrow(Et_{\bar{Y}},\mathcal{J}_{et}).$$
induces the following commutative diagram of topoi:
\[ \xymatrix{
\widetilde{(Et_{\bar{Y}},\mathcal{J}_{et})}\ar[r]^{(a,i)}\ar[d]_{u_L}& \widehat{Et}_{\bar{Y}}\ar[d]^{(u_L^p,u_{L,p})}\\
\widetilde{(Et_{L/K},\mathcal{J}_{et})}\ar[r]^{(a_L,i_L)}&\widehat{Et}_{L/K}
}\] where $\widehat{Et}_{\bar{Y}}$ (respectively
$\widehat{Et}_{L/K}$) denotes the category of presheaves on
${Et}_{\bar{Y}}$ (respectively on ${Et}_{L/K}$). To check the
commutativity of this diagram, we observe the direct images of these
morphisms, for which the commutativity is obvious. Therefore we have
$$a\circ u_L^p\simeq u_L^*\circ a_L,$$
where $a$ and $a_L$ are the associated sheaf functors. We obtain
$$u_L^*\mathcal{A}^{(q)}_L:=u_L^*\circ a_L(\mathcal{P}^q_L)=a\circ u_L^p(\mathcal{P}^q_L),$$
and finally
$$R^q\mathcal{A}:=\underrightarrow{lim}_{_{L}}\,u_L^*\mathcal{A}^{(q)}_L=\underrightarrow{lim}_{_{L}}\,a\circ u_L^p(\mathcal{P}^q_L)
=a(\underrightarrow{lim}_{_{L}} u_L^p(\mathcal{P}^q_L)),$$ where the
last identification comes from the fact that $a$ commutes with
inductive limits. In other words, $R^q\mathcal{A}$ is the étale
sheaf on $\bar{Y}$ associated to the presheaf
$\underrightarrow{lim}_{_{L}} u_L^p(\mathcal{P}^q_L).$ This presheaf
can be made explicit as follows. For any connected étale
$\bar{Y}$-scheme $\bar{V}$, one has
$$[\underrightarrow{lim}_{_{L}} u_L^p(\mathcal{P}^q_L)](\bar{V})=
\underrightarrow{lim}_{_{L}} [u_L^p(\mathcal{P}^q_L)(\bar{V})]
=\underrightarrow{lim}_{_{L}}
[\underrightarrow{lim}_{_{\bar{V}\rightarrow\bar{U}}}\,\,\mathcal{P}^q_L(\bar{V})],$$
where the second limit is taken over the category of arrows
$\bar{V}\rightarrow\bar{U}$, for $\bar{U}$ running through the class
of objects of $Et_{L/K}$. The first identification can be justified
by saying that the limits of presheaves are computed component-wise
(i.e. "arguments par arguments").

If $L/K$ is big enough so that $\bar{V}$ is an object of $Et_{L/K}$,
then one has
$$\underrightarrow{lim}_{_{\bar{V}\rightarrow\bar{U}}}\,\,\mathcal{P}^q_L(\bar{U})=\mathcal{P}^q_L(\bar{V})$$
since $Id_{\bar{V}}$ is then the initial object of the category of
arrows $\bar{V}\rightarrow\bar{U}$, for $\bar{U}$ in $Et_{L/K}$. We
obtain the following identifications
$$[\underrightarrow{lim}_{_{L}} u_L^p(\mathcal{P}^q_L)](\bar{V})=\underrightarrow{lim}_{_{L}}
[\underrightarrow{lim}_{_{\bar{V}\rightarrow\bar{U}}}\,\,\mathcal{P}^q_L(\bar{U})]
=\underrightarrow{lim}_{_{L}}\mathcal{P}^q_L(\bar{V})
=\underrightarrow{lim}_{_{L}}\,\underrightarrow{lim}_{_{S}}\,H^{q}(\mathfrak{F}_{_{L/K,S}},\bar{V},\mathcal{A}_{L/K,S}).
$$
Therefore, $R^q\mathcal{A}$ is the étale sheaf on $\bar{Y}$
associated to the presheaf
$$
\fonc{\mathcal{P}^q\mathcal{A}}{Et_{\bar{Y}}}{\underline{Ab}}{\bar{V}}
{\underrightarrow{lim}_{_{L,S}}\,H^{q}(\mathfrak{F}_{_{L/K,S}},\bar{V},\mathcal{A}_{L/K,S})=:
\underrightarrow{H}^{q}(\mathfrak{F}_{_{L/K,S}},\bar{V},\mathcal{A})
}.$$
\end{proof}

We consider below the sheaves $\mathbb{Z}$ and
$\widetilde{\mathbb{R}}$ of the total topos
$Top\,(\mathfrak{F}_\bullet)$ defined in Example
\ref{Licht-sheaf-comes-from-flask-topos}, and the sheaves
$\phi_!\mathbb{Z}$ and $\phi_!\widetilde{\mathbb{R}}$ defined in
section \ref{subsect-cpct-support}. Finally, we consider the étale
sheaves $\varphi_!\mathbb{Z}$ and $\varphi_!\mathbb{R}$ defined via
the open inclusion $\varphi:Y_{et}\rightarrow\bar{Y}_{et}$, where
$\mathbb{R}$ denotes here the constant sheaf on $Y_{et}$ associated
to the discrete abelian group $\mathbb{R}$. We assume below that $K$
is totally imaginary.
\begin{e-cor}\label{computations-Ri}
One has the following results :
\begin{enumerate}
\item $R^0(\mathbb{Z})=\mathbb{Z}$ and $R^q(\mathbb{Z})=0$ for $q\geq1$
odd.
\item $R^q(\widetilde{\mathbb{R}})=\mathbb{R}$ for $q=0,1$ and
$R^q(\widetilde{\mathbb{R}})=0$ for $q\geq2$.
\item $R^0(\phi_!\mathbb{Z})=\varphi_!\mathbb{Z}$ and
$R^q(\phi_!\mathbb{Z})=R^q\mathbb{Z}$ for $q\geq1$.
\item $R^q(\phi_!\widetilde{\mathbb{R}})=\varphi_!\mathbb{R}$ for $q=0,1$
and $R^q(\phi_!\widetilde{\mathbb{R}})=0$ for $q\geq2$.
\end{enumerate}

\end{e-cor}
\begin{proof}
Using Proposition \ref{lem-RqZ-associe} (and Proposition
\ref{localistion-relative-flask-topos}), this follows from
propositions \ref{Licht-cohomology-open-Z} and
\ref{Lichtenbaum-cohomology-open-R}, and equations
(\ref{exact-seq-coh-support-flask-Z}) and
(\ref{exact-seq-coh-support-flask-R}).

\end{proof}

\section{Etale complexes for the Weil-étale cohomology}

In this section, we assume that the number field $K$ is totally imaginary. In order to obtain the
relevant Weil-étale cohomology (i.e. the vanshing of the cohomology
in degrees $i\geq4$), we need to truncate the complex $R\mathbb{Z}$.
However, a non-trivial preliminary condition has to be satisfied.
Namely the sheaf $R^2\mathbb{Z}$, which fills the gap between Weil-étale and étale
cohomology, should be acyclic for the global
sections functor on $\bar{Y}$. We study below this sheaf and we show that it
has the right cohomology using an indirect argument. Then we define complexes of étale sheaves computing
the conjectural Weil-étale cohomology.

\subsection{Cohomology of the sheaf $R^2\mathbb{Z}$}
By Proposition \ref{Licht-cohomology-open-Z} and Proposition
\ref{lem-RqZ-associe}, the étale sheaf $R^2\mathbb{Z}$ is the sheaf
associated to the presheaf
$$
\fonc{\mathcal{P}^2\mathbb{Z}}{Et_{\bar{Y}}}{\underline{Ab}}{\bar{V}}
{\underrightarrow{H}^{2}(\mathfrak{F}_{_{L/K,S}},\bar{V},\mathbb{Z})=
(C^1_{\bar{V}})^{\mathcal{D}} }.$$ The compact group $C^1_{\bar{V}}$
is the kernel of the map $C_{\bar{V}}\rightarrow\mathbb{R}$, where
$C_{\bar{V}}$ is defined in \ref{defn-class-group-barU}. Recall that
if $\bar{V}$ is connected of function field $K(\bar{V})$, then
$C_{\bar{V}}$ is the $S$-idèle class group of $K(\bar{V})$, where
$S$ is the set of places of $K(\bar{V})$ not corresponding to a
point of $\bar{V}$. Note that such a finite set $S$ does not
necessarily contain all the archimedean places. The restrictions maps
of the presheaf $\mathcal{P}^2\mathbb{Z}$ are induced by the
canonical maps $C_{\bar{U}}\rightarrow C_{\bar{V}}$ (well defined
for any étale map $\bar{U}\rightarrow \bar{V}$ of connected étale
$\bar{Y}$-schemes). By unramified class field theory, one has a
covariantly functorial exact sequence of compact topological groups
$$0\rightarrow D^1_{\bar{V}}\rightarrow C^1_{\bar{V}}\rightarrow\pi_1^{ab}(\bar{V})\rightarrow0$$
where $\pi_1^{ab}(\bar{V})$ is the abelian étale fundamental group
of $\bar{V}$ and $D^1_{\bar{V}}$ is the connected component of $1$
in $C^1_{\bar{V}}$. Here $\pi_1^{ab}(\bar{V})$ is defined as the
abelianization of the profinite fundamental group of the
Artin-Verdier étale topos
$\bar{Y}_{et}/_{\bar{V}}\simeq\bar{V}_{et}$. If we denote the
function field of $\bar{V}$ by $K(\bar{V})$ then this group is just
the Galois group of the maximal abelian extension of $K(\bar{V})$
unramified at every place of $K(\bar{V})$ corresponding to a point
of $\bar{V}$.

By Pontryagin duality, we obtain a (contravariantly) functorial
exact sequence of discrete abelian groups
\begin{equation}\label{exact-sequence-presheaves-R2Z}
0\rightarrow \pi_1^{ab}(\bar{V})^{\mathcal{D}}\rightarrow
(C^1_{\bar{V}})^{\mathcal{D}}\rightarrow
(D^1_{\bar{V}})^{\mathcal{D}}\rightarrow0,
\end{equation}
i.e. an exact sequence of abelian étale presheaves on $\bar{Y}$. On
the one hand, the sheaf associated to the presheaf
$$
\appl{Et_{\bar{Y}}}{\underline{Ab}}{\bar{V}}
{\pi_1^{ab}(\bar{V})^{\mathcal{D}}=H^2(\bar{V}_{et},\mathbb{Z})}$$
vanishes and the associated sheaf functor is exact on the other.
Therefore, the exact sequence (\ref{exact-sequence-presheaves-R2Z})
shows that $R^2\mathbb{Z}$ is the sheaf associated to the presheaf
$$
\fonc{P}{Et_{\bar{Y}}}{\underline{Ab}}{\bar{V}}
{(D^1_{\bar{V}})^{\mathcal{D}} }.$$
The connected component $D^1_{\bar{V}}$ of the S-id\`ele class group $C^1_{\bar{V}}$ is not known in general, as pointed out to me by Alexander Schmidt. The computation of the sheaf $R^2\mathbb{Z}$ is a delicate problem. We shall compute the cohomology of $R^2\mathbb{Z}$ using an indirect argument.

\begin{e-lem}\label{unptitlem}
The canonical morphism
$$\underrightarrow{H}^4(\mathfrak{F}_{_{L/K,S}},\mathbb{Z})\longrightarrow H^0(\bar{Y}_{et},R^4\mathbb{Z}):=R^4\mathbb{Z}(\bar{Y})$$
is an isomorphism.
\end{e-lem}
\begin{proof}
The canonical morphism $\underrightarrow{H}^4(\mathfrak{F}_{_{L/K,S}},\mathbb{Z})\rightarrow H^0(\bar{Y}_{et},R^4\mathbb{Z})$ is induced by the morphism of presheaves $\mathcal{P}^4\mathbb{Z}\rightarrow R^4\mathbb{Z}$ (see Proposition \ref{lem-RqZ-associe}). Let $$J:B_{W_{K}}\longrightarrow
B^{sm}_{G_{K}}\longrightarrow\bar{Y}_{et}$$ be the morphism induced by the continuous morphism $W_K\rightarrow G_K$ and by the inclusion of the generic point of $\bar{Y}$. For any $n\geq 0$, the \'etale sheaf
$R^n(J_*)\mathbb{Z}$ is the sheaf associated to the presheaf
$$
\fonc{\mathcal{P}^n(J_*)\mathbb{Z}}{Et_{\bar{Y}}}{\underline{Ab}}{\overline{U}}
{H^n(W_{K(U)},\mathbb{Z})},$$ where $\overline{U}$ is assumed to be connected. Here
$W_{K(U)}$ is the Weil group of the number field $K(U)$. For any finite extension
$K'/K$, one has a surjective map
(see \cite{MatFlach} Proof of Corollary 9)
\begin{equation}\label{morphism-matthias}
H^4(W_{K'},\mathbb{Z})\longrightarrow\sum_{v\in
Y'_{\infty}}H^4(W_{K'_v},\mathbb{Z}) =\sum_{v\in
Y'_{\infty}}H^4(\mathbb{S}^1,\mathbb{Z})=\sum_{v\in
Y'_{\infty}}\mathbb{Z},
\end{equation}
where $Y'_{\infty}$ is the set of archimedean primes of $K'$.
We denote by
$$u:\coprod_{Y_{\infty}}{\underline{Set}}\longrightarrow\bar{Y}_{et}$$
the closed embedding of topoi given by the map $Y_{\infty}\rightarrow\bar{Y}$.
Then (\ref{morphism-matthias}) induces a surjective morphism of presheaves
$$\mathcal{P}^4(J_*)\mathbb{Z}\longrightarrow u_*\mathbb{Z}.$$
We obtain the following exact sequence of presheaves
$$0\longrightarrow \mathcal{P}^4\mathbb{Z}\longrightarrow\mathcal{P}^4(J_*)\mathbb{Z}\longrightarrow u_*\mathbb{Z}\longrightarrow0,$$
as it follows from Proposition \ref{lem-RqZ-associe} and Proposition \ref{Licht-cohomology-open-Z}. The associated sheaf functor is exact, hence we have an exact sequence of sheaves
$$0\longrightarrow R^4\mathbb{Z}\longrightarrow R^4(J_*)\mathbb{Z}\longrightarrow u_*\mathbb{Z}\longrightarrow0,$$
since $u_*\mathbb{Z}$ was already a sheaf. We get a long exact sequence
$$0\longrightarrow R^4\mathbb{Z}(\bar{Y})\longrightarrow R^4(J_*)\mathbb{Z}(\bar{Y})\longrightarrow\sum_{Y_{\infty}}\mathbb{Z}\longrightarrow...$$
Moreover, there is a morphism of exact sequences
\[ \xymatrix{
0\ar[r]&\underrightarrow{H}^4(\mathfrak{F}_{_{L/K,S}},\mathbb{Z})\ar[d]\ar[r]^{}&H^4(W_K,\mathbb{Z})\ar[d]_{\iota}\ar[r]^{}&\sum_{Y_{\infty}}\mathbb{Z} \ar[d]^{Id}\\
0\ar[r]&R^4\mathbb{Z}(\bar{Y})\ar[r]^{}&R^4(J_*)\mathbb{Z}(\bar{Y})\ar[r]^{}&\sum_{Y_{\infty}}\mathbb{Z}
}\]
where the vertical maps are the natural ones. Lemma \ref{lem-R4J} shows that $\iota$ is an isomorphism, and the result follows.
\end{proof}

We denote by $W_K^1$ the maximal compact sub-group of the Weil group
$W_K$. There is a canonical isomorphism of topological groups $W_K\simeq W_K^1\times\mathbb{R}$. We denote by $\mathcal{T}^{lc}$, $B^{lc}_{W_K}$ and $B^{lc}_{W_K^1}$ the topoi obtained by replacing the category of topological spaces $Top$ with the category $Top^{lc}$ of locally compact topological spaces with countable basis. In the following two lemmas, we consider the composite morphism
$$\alpha:B_{W_K}\xrightarrow{\,h\,}B^{lc}_{W_K}\xrightarrow{\alpha^{lc}}B^{lc}_{W_K^1}$$
where $\alpha^{lc}$ is the morphism of classifying topoi induced by the projection $W_K\rightarrow W_K^1$.

\begin{e-lem}\label{lem-alpha-acyclic}
For any $n\geq1$, one has $R^n(\alpha_*)\mathbb{Z}=0$.
\end{e-lem}
\begin{proof}
By \cite{MatFlach} Prop. 9.1, the direct image $h_*$ of the morphism $B_{W_K}\xrightarrow{\,h\,}B^{lc}_{W_K}$ is exact. The Leray spectral sequence associated to the composite morphism $\alpha$ gives
$$R^n(\alpha_*)\mathbb{Z}\simeq R^n(\alpha^{lc}_*)h_*\mathbb{Z}=R^n(\alpha^{lc}_*)\mathbb{Z}$$
It is therefore enough to show that $R^n(\alpha^{lc}_*)\mathbb{Z}=0$ for any $n\geq1$. We consider the pull-back
\[ \xymatrix{
B^{lc}_{\mathbb{R}}\ar[r]^{e_{\mathbb{R}}}\ar[d]_{f}&\mathcal{T}^{lc}\ar[d]^{l}\\
B^{lc}_{W_K}\ar[r]^{\alpha^{lc}}&B^{lc}_{W^1_K}
 }\]
where the vertical arrows are the localization morphisms (one has for example $B^{lc}_{W^1_K}/_{E_{{W^1_K}}}\simeq\mathcal{T}^{lc}$).
This pull-back square induces an isomorphism
$$l^*R^n(\alpha^{lc}_*)\simeq R^n(e_{\mathbb{R}*})f^*.$$ One the other hand, the object of $\mathcal{T}^{lc}$
$$R^n(e_{\mathbb{R}*})f^*\mathbb{Z}=R^n(e_{\mathbb{R}*})\mathbb{Z}$$
is represented by the discrete abelian group
$H^n(B^{lc}_{\mathbb{R}},\mathbb{Z})$ (see \cite{MatFlach} Prop. 9.2). This group is trivial for any $n\geq1$, and we obtain $l^*R^n(\alpha^{lc}_*)\mathbb{Z}=0$. But $l^*$ is faithful hence $R^n(\alpha^{lc}_*)\mathbb{Z}=0$ for any $n\geq1$. The result follows.
\end{proof}

\begin{e-lem}\label{lem-R4J}
The canonical map
$$H^4(W_K,\mathbb{Z})\longrightarrow R^4(J_*)\mathbb{Z}(\bar{Y}).$$
is an isomorphism.
\end{e-lem}
\begin{proof}
We decompose the morphism
$J:B_{W_{K}}\rightarrow\bar{Y}_{et}$ as follows:
$$J=\beta\circ\alpha:B_{W_K}\longrightarrow
B^{lc}_{W_K^1}\longrightarrow\bar{Y}_{et}.$$ The Leray spectral sequence associated to this composite map and the previous Lemma show that the natural morphism of \'etale sheaves
$$R^n(\beta_*)\mathbb{Z}=R^n(\beta_*)(\alpha_*\mathbb{Z})\longrightarrow R^n(J_*)\mathbb{Z}$$
is an isomorphism. It is therefore enough to show that the natural map
\begin{equation}\label{iso-necessary}
H^4(W_K,\mathbb{Z})\simeq H^4(W^1_K,\mathbb{Z})\longrightarrow R^4(\beta_*)\mathbb{Z}(\bar{Y})
\end{equation}
is an isomorphism (where $H^4(W_K,\mathbb{Z})\simeq H^4(W^1_K,\mathbb{Z})$ follows from Lemma \ref{lem-alpha-acyclic}). To this aim, we decompose the morphism $\beta$ as follows:
$$\beta=\mathrm{j}\circ p:B^{lc}_{W^1_{K}}\longrightarrow
B^{sm}_{G_K}\longrightarrow\bar{Y}_{et}.$$
This provides us with the leray spectral sequence
$$R^i(\mathrm{j}_*)\circ R^j(p_*)\mathbb{Z}\Longrightarrow R^{i+j}(\beta_*)\mathbb{Z}.$$
We denote by $M^j$ the $G_K$-module $R^j(p_*)\mathbb{Z}$. By \cite{MatFlach} equation (21) and \cite{MatFlach} Lemma 11, one has
$$M^j=0\mbox{ for $j$ odd}.$$
For any $G_K$-module $M$, the \'etale sheaf
$R^i(\mathrm{j}_*)M$ is the sheaf associated to the presheaf
$$\overline{U}\mapsto H^i(B^{sm}_{G_K},U_{v_0},M)=H^i(G_{K(U)},M).$$
It is well known that a totally imaginary number field is of strict cohomological dimension 2. Hence we have $R^i(\mathrm{j}_*)M=0$ for $i\geq3$. The proof of (\cite{MatFlach} Lemma 12 (b)) shows that
$H^i(G_{K(U)},M^2)=0$ for any $i\geq 1$.
Hence the group
$$R^i(\mathrm{j}_*)\circ R^j(p_*)\mathbb{Z}=R^i(\mathrm{j}_*)M^j$$
is trivial for $i\geq3$, or if the index $j$ is odd, or
for $(j=2,i\geq1)$. The initial term of the spectral sequence
$$R^i(\mathrm{j}_*)M^j:=R^i(\mathrm{j}_*)\circ R^j(p_*)\mathbb{Z}\Longrightarrow
R^{i+j}(\beta_*)\mathbb{Z}$$ therefore looks as follows:
{\small{
$$
\begin{array}{cccccc}
0&0&0&0&0&0  \\
\mathrm{j}_*M^4&R^1\mathrm{j}_*M^4&R^2\mathrm{j}_*M^4&0 &0 &0      \\
0&0&0&0&0&0  \\
\mathrm{j}_*M^2&0&0&0 &0 &0      \\
0&0&0&0&0&0  \\
\mathrm{j}_*M^0&R^1\mathrm{j}_*M^0&R^2\mathrm{j}_*M^0&0 &0 &0  \\
\end{array}
$$
}}This yields a natural isomorphism
$$R^4(\beta_*)\mathbb{Z}\simeq\mathrm{j}_*M^4,$$
and we obtain the following identifications
\begin{equation}\label{iso-meles}
R^4(\beta_*)\mathbb{Z}(\bar{Y})\simeq\mathrm{j}_*M^4(\bar{Y})=H^0(G_K,M^4)\simeq H^4(W^1_K,\mathbb{Z}).
\end{equation}
Indeed, the last isomorphism in (\ref{iso-meles}) is given by the spectral sequence
$$H^i(G_K,M^j)\Longrightarrow H^{i+j}(W_K^1,\mathbb{Z})$$
which is made explicit in (\cite{MatFlach} Lemma 12).
Note that the isomorphisms in (\ref{iso-meles}) are given by the natural maps
$$H^4(W^1_K,\mathbb{Z})\longrightarrow R^4(\beta_*)\mathbb{Z}(\bar{Y})\longrightarrow H^0(G_K,M^4).$$
Hence (\ref{iso-necessary}) is an isomorphism and the result follows.
\end{proof}

Recall that $A^{\mathcal{D}}:=Hom_c(A,\mathbb{R}/\mathbb{Z})$
denotes the Pontryagin dual of a locally compact abelian group $A$. If $A$ is a discrete abelian group, we set $A^{D}:=Hom(A,\mathbb{Q}/\mathbb{Z})$.

\begin{e-thm}\label{prop-R2-acyclique} The \'etale sheaf $R^2\mathbb{Z}$ is acyclic for the global section functor $\Gamma_{\bar{Y}}$. In other words, one has $H^n(\bar{Y}_{et},R^2\mathbb{Z})=0$ for any $n\geq1$.
\end{e-thm}
\begin{proof}
By Corollary \ref{computations-Ri} (1), the initial term of the spectral sequence
\begin{equation}\label{one-spectral-sequence}
H^p(\bar{Y},R^q\mathbb{Z})\Longrightarrow\underrightarrow{H}^{p+q}(\mathfrak{F}_{L/K,S},\mathbb{Z})
\end{equation}
looks as follows: {\small{
$$
\begin{array}{cccccc}
0&0&0&0&\,\,\,\,\,0\,\,\,\,\,\,\,\,\,\, &0 \\[0.5cm]
H^0(\bar{Y},R^4\mathbb{Z})&H^1(\bar{Y},R^4\mathbb{Z})  &H^2(\bar{Y},R^4\mathbb{Z}) &H^3(\bar{Y},R^4\mathbb{Z})  &\,\,\,\,\,0\,\,\,\,\,\,\,\,\,\, &0 \\[0.5cm]
0&0&0&0&\,\,\,\,\,0\,\,\,\,\,\,\,\,\,\, &0 \\[0.5cm]
H^0(\bar{Y},R^2\mathbb{Z})&H^1(\bar{Y},R^2\mathbb{Z})  &H^2(\bar{Y},R^2\mathbb{Z}) &H^3(\bar{Y},R^2\mathbb{Z})  &\,\,\,\,\,0\,\,\,\,\, \,\,\,\,\,&0 \\[0.5cm]
0&0&0&0&\,\,\,\,\,0\,\,\,\,\,\,\,\,\,\, &0 \\[0.5cm]
\mathbb{Z}&0&Pic(Y)^D&U_K^D &\,\,\,\,\,0\,\,\,\,\,\,\,\,\,\, &0  \\[0.5cm]
\end{array}
$$
}}
We obtain the exact sequence
\begin{equation}\label{unebonnesuiteexacte}
0\rightarrow Pic(Y)^D\rightarrow Pic^1(\bar{Y})^{\mathcal{D}}\rightarrow H^0(\bar{Y}_{et},R^2\mathbb{Z})
\rightarrow U_K^D\rightarrow\mu_K^D\rightarrow
H^1(\bar{Y}_{et},R^2\mathbb{Z})\rightarrow0.
\end{equation}
The group $H^1(\bar{Y}_{et},R^2\mathbb{Z})$ is trivial since the canonical map
$U_K^D\rightarrow\mu_K^D$ is surjective. Then, this spectral sequence gives the exact sequence
$$0\rightarrow H^2(\bar{Y}_{et},R^2\mathbb{Z})\rightarrow\underrightarrow{H}^4(\mathfrak{F}_{L/K,S},\mathbb{Z})\rightarrow H^0(\bar{Y}_{et},R^4\mathbb{Z})\rightarrow
H^3(\bar{Y}_{et},R^2\mathbb{Z})\rightarrow\underrightarrow{H}^5(\mathfrak{F}_{L/K,S},\mathbb{Z})=0$$
where the central map is an isomorphism by Lemma \ref{unptitlem}. We get
$$H^2(\bar{Y}_{et},R^2\mathbb{Z})=H^3(\bar{Y}_{et},R^2\mathbb{Z})=0.$$
Finally, the group $H^n(\bar{Y}_{et},R^2\mathbb{Z})$ is trivial for any $n\geq 4$ since the \'etale site of $\bar{Y}$ is of strict cohomological dimension 3 (see \cite{Deninger}).
\end{proof}
In order to compute the group $H^0(\bar{Y}_{et},R^2\mathbb{Z})$, one needs to study the sheaf $R^2\mathbb{Z}$ in more detail. There is a canonical map
$$Pic^1(\bar{Y})^{\mathcal{D}}\longrightarrow Hom(U_K,\mathbb{Z})\longrightarrow Hom(U_K,\mathbb{Q}).$$
One can show that the morphism
$$Pic^1(\bar{Y})^{\mathcal{D}}\simeq\underrightarrow{H}^{2}(\mathfrak{F}_{L/K,S},\mathbb{Z})\longrightarrow H^0(\bar{Y}_{et},R^2\mathbb{Z})$$ factors through an injective map
\begin{equation}\label{iso-comparison}
c:Hom(U_K,\mathbb{Q})\longrightarrow H^0(\bar{Y}_{et},R^2\mathbb{Z}).
\end{equation}
Then one can show that this gives a morphism of exact sequences
\[ \xymatrix{
0\ar[r]&Pic(Y)^D\ar[d]_{Id}\ar[r]&Pic^1(\bar{Y})^{\mathcal{D}}\ar[r]\ar[d]_{\simeq}& Hom(U_K,\mathbb{Q})\ar[d]_c\ar[r]&U_K^D\ar[d]_{\simeq}\ar[r]&\mu_K^D \ar[d]_{\simeq}\ar[r]&0\\
0\ar[r]&Pic(Y)^D\ar[r]&\underrightarrow{H}^2(\mathfrak{F}_{L/K,S},\mathbb{Z})\ar[r]
&R^2\mathbb{Z}(\bar{Y})\ar[r]&U_K^D\ar[r] & \underrightarrow{H}^3(\mathfrak{F}_{L/K,S},\mathbb{Z})\ar[r]&0
}\]
where the bottom row is the exact sequence given by the spectral sequence (\ref{one-spectral-sequence}). It follows that (\ref{iso-comparison}) is an isomorphism. But this fact will not be used in the remaining part of this paper.

\subsection{The complexes}
\begin{e-thm}\label{thm-complexe-etale-Wetale}
There exists a complex $R_W\mathbb{Z}$ of étales sheaves on
$\bar{Y}_{et}$, well defined up to quasi-ismorphism, whose
hypercohomology is the expected Weil-étale cohomology :
\begin{eqnarray*}
H^n(\bar{Y}_{et};R_W\mathbb{Z})&=&\mathbb{Z} \mbox{ for }n=0,\\
                           &=&0 \mbox{ for }n=1,         \\
                           &=& Pic^1(\bar{Y})^{\mathcal{D}}\mbox{ for }n=2,\\
                           &=& \mu_K^D\mbox{ for }n=3,\\
                           &=& 0\mbox{ for } n\geq 4.
\end{eqnarray*}

\end{e-thm}
\begin{proof}
Consider the complex $R\mathbb{Z}$. The truncated complex
$$R_W\mathbb{Z}=\tau_{\leq2} R\mathbb{Z},$$
is also well defined up to quasi-isomorphism. One has
$H^n(\tau_{\leq2} R\mathbb{Z})=H^n(R\mathbb{Z})$ for $n\leq2$, and
$H^n(\tau_{\leq2} R\mathbb{Z})=0$ for $n\geq3$. By Corollary
\ref{computations-Ri} and Theorem \ref{prop-R2-acyclique} the
$E^2$-term of the spectral sequence
$$H^p(\bar{Y}_{et},H^q(R_W\mathbb{Z}))\Longrightarrow H^{p+q}(\bar{Y}_{et},R_W\mathbb{Z})$$
therefore looks like : {
$$
\begin{array}{cccccc}
0\,\,\,\,\,&0\,\,\,\,\,&0\,\,\,\,\,&0\,\,\,\,\,&\,\,\,\,\,0 \\[0.5cm]
H^0(\bar{Y}_{et},R^2\mathbb{Z})\,\,\,\,\,&0\,\,\,\,\,  &0\,\,\,\,\, &0\,\,\,\,\,  &\,\,\,\,\,0 \\[0.5cm]
0\,\,\,\,\,&0\,\,\,\,\,&0\,\,\,\,\,&0\,\,\,\,\,&\,\,\,\,\,0 \\[0.5cm]
\mathbb{Z}\,\,\,\,\,&0\,\,\,\,\,&Pic(Y)^D\,\,\,\,\,&U_K^D\,\,\,\,\, &\,\,\,\,\,0  \\[0.5cm]
\end{array}
$$
} We obtain immediately
$H^{0}(\bar{Y}_{et},R_W\mathbb{Z})=\mathbb{Z}$ and
$H^{1}(\bar{Y}_{et},R_W\mathbb{Z})=0$. Next the spectral sequence
yields the exact sequence
\begin{equation}\label{exact-sequence-RZ-labonne}
0\rightarrow Pic(Y)^D\rightarrow
H^{2}(\bar{Y}_{et},R_W\mathbb{Z})\rightarrow
H^0(\bar{Y}_{et},R^2\mathbb{Z})\rightarrow
Hom(U_K,\mathbb{Q}/\mathbb{Z})\rightarrow
H^{3}(\bar{Y}_{et},R_W\mathbb{Z})\rightarrow0.
\end{equation}
Note that it is already clear that $R_W\mathbb{Z}$ has the expected
hypercohomology. The morphism of complexes $R_W\mathbb{Z}\rightarrow
R\mathbb{Z}$ induces a morphism of spectral sequences from
$$H^p(\bar{Y}_{et},H^q(R_W\mathbb{Z}))\Longrightarrow H^{p+q}(\bar{Y}_{et},R_W\mathbb{Z})$$
to
$$H^p(\bar{Y}_{et},H^q(R\mathbb{Z}))\Longrightarrow\underrightarrow{H}^{p+q}(\mathfrak{F}_{_{L/K,S}},\mathbb{Z})$$
which in turn induces a morphism of exact sequences from
(\ref{exact-sequence-RZ-labonne}) to (\ref{unebonnesuiteexacte}). We obtain

$$H^{n}(\bar{Y}_{et},R_W\mathbb{Z})\simeq\underrightarrow{H}^{n}(\mathfrak{F}_{_{L/K,S}},\mathbb{Z})\mbox{\,\,for
$n=2,3$}.$$ The result for $n=2,3$ then follows from Theorem
\ref{thm-lichten-weil-etale-Zcoef}. Finally, the groups
$H^{n}(\bar{Y}_{et},R_W\mathbb{Z})$ vanish for $n\geq4$, since the
diagonals $\{p+q=n,\,\,n\geq4\}$ of this spectral sequence are
trivial.
\end{proof}

\begin{e-thm}\label{thm-complexe-etale-Wetale}
There exists a complex $R_W(\phi_!\mathbb{Z})$ of étales sheaves on
$\bar{Y}_{et}$, well defined up to quasi-ismorphism, whose
hypercohomology is the expected Weil-étale cohomology with compact
support :
\begin{eqnarray*}
H^n(\bar{Y}_{et};R_W(\phi_!\mathbb{Z}))&=&0 \mbox{ for }n=0,\\
                           &=&(\prod_{Y_{\infty}}\mathbb{Z})/\mathbb{Z} \mbox{ for }n=1,         \\
                           &=& Pic^1(\bar{Y})^{\mathcal{D}}\mbox{ for }n=2,\\
                           &=& \mu_K^D\mbox{ for }n=3,\\
                           &=& 0\mbox{ for } n\geq 4.
\end{eqnarray*}
\end{e-thm}
\begin{e-proof}
The morphism $\phi_!\mathbb{Z}\rightarrow\mathbb{Z}$ in
$Top\,(\mathfrak{F}_{\bullet})$ induces a morphism of étale
complexes $R\phi_!\mathbb{Z}\rightarrow R\mathbb{Z}$. We obtain a
morphism of truncated complexes
$$R_W(\phi_!\mathbb{Z}):=\tau_{\leq2}
R(\phi_!\mathbb{Z})\longrightarrow R_W(\mathbb{Z}):=\tau_{\leq2}
R\mathbb{Z}$$ inducing a morphism of spectral sequences. Using
Corollary \ref{computations-Ri}(3), we obtain
$$H^n(\bar{Y}_{et};R_W(\phi_!\mathbb{Z}))=H^n(\bar{Y}_{et};R_W(\mathbb{Z}))$$
for $n\geq2$. Finally, the spectral sequence
$$H^p(\bar{Y}_{et};H^q(R_W\phi_!\mathbb{Z}))\Longrightarrow H^{p+q}(\bar{Y}_{et};R_W(\phi_!\mathbb{Z}))$$
yields
$H^{n}(\bar{Y}_{et};R_W(\phi_!\mathbb{Z}))=H^{n}(\bar{Y}_{et};\varphi_!\mathbb{Z})$
for $n=0,1$. The result follows from (\ref{cohom-compact-etale-Z}).
\end{e-proof}

\begin{e-thm}\label{thm-complexe-etale-Wetale}
The hypercohomology of the complex of étale sheaves
$R(\phi_!\widetilde{\mathbb{R}})$ is given by
\begin{eqnarray*}
H^n(\bar{Y}_{et};R(\phi_!\widetilde{\mathbb{R}}))&=&0 \mbox{ for }n=0,\\
                           &=&(\prod_{Y_{\infty}}\mathbb{R})/\mathbb{R} \mbox{ for }n=1,2         \\
                           &=& 0\mbox{ for } n\geq 3.
\end{eqnarray*}
\end{e-thm}
\begin{e-proof}
The spectral sequence
$$H^p(\bar{Y}_{et};R^q(\phi_!\widetilde{\mathbb{R}}))\Longrightarrow H^{p+q}(\bar{Y}_{et};R(\phi_!\widetilde{\mathbb{R}}))$$
degenerates and yields
$$H^{n}(\bar{Y}_{et};R(\phi_!\widetilde{\mathbb{R}}))
=H^{1}(\bar{Y}_{et},R^{n-1}(\phi_!\widetilde{\mathbb{R}}))=H^{1}(\bar{Y}_{et},\varphi_!\mathbb{R})\mbox{
for $n=1,2$}$$ and
$H^{n}(\bar{Y}_{et};R(\phi_!\widetilde{\mathbb{R}}))=0$ for
$n\neq1,2$ (see Corollary \ref{computations-Ri}(4)). Hence the
result follows from (\ref{cohom-compact-etale-R}).
\end{e-proof}

\

$\mathbf{Acknowledgments}.$ The author is very grateful to Matthias
Flach for his questions, comments and suggestions which led to a more complete and useful article.

\end{document}